\DeclareMathOperator{\de}{d}
\DeclareMathOperator{\dom}{dom}
\DeclareMathOperator{\Sp}{Sp}
\DeclareMathOperator{\dist}{dist}
\DeclareMathOperator{\tset}{tset}
\begin{document}

\title{Distance sets of universal and Urysohn metric spaces}
\author{ N. W. Sauer}

\maketitle

\newcommand{\snl} {\\ \smallskip}
\newcommand{\mnl}{\\ \medskip} 
\newcommand{\Bnl} {\\ Bigskip}
\newcommand{\edge}{\makebox[22pt]{$\circ\mspace{-6 mu}-\mspace{-6 mu}\circ$}}
\newcommand{\nedge}{\makebox[22pt]{$\circ\mspace{-6 mu}\quad\: \mspace{-6 mu}\circ$}}

\newcommand{\restrict}[2]{#1\mspace{-2mu}\mathbin{\upharpoonright}\mspace{-1mu} #2}
\newcommand{\Kat}{Kat\u{e}tov }
\newcommand{\Fra}{Fra\"{\i}ss\'e  }
\newcommand{\str}[1]{\stackrel{#1}{\sim}}
\newcommand{\spe}{\mathrm{spec}}
\newcommand{\UR}{\boldsymbol{U}_{\hskip -3pt R}}

\newcommand{\upl}[1]{\sideset{ }{^{c}}{\operatorname{\mathit{{#1}}}}}
\newcommand{\lpl}[1]{\sideset{ }{_{\negmedspace c}}{\operatorname{\mathit{{#1}}}}}
\newcommand{\ipl}[1]{\sideset{ }{_{\negmedspace c}^{c}}{\operatorname{\mathit{{#1}}}}}

\newcommand{\last}[1]{\mathfrak{#1}\!\rceil}

\newtheorem{thm}{Theorem}[section]
\newtheorem{thmb}{Theorem}
\newtheorem{lem}{Lemma}[section]
\newtheorem{coroll}{Corollary}[section]
\newtheorem{ass}{Assumption}[section]
\newtheorem{defin}{Definition}[section]
\newtheorem{example}{Example}[section] 
\newtheorem{fact}{Fact}[section]

\newtheorem{prop}{Proposition}[section]
\newtheorem{obs}{Observation}[section]
\newtheorem{cor}{Corollary}[section]
\newtheorem{sublem}{Sublemma}[section]
\newtheorem{claim}{Claim}
\newtheorem{question}{Question}[section]

\newtheorem{comment}{Comment}[section]
\newtheorem{problem}{Problem}[section]
\newtheorem{remark}{Remark}[section]

\newcommand\con{\char'136{}}

\begin{abstract}

A metric space $\mathrm{M}=(M;\de)$ is  {\em homogeneous} if for every isometry $f$ of a finite subspace of $\mathrm{M}$ to a subspace of $\mathrm{M}$ there exists an isometry of $\mathrm{M}$ onto $\mathrm{M}$ extending $f$. A metric space $\boldsymbol{U}$   is an {\em Urysohn} metric space if it is homogeneous and separable and complete and if it isometrically embeds every separable metric space $\mathrm{M}$ with $\dist(\mathrm{M})\subseteq \dist(\boldsymbol{U})$.  (With $\dist(\mathrm{M})$ being the set of distances between points in $\mathrm{M}$.)

The main results are:  (1) A characterization of the sets $\dist(\boldsymbol{U})$ for Urysohn metric spaces $\boldsymbol{U}$.  (2) If $R$ is the distance set of an Urysohn metric space and $\mathrm{M}$ and $\mathrm{N}$ are two metric spaces, of any cardinality with distances in $R$, then they amalgamate disjointly to a metric space with distances in $R$.   (3) The completion of a homogeneous  separable  metric space $\mathrm{M}$ which embeds isometrically  every finite metric space $\mathrm{F}$ with $\dist(\mathrm{F})\subseteq \dist(\mathrm{M})$   is homogeneous. 
\end{abstract}

\section{Introduction}

There exists a homogeneous separable complete metric space, the classical {\em Urysohn metric space} $\boldsymbol{U}_{\hskip -2pt\Re_{\geq 0}}$, which embeds every separable metric space, see \cite{Ury}.  In this paper we will discuss the question: For which subsets $R$ of the reals $\Re$ and for which subsets  of the set of properties of being homogeneous, separable, complete, embedding every finite metric space with distances in $R$ or even embedding every separable metric space with distances in $R$, does there exist a metric space $\mathrm{M}$ with $\dist(\mathrm{M})= R$? It is for example well known that there exists a unique homogeneous, separable, complete metric space which isometrically embeds every separable metric space with set of distances a subset of the interval $[0,1]$. This space is the {\em Urysohn sphere} $\boldsymbol{U}_{\hskip -2pt[0,1]}$.

In \cite{KPT}, Kechris, Pestov and Todorcevic established a connection between structural Ramsey theory and automorphism groups of homogeneous relational structures and there  and in  particular  also in  \cite{Pe1}   the notion of oscillation stability of such groups is defined. It is  shown that this notion is equivalent to a partition problem in the case of homogeneous metric spaces. A metric space $\mathrm{M}=(M;\de)$  being {\em oscillation stable} if for $\epsilon>0$  and $f:M\to \Re$ bounded and uniformly continuous there exists a copy $\mathrm{M}^\ast=(M^\ast;\de)$ of $\mathrm{M}$ in $\mathrm{M}$ so that:
\[
\sup\{|f(x)-f(y)|\mid x,y\in M^\ast\}<\epsilon.
\]
Prompted by results of V. Milman \cite{Mil} it is shown in \cite{OS} that the Hilbert sphere in $\ell_2$ is not oscillation stable and in \cite{LNSA} that the Urysohn sphere $\boldsymbol{U}_{\hskip -1pt[0,1]}$  is oscillation stable. In particular it was shown in \cite{LNSA} that the Urysohn metric spaces $\boldsymbol{U}_{\hskip -2pt n}$ are indivisible, which due to the main  result in \cite{LANVT} implies the oscillation stability of $\boldsymbol{U}_{\hskip -2pt n}$. ($\boldsymbol{U}_{\hskip -2pt n}$  is the Urysohn metric space with $\dist(\boldsymbol{U}_{\hskip -2pt n})=\{0,1,2,\dots,n-1\}$.) It is shown in  \cite{SA1} that all Urysohn metric spaces with a finite distance set are indivisible. In order to determine if Urysohn metric spaces, or which Urysohn metric spaces,  are oscillation stable we characterize in this paper the  distance sets of Urysohn metric spaces; providing a baseline for the set of spaces to be investigated for oscillation stability. (According to Theorem 1 below, any two Urysohn metric spaces with the same set of distances being isometric.)   See  \cite{Ng} for  details and additional references on oscillation stability.

J. Clemens in \cite{Cl} proved that: Given a set of non-negative reals, $R \subseteq  \Re_{\geq 0}$, the set $R$ is the set of distances for some complete and separable metric space if and only if $R$ is an analytic set containing 0 and either$R$ is countable or 0 is a limit point of $A$.  Clemens then asks to determine distance sets of metric spaces which are homogeneous.  The following three definitions describe the metric spaces under consideration and define the basic tool for their characterization:

\begin{defin}\label{defin:uni}
A metric space $\mathrm{M}$ is {\em universal} if it is homogeneous and isometrically embeds every finite metric space $\mathrm{F}$ with $\dist(\mathrm{F})\subseteq \dist(\mathrm{M})$. 
\end{defin}

\begin{defin}\label{defin:Uryunif}
A metric space $\boldsymbol{U}$ is an {\em Urysohn} metric space if it is homogeneous and separable and complete and if it isometrically embeds every separable metric space $\mathrm{M}$ with $\dist(\mathrm{M})\subseteq \dist(\boldsymbol{U})$
\end{defin}
Note that every countable universal metric space with a finite set of distances is an Urysohn metric space. 

\begin{defin}\label{defin:4valcondi}
A set $R\subseteq \Re_{\geq 0}$ satisfies the {\em 4-values condition} if:

\noindent
For all pairs of metric spaces $\mathrm{A}=(\{x,y,z\};\de_\mathrm{A})$ and\/ $\mathrm{B}=(\{u,y,z\};\de_\mathrm{B})$ with $\dist(\mathrm{A})\cup\dist(\mathrm{B})\subseteq R$ and with $\de_\mathrm{A}(y,z)=\de_\mathrm{B}(y,z)$ exists a metric space $\mathrm{C}=(\{x,y,z,u\};\de_\mathrm{C})$  with $\dist(\mathrm{C})\subseteq R$ and so that the subspace of $\mathrm{C}$ induced by $\{x,y,z\}$ is equal to $\mathrm{A}$ and the subspace of $\mathrm{C}$ induced by $\{u,y,z\}$ is equal to $B$.
\end{defin}
The notion  4-values condition was first formulated in \cite{DLPS} and will be discussed and used at length in this paper.

We will prove:

\begin{thmb}\label{thm:hauptb} (See Theorem \ref{thm:haupt} and Theorem \ref{thm:Uryuni}.)
Let $0\in R\subseteq \Re_{\geq 0}$ with $0$ as a limit.  Then there exists an  Urysohn metric space $\UR$ if and only if $R$ is   a closed subset of\/ $\Re$ which satisfies the 4-values condition. 

Let $0\in R\subseteq \Re_{\geq 0}$ which does not have $0$ as a limit.  Then there exists an Urysohn metric space $\UR$ if and only if  $R$ is a countable subset of\/ $\Re$ which  satisfies the 4-values condition.

Any two Urysohn metric spaces having the same set of distances are isometric.
\end{thmb}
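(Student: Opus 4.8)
The plan is to prove the three assertions separately, reducing the two equivalences to one underlying principle that I take as the basic tool: \emph{the class $\mathcal{M}_R$ of finite metric spaces $\mathrm{F}$ with $\dist(\mathrm{F})\subseteq R$ has the amalgamation property if and only if $R$ satisfies the 4-values condition of Definition \ref{defin:4valcondi}}. For the necessity direction, suppose an Urysohn space $\UR$ exists. To get the 4-values condition, given $\mathrm{A}=(\{x,y,z\};\de_\mathrm{A})$ and $\mathrm{B}=(\{u,y,z\};\de_\mathrm{B})$ with $\de_\mathrm{A}(y,z)=\de_\mathrm{B}(y,z)$, I embed both into $\UR$; since the two copies of the edge $\{y,z\}$ have equal length, homogeneity of $\UR$ moves the copy of $\mathrm{B}$ onto one sharing the same pair $\{y,z\}$ as the copy of $\mathrm{A}$, and the induced four-point subspace is the required $\mathrm{C}$. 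For the constraint on $R$ as a set: if $0$ is not a limit of $R$ then $R$ is bounded away from $0$, so $\UR$ is uniformly discrete, hence (being separable) countable, whence $R=\dist(\UR)$ is countable. If $0$ is a limit of $R$, I would show $R$ is closed by realizing an arbitrary limit $r$ of $R$ as a distance: fix $a\in\UR$, pick $r_n\in R$ with $r_n\to r$ and $t_n\in R$ with $t_n\to 0$ and $|r_{n+1}-r_n|\le t_n$, and build by successive amalgamations a countable $R$-space $a,b_1,b_2,\dots$ with $\de(a,b_n)=r_n$ and $\de(b_n,b_{n+1})\le t_n$; telescoping makes $(b_n)$ Cauchy, so embedding this space into $\UR$ and using completeness yields $b$ with $\de(a,b)=r$.

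For the sufficiency direction, assume $R$ satisfies the 4-values condition and is either closed (if $0$ is a limit) or countable (otherwise). By the basic tool $\mathcal{M}_R$ is an amalgamation class, and it is plainly hereditary with the joint embedding property. When $R$ is countable it has only countably many isometry types, so its \Fra limit $\mathrm{M}$ is a countable homogeneous space embedding every member of $\mathcal{M}_R$, and hence by a direct-limit argument every countable, and so every separable, $R$-space. If $0$ is not a limit, $\mathrm{M}$ is uniformly discrete, therefore complete, and is the desired $\UR$. If $0$ is a limit, I take $\UR$ to be the completion of $\mathrm{M}$: it is complete and separable by construction, and homogeneous by the completion result announced in the introduction (the completion of a homogeneous separable universal space is homogeneous). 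When $R$ is uncountable, which happens only in the closed case, the same scheme applies after first replacing $R$ by a countable dense subset $R'\subseteq R$ with $0\in R'$ that still satisfies the 4-values condition, forming $\mathrm{M}_{R'}$, and completing it.

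The main obstacle is this last, uncountable/closed case, where three points must be reconciled. First one must produce a countable dense $R'\subseteq R$ inheriting the 4-values condition. Second one must verify $\dist(\UR)=\overline{R'}=R$: the inclusion $\subseteq$ is just continuity of $\de$ on the completion, while $\supseteq$ forces every $r\in R$ to be realized and so re-uses the Cauchy-approximation device from the closedness argument above. Third, and hardest, one must show that $\UR$ embeds \emph{every} separable $R$-space $\mathrm{N}$, not merely every $R'$-space: here I would approximate a countable dense subset of $\mathrm{N}$ by $R'$-valued configurations, embed those into $\mathrm{M}_{R'}$ consistently, and pass to the limit in the completion. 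Each of these steps leans essentially on density of $R'$ in $R$ together with amalgamation, and the interplay of approximation with the 4-values condition is the technical heart of the whole theorem.

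Finally, for the uniqueness assertion I would run a single back-and-forth argument covering both cases. Let $\UR$ and $\UR'$ be Urysohn spaces with the same distance set $R$, and fix countable dense subsets $D$ and $D'$. Constructing a partial isometry stepwise, to adjoin a new point I embed the relevant finite configuration — the current image together with the new point, which is a member of $\mathcal{M}_R$ since all its distances lie in $R$ — into the target and align it with the existing image by homogeneity; alternating the two directions makes the constructed image dense. The resulting isometry between countable dense subsets then extends uniquely, by completeness, to an isometry $\UR\to\UR'$.
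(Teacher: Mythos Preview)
Your proposal is correct and follows essentially the same architecture as the paper: necessity via homogeneity plus a Cauchy-limit argument for closedness, sufficiency via a countable dense $R'\subseteq R$ retaining the 4-values condition, its \Fra limit, and the completion thereof, with the approximation of $R$-spaces by $R'$-spaces correctly identified as the technical core (this is exactly Lemma~\ref{lem:4valdenseR}, Theorem~\ref{thm:join3}, Lemma~\ref{lem:unemb5} and Theorem~\ref{thm:embedwh}). The only deviations are cosmetic: the paper verifies closedness (Lemma~\ref{lem:clUry0}) by directly exhibiting the auxiliary metric space rather than assembling it through successive amalgamations, reduces the embedding of separable $R$-spaces to that of \emph{finite} ones via Corollary~\ref{cor:realint} instead of approximating a whole dense subset at once, and proves uniqueness (Theorem~\ref{thm:Uryuni}) by a single there-and-back embedding rather than a full back-and-forth.
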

It  follows  from Theorem 1.4 in \cite{DLPS}  and is  stated in this paper as Theorem \ref{thm:basic1} that if $0\in R\subseteq \Re_{\geq 0}$ is a countable set of numbers which satisfies the 4-values condition then there exists a unique countable universal metric space $\UR$ with $\dist(\UR)=R$. Note that this space $\UR$ is not an Urysohn metric space if $0$ is a limit of $R$ and $R$ is not closed in $\Re$.

\begin{thmb}\label{thm:4necb}(See Theorem \ref{thm:4nec}.)
The set of distances of a universal metric space satisfies the 4-values condition. 
\end{thmb}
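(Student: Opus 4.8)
The plan is to read the $4$-values condition directly off a copy of $\mathrm{M}$. Given the two ``triangles'' $\mathrm{A}=(\{x,y,z\};\de_\mathrm{A})$ and $\mathrm{B}=(\{u,y,z\};\de_\mathrm{B})$ with distances in $R=\dist(\mathrm{M})$ sharing the value $\de_\mathrm{A}(y,z)=\de_\mathrm{B}(y,z)$, I would use universality to embed $\mathrm{A}$ and $\mathrm{B}$ into $\mathrm{M}$ as two independent finite subspaces, then use homogeneity to move one of the two copies so that the copies of $\{y,z\}$ coincide; the four points of $\mathrm{M}$ so obtained induce a subspace that is the desired $\mathrm{C}$, and every distance occurring in it automatically lies in $\dist(\mathrm{M})=R$.

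In detail: since $\mathrm{M}$ is universal and $\mathrm{A}$ is a finite metric space with $\dist(\mathrm{A})\subseteq R$, there is an isometric copy of $\mathrm{A}$ in $\mathrm{M}$ on points $\bar x,\bar y,\bar z$ (the images of $x,y,z$), and likewise a copy of $\mathrm{B}$ on points $\hat u,\hat y,\hat z$. Because $\de_\mathrm{M}(\bar y,\bar z)=\de_\mathrm{A}(y,z)=\de_\mathrm{B}(y,z)=\de_\mathrm{M}(\hat y,\hat z)$, the map $\hat y\mapsto\bar y$, $\hat z\mapsto\bar z$ is an isometry between finite subspaces of $\mathrm{M}$, hence extends to an isometry $g$ of $\mathrm{M}$ onto $\mathrm{M}$. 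Then $\bar y=g(\hat y)$, $\bar z=g(\hat z)$, $g(\hat u)$ form a copy of $\mathrm{B}$ in $\mathrm{M}$ sharing the edge $\{\bar y,\bar z\}$ with the copy of $\mathrm{A}$. Consider the four points $\bar x,\bar y,\bar z,g(\hat u)$: the triple $\{\bar x,\bar y,\bar z\}$ induces $\mathrm{A}$, the triple $\{g(\hat u),\bar y,\bar z\}$ induces $\mathrm{B}$, and all six pairwise distances belong to $R$. Since $\mathrm{A}$ and $\mathrm{B}$ are genuine three-point spaces, $\bar y$ and $\bar z$ are distinct from one another and from both $\bar x$ and $g(\hat u)$, so the only coincidence that could occur is $\bar x=g(\hat u)$; when it does not occur, the subspace of $\mathrm{M}$ on $\{\bar x,\bar y,\bar z,g(\hat u)\}$, with these points renamed $x,y,z,u$, is the required $\mathrm{C}$.

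It then remains to handle the degenerate possibility $\bar x=g(\hat u)$. In that case $\de_\mathrm{A}(x,y)=\de_\mathrm{M}(\bar x,\bar y)=\de_\mathrm{M}(g(\hat u),\bar y)=\de_\mathrm{B}(u,y)$ and similarly $\de_\mathrm{A}(x,z)=\de_\mathrm{B}(u,z)$, so $\mathrm{A}$ and $\mathrm{B}$ are isometric via $x\leftrightarrow u$ fixing $y,z$. Writing $a=\de_\mathrm{A}(x,y)$, $b=\de_\mathrm{A}(x,z)$, one can simply define $\mathrm{C}$ on $\{x,y,z,u\}$ to restrict to $\mathrm{A}$ on $\{x,y,z\}$, to $\mathrm{B}$ on $\{u,y,z\}$, and to have $\de_\mathrm{C}(x,u)=\min\{a,b\}\in\dist(\mathrm{A})\subseteq R$; the two new triangles $\{x,u,y\}$ (sides $\min\{a,b\},a,a$) and $\{x,u,z\}$ (sides $\min\{a,b\},b,b$) are easily seen to satisfy the triangle inequality, so $\mathrm{C}$ is a metric space. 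I do not expect a genuine obstacle here: the only structural content is that universality produces the two copies and homogeneity superimposes them along $\{y,z\}$, and the degenerate case is entirely elementary; the minor point of care is precisely ensuring that the four points are distinct so that $\mathrm{C}$ really is a four-point metric space.
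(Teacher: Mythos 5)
Your proof is correct and follows essentially the same route as the paper: Theorem \ref{thm:4nec} embeds $\mathrm{A}$ into $\mathrm{M}$ and then realizes the restricted type function of the second apex over the shared edge (Lemma \ref{lem:realin4}, whose proof is exactly your ``superimpose by homogeneity'' step), reading the missing distance off the resulting four points. The only difference is that you explicitly dispose of the coincidence $\bar x=g(\hat u)$, a degenerate case the paper's argument passes over in silence (its realization $q'$ could in principle equal $p'$); your elementary fix with $\de_{\mathrm{C}}(x,u)=\min\{a,b\}$ is valid.
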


Proposition 10 of \cite{Ng} provides an example of a countable homogeneous metric space whose completion is not homogeneous. But, in the case of universal metric spaces we have the following: 
\begin{thmb}\label{thm:join3b} (See  Theorem \ref{thm:afterth}.)
The completion of a universal  separable  metric space $\mathrm{M}$   is homogeneous.  
\end{thmb}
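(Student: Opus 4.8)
The plan is to show that the completion $\widehat{\mathrm{M}}$ of a universal separable metric space $\mathrm{M}$ is homogeneous by reducing isometries between finite subspaces of $\widehat{\mathrm{M}}$ to isometries between finite subspaces of $\mathrm{M}$, where homogeneity is available by hypothesis, and then using a back-and-forth argument together with completeness. First I would record the elementary fact that $\mathrm{M}$ is dense in $\widehat{\mathrm{M}}$ and that $\dist(\widehat{\mathrm{M}})\subseteq\overline{\dist(\mathrm{M})}$; the reverse inclusion and the precise description of $\dist(\widehat{\mathrm{M}})$ are not actually needed for homogeneity, but the density is the workhorse. Let $F=\{p_1,\dots,p_n\}$ and $G=\{q_1,\dots,q_n\}$ be finite subspaces of $\widehat{\mathrm{M}}$ and let $f:F\to G$ be an isometry. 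The goal is to extend $f$ to an isometry of $\widehat{\mathrm{M}}$ onto itself.

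The key step is a perturbation/approximation lemma: given $\varepsilon>0$, there exist finite subspaces $F'=\{p_1',\dots,p_n'\}$ and $G'=\{q_1',\dots,q_n'\}$ of $\mathrm{M}$ with $\de(p_i,p_i')<\varepsilon$, $\de(q_i,q_i')<\varepsilon$, and such that $i\mapsto q_i'$ is the image of $i\mapsto p_i'$ under an isometry of $\mathrm{M}$. The difficulty is that merely choosing $p_i'\in \mathrm{M}$ close to $p_i$ and $q_i'\in\mathrm{M}$ close to $q_i$ does \emph{not} guarantee that $\de(p_i',p_j')=\de(q_i',q_j')$, so the naive perturbation need not be an isometry. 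To fix this I would instead work with the metric on $F$ directly: pick $p_i'\in\mathrm{M}$ with $\de(p_i,p_i')$ tiny, so the finite space $F'$ is a small perturbation of $F$; its distances lie in $\dist(\mathrm{M})$ automatically. Now I want a second copy of $F'$ inside $\mathrm{M}$ that is close to $G$. For this I would build an auxiliary finite metric space on $2n$ points obtained by gluing a copy of $F'$ and a suitably perturbed copy of the metric structure of $G$ along distances forced to be small, and embed it into $\mathrm{M}$ using universality of $\mathrm{M}$ (here is where Definition~\ref{defin:uni} enters: $\mathrm{M}$ embeds every finite metric space with distances in $\dist(\mathrm{M})$, and homogeneity of $\mathrm{M}$ lets us transport one half of the embedding). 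The 4-values condition (equivalently, disjoint amalgamation available from the cited results of \cite{DLPS}) is what guarantees such a glued finite space with the prescribed cross-distances exists with all distances in $\dist(\mathrm{M})$. This produces $F',G'\subseteq\mathrm{M}$, isometric via an isometry $g_\varepsilon$ of $\mathrm{M}$ extending $p_i'\mapsto q_i'$, with $F'$ $\varepsilon$-close to $F$ and $G'$ $\varepsilon$-close to $G$.

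With the approximation lemma in hand, the rest is a standard completeness argument. Enumerate a countable dense subset $D=\{d_1,d_2,\dots\}$ of $\mathrm{M}$ (hence of $\widehat{\mathrm{M}}$). I would construct, by a back-and-forth induction, a sequence of finite partial isometries $f_0\subseteq f_1\subseteq\cdots$ together with self-isometries $g_k$ of $\mathrm{M}$ such that $g_k$ agrees with $f_k$ up to error $2^{-k}$ on the finite domain accumulated so far, at stage $k$ adjoining $d_k$ to the domain (forth) and $d_k$ to the range (back). At each stage the approximation lemma, applied with a rapidly shrinking $\varepsilon$, lets me perturb the current configuration into $\mathrm{M}$, use homogeneity of $\mathrm{M}$ to realize the required extension there, and then read off the next partial isometry on $\widehat{\mathrm{M}}$; controlling the errors by a summable sequence ensures the pointwise limit $g=\lim_k g_k$ exists on the dense set $D$, is distance-preserving, hence extends uniquely to an isometry of $\widehat{\mathrm{M}}$, which is surjective because the back-steps put every $d_k$ in the range and $D$ is dense. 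Finally $g$ extends $f$ because $f_0=f$ was the base case. I expect the main obstacle to be exactly the approximation lemma — specifically, arranging that the perturbed finite spaces $F'$ and $G'$ are genuinely \emph{isometric} inside $\mathrm{M}$ (not merely close to $F$ and $G$), which is where the 4-values condition / disjoint amalgamation and the universality of $\mathrm{M}$ must be combined carefully; the back-and-forth bookkeeping with error control is routine once that lemma is established.
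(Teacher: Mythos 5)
Your overall strategy is sound and your ``approximation lemma'' is exactly the technical heart of the paper's argument: it is the $h$-join machinery of Lemmas \ref{lem:join1} and \ref{lem:join2} (build a finite amalgam of the two perturbed configurations with small prescribed cross-distances, using the 4-values condition to keep all distances in $\dist(\mathrm{M})$, then realize it over the half already sitting in $\mathrm{M}$ via Lemma \ref{lem:realin5} and homogeneity). You are also right that this is where the real work lies. The paper organizes the reduction differently: it first passes to a countable dense universal subspace $\UR$ (Corollary \ref{cor:univsub}), and instead of running a back-and-forth on the completion it invokes the criterion of Theorem \ref{thm:realint} --- a complete separable space is homogeneous once every \Kat function has a realization --- so that only a single one-point extension problem has to be solved.

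The genuine gap in your write-up is precisely in the step you call routine. In the forth step you must produce an \emph{exact} image $f_{k+1}(d_{k+1})\in\widehat{\mathrm{M}}$ realizing the prescribed distances to $\mathrm{ran}(f_k)$; this is nothing other than realizing a \Kat function of the completion. A single application of your approximation lemma with parameter $\varepsilon$ only yields a point $e_\varepsilon$ whose distances to $\mathrm{ran}(f_k)$ are correct up to $O(\varepsilon)$, so you need $e_\varepsilon$ to converge as $\varepsilon\to 0$. But two self-isometries of $\mathrm{M}$ that agree up to $\varepsilon$ on the finite set $\dom(f_k)$ can differ arbitrarily at the new point $d_{k+1}$, and likewise two independent realizations of nearby types can be far apart; so the sequence of candidate images need not be Cauchy. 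Applying the approximation lemma to $\dom(f_k)\cup\{d_{k+1}\}$ instead is circular, since the target of $d_{k+1}$ is what you are trying to construct. The missing ingredient is a \emph{coherent} choice of successive approximate realizations: in the paper's proof of Theorem \ref{thm:join3} the points $c_{n,m}$ are chosen so that $\de(c_{n,m},c_{n+1,m})=\de(b'_{n,m},b'_{n+1,m})<h_n$, using the extension property of $\UR$ to pin each new approximate realization at a controlled distance from the previous one; this is what makes the sequence Cauchy. (The alternative of carrying approximate partial isometries through the whole back-and-forth also works but then your $F_k$ and $G_k$ are no longer exactly isometric, so your approximation lemma as stated no longer applies at the next stage and must itself be weakened to approximately isometric inputs.) Finally, your argument needs arbitrarily small positive values in $\dist(\mathrm{M})$ for the joins to exist; if $0$ is not a limit of $\dist(\mathrm{M})$ the space is uniformly discrete and already complete, a trivial case you should dispose of separately.
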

On the other hand,  according to example \ref{ex:compluniv}, the completion  of a universal separable metric space $\boldsymbol{U}$ need not be universal.  That is, the completion $\mathrm{M}$ might not  embed every finite metric space with distances in $\dist(\mathrm{M})$. The next theorem characterizes the finite metric spaces which have an embedding into $\mathrm{M}$. In particular it follows  from the next Theorem that $\dist(\mathrm{M})$ is the closure of $\dist(\boldsymbol{U})$ in $\Re$, Corollary~\ref{cor:embedwh}. See Example \ref{ex:notclosed} for a metric space for which the distance set of its completion is not closed. 

\begin{thmb}\label{thm:embedwhb} (See theorem \ref{thm:embedwh}.)
Let $0\in R\subseteq \Re_{\geq 0}$ be countable,  satisfy the 4-values condition and have $0$ as a limit.  Let  $\mathrm{M}=(M;\de)$ be  the completion of $\UR$. 

A finite metric space $\mathrm{A}=(A;\de_{\mathrm{A}})$ with $A=\{a_i\mid i\in m\}$ has an isometric embedding into $\mathrm{M}$ if and only if for every $\epsilon>0$ there exists a metric space $\mathrm{B}$ with $B=\{b_i\mid i\in m\}$ and distances in $R$ so that $|\de(a_i,a_j)-\de(b_i,b_j)|<\epsilon$ for all $i,j\in m$. 
\end{thmb}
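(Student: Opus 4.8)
The plan is to prove both directions of the biconditional, with the interesting content in the ``if'' direction. For the easy direction, suppose $\mathrm{A}$ embeds isometrically into $\mathrm{M}$; since $\mathrm{M}$ is the completion of $\UR$, each point of $\mathrm{A}$ is a limit of points of $\UR$, so for any $\epsilon>0$ we may choose $b_i\in \UR$ with $\de(a_i,b_i)<\epsilon/2$ and the triangle inequality gives $|\de(a_i,a_j)-\de(b_i,b_j)|<\epsilon$; the $b_i$ have distances in $R$ because they lie in $\UR$. (One should note the $b_i$ need not be distinct, but $\mathrm{B}$ is only required to be a metric space on an $m$-element index set, so repeats give distance $0\in R$; if one wants genuine distinctness this can be arranged by a further small perturbation since $\UR$ is infinite.)

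For the ``if'' direction, assume that for every $\epsilon>0$ there is a metric space $\mathrm{B}_\epsilon$ on index set $m$ with distances in $R$ and $|\de(a_i,a_j)-\de_{\mathrm{B}_\epsilon}(b_i,b_j)|<\epsilon$. The goal is to produce an isometric copy of $\mathrm{A}$ inside $\mathrm{M}$. The strategy is a Cauchy-sequence construction: build a sequence of $m$-tuples in $\UR$ (or in $\mathrm{M}$) whose pairwise distances converge to those of $\mathrm{A}$ and which converge coordinatewise in the complete space $\mathrm{M}$; the limit tuple is then an isometric copy of $\mathrm{A}$. The subtle point is that having, for each $\epsilon$, \emph{some} tuple that is $\epsilon$-close in distances is not by itself enough — one needs the successive tuples to be close to \emph{each other} in $\mathrm{M}$ so that the sequence is Cauchy. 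This is exactly where the universality and homogeneity of $\UR$, together with the 4-values condition, must be invoked: given a good tuple for $\epsilon_n$ sitting in $\UR$, one wants to find a good tuple for $\epsilon_{n+1}$ that lies within roughly $\epsilon_n$ of it.

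The mechanism I would use is iterated one-point extension inside $\UR$. Fix a rapidly decreasing sequence $\epsilon_n \to 0$, say $\epsilon_{n+1} \le \epsilon_n/4$. Suppose inductively we have points $p^n_0,\dots,p^n_{m-1}\in \UR$ with $|\de(p^n_i,p^n_j)-\de_{\mathrm{A}}(a_i,a_j)|<\epsilon_n$. Using $\mathrm{B}_{\epsilon_{n+1}}$ and the fact that $R$ is dense enough near the relevant values (or directly that each $\de_{\mathrm{A}}(a_i,a_j)$ is a limit of elements of $R$), one writes down, for each coordinate $i$ in turn, the finite metric space obtained by adjoining a new point $q_i$ to $\{p^n_0,\dots,p^n_{m-1},q_0,\dots,q_{i-1}\}$ at distances in $R$ that are $<\epsilon_n$ from the corresponding $p^n$-coordinate and $<\epsilon_{n+1}$-consistent with $\mathrm{A}$; checking that this is a genuine metric space with distances in $R$ is where the 4-values condition does its work, repeatedly amalgamating triangles, and this is the main obstacle — one must verify carefully that at each step the prescribed distances can simultaneously be realized in $R$, which means tracking the triangle inequalities among the old $p^n_i$, the new $q_j$, and the cross distances. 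Once the finite extension exists with distances in $R$, universality of $\UR$ places it isometrically inside $\UR$, giving the points $p^{n+1}_i = q_i$. Then $\de(p^n_i,p^{n+1}_i)<\epsilon_n$, so each coordinate sequence $(p^n_i)_n$ is Cauchy in $\UR$, hence converges in the complete space $\mathrm{M}$ to a point $a_i^\ast$; continuity of the metric gives $\de(a^\ast_i,a^\ast_j) = \lim_n \de(p^n_i,p^n_j) = \de_{\mathrm{A}}(a_i,a_j)$, so $i\mapsto a^\ast_i$ is the desired isometric embedding of $\mathrm{A}$ into $\mathrm{M}$. The remaining loose end is injectivity of this embedding, handled as before by a harmless perturbation when some $\de_{\mathrm{A}}(a_i,a_j)=0$ is not intended, or simply by observing that distinct $a_i$ force $\de_{\mathrm{A}}(a_i,a_j)>0$ and hence $a^\ast_i\ne a^\ast_j$.
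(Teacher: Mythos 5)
Your overall architecture coincides with the paper's: necessity by approximating the points of an embedded copy of $\mathrm{A}$ from the dense subspace $\UR$, and sufficiency by producing a sequence of $m$-tuples in $\UR$ whose mutual distances converge to those of $\mathrm{A}$ and whose consecutive members are coordinatewise close, so that each coordinate sequence is Cauchy and the limit tuple in the complete space $\mathrm{M}$ is the desired copy. That frame is correct. However, the step you yourself flag as ``the main obstacle'' --- that the new tuple $q_0,\dots,q_{m-1}$ can be adjoined to the old one with \emph{all} distances in $R$, with each $\de(p^n_i,q_i)$ small, and with every triangle inequality satisfied --- is precisely the mathematical content of the theorem, and your proposal does not supply the argument. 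In the paper this is the notion of an $h$-join and Lemmas \ref{lem:join1} and \ref{lem:join2}, and it is not a routine verification. Two ideas are missing from your sketch. First, the cross distances $\de(q_i,p^n_j)$ for $j\neq i$ cannot simply be ``written down'' coordinate by coordinate: in the paper they are produced by amalgamating over $R$, via Theorem \ref{thm:alamgam3}, the old space together with the partial join against the new space; this is where the 4-values condition actually enters, not through ad hoc amalgamation of triangles. Second, the matched-pair distance $\de(p^n_i,q_i)=h_i$ must itself lie in $R$ (this is where $0$ being a limit of $R$ is used) and must be \emph{dominated by every other distance in the configuration}, which forces $h_i<\tfrac{1}{4}\min(\dist(\mathrm{A}))$ and a geometric decrease of the $h_i$ across the coordinates $i$ within a single stage (the paper's $h_i=h^\ast_{i+1}$ with $2x^\ast<x$), not only across the stages $n$. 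With that choice the only triangle inequalities not already guaranteed by the amalgamation are those in which $h_i$ is the smallest side, and they reduce to $|\de(q_i,p^n_j)-\de(p^n_i,p^n_j)|\le k+l\le h_i$. Your single schedule $\epsilon_{n+1}\le\epsilon_n/4$ controls decay across stages but not across coordinates within a stage, and without the domination device the triples $(p^n_i,q_i,p^n_j)$ and $(p^n_i,q_i,q_j)$ are not shown to be metric.

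A smaller remark: the quantity $k=\min(\dist(\mathrm{A}))>0$ must be fixed at the outset precisely so that the domination condition $h\le\min(\cdots)$ is satisfiable; your sketch never records this lower bound, and without it the one-point extensions can fail already at the first coordinate. The necessity direction as you give it is fine and matches the paper's one-line treatment.
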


Extending Urysohn's original result   M. Kat\u{e}tov in \cite{Kat}, using ''\Kat functions``, generalized Urysohn's construction to metric spaces which are ''$\kappa$-homogeneous`` and have weight $\kappa$ for $\kappa$ an inaccessible cardinal number. The distance sets of the such constructed Urysohn type spaces are either $\Re_{\geq 0}$ or the unit interval.  More recently those Urysohn spaces attracted attention because of interesting properties of their isometry group, $\mathrm{Iso}(\mathbf{U})$. For example Uspenskij's  result \cite{Uspens} that the isometry group of the Urysohn space is a universal Polish group and the connection of $\mathrm{Iso}(\boldsymbol{U}_{\hskip-2pt[0,1]})$ to minimal topological groups, \cite{Uspens2}. See also Mbombo and Pestove \cite{MPest} and Melleray \cite{Meller} for further discussion. 

We do not follow Kat\u{e}tov's method but lean instead on  the general \Fra theory, see \cite{Fr},  for our results. \Fra theory being particularly well suited for investigating partition problems of separable metric spaces, our main interest. Nevertheless it turned out to be easy to extend the arguments to obtain a general amalgamation result, Theorem~\ref{thm:alamgam3},  for metric spaces whose distance sets are subsets of a closed set of reals satisfying the 4-values condition. This then implies, by extending the \Fra constructions in an obvious way, (see for example \cite{Jonss} or more recently \cite{Barbina} or many other recent generalizations,)  the existence of Urysohn type metric spaces $\boldsymbol{U}$   which are ''$\kappa$-homogeneous`` and have weight $\kappa$ for $\kappa$ an inaccessible cardinal. The distance sets of those spaces  $\boldsymbol{U}$ are closed subsets of $\Re$ satisfying the 4-values condition. Providing another construction for the ''\Kat type metric spaces`` with sets of distances all of $\Re$ or the unit interval.

\section{Notation and \Fra theory}

For another and more detailed introduction to \Fra theory in the context of metric spaces see \cite{Ng}. The exposition here is complete and self contained but might require some, indeed very limited, familiarity with simple model theoretic constructions. 

A pair $\mathrm{H}=(H,\de)$ is a {\em premetric space} if $\de: H^2\to \Re_{\geq 0}$, the {\em distance function of\/ $\mathrm{H}$},   is a function with $\de(x,y)=0$ if and only if $x=y$ and $\de(x,y)=\de(y,x)$ for all $x,y\in H$.  For $A\subseteq H$ we denote by  $\restrict{\mathrm{H}}{A}$ the {\em substructure of\/ $\mathrm{H}$ generated by  $A$}, that is  the premetric space on $A$ with distance function the restriction of\/ $\de$ to $A^2$. The {\em skeleton} of $\mathrm{H}$ is the set of finite induced subspaces  of $\mathrm{H}$ and the {\em age} of $\mathrm{H}$ is the class of finite premetric spaces isometric to some element of the skeleton of $\mathrm{H}$. Let $\dist(\mathrm{H})=\{\de(x,y)\mid x,y\in H\}$.    

A  function $\mathfrak{t} : F\to \Re_{>0}$ with $F$ a finite subset of $H$  is a  {\em type function} of $\mathrm{H} $.   For $\mathfrak{t} $ a type function  let $\Sp(\mathfrak{t})$ be the premetric space on $F\cup \{\mathfrak{t}\}$ for which: 
\begin{enumerate}
\item $\restrict{\Sp(\mathfrak{t})}{\dom(\mathfrak{t})}=\restrict{\mathrm{H} }{\dom(\mathfrak{t})}$.
\item $\forall x\in F\, \,\big(\de(\mathfrak{t},x)=\mathfrak{t}(x)\big)$.
\end{enumerate}

Note  that $\Sp(\mathfrak{t})$ is a metric space if and only if $\restrict{\mathrm{H}}{\dom(\mathfrak{t})}$ is a metric space and  if   for all $x,y\in \dom(\mathfrak{t})$:
\begin{align}
|\mathfrak{t}(x)-\mathfrak{t}(y)|\leq d(x,y)\leq \mathfrak{t}(x)+\mathfrak{t}(y).
\end{align}

For $\mathfrak{t}$ a type function    let
\[
\tset(\mathfrak{t})=\{y\in H\setminus\dom(\mathfrak{t}) : \forall\, x\in  \dom(\mathfrak{t})\, \bigl(\de(y,x)=\mathfrak{t}(x)\bigr)\},
\]
the {\em typeset} of $\mathfrak{t}$. Every  element $y\in \tset(\mathfrak{t})$ is a {\em realization} of $\mathfrak{t}$ in $\mathrm{H}$.   Let $\dist(\mathfrak{t})=\{\mathfrak{t}(x)\mid x\in \dom(\mathfrak{t})\}$.

\begin{defin}\label{defin:Kat}
Let $\mathrm{M}=(M; \de)$ be a metric space. A type function $\mathfrak{k}$ of\/ $\mathrm{M}$ is {\em metric} if\/ $\Sp(\mathfrak{k})$ is a metric space and it is a \Kat function of\/ $\mathrm{M}$ if\/ $\Sp(\mathfrak{k})$ is an element of the age of $\mathrm{M}$. 

A type function of $\mathrm{M}$ is {\em restricted} if it is metric and if $\dist(\mathfrak{k})\subseteq \dist(\mathrm{M})$.
\end{defin}
Note that a type function $\mathfrak{k}$ of a universal metric space $\mathrm{M}$ is a \Kat function if and only if it is restricted.

\begin{lem}\label{lem:realin1}
If every restricted type function of a metric space $\mathrm{M}=(M;\de)$ has a realization in $\mathrm{M}$ then every countable metric space $\mathrm{N}=(N;\de)$ with $\dist(\mathrm{N})\subseteq \dist(\mathrm{M})$ has an isometric embedding into $\mathrm{M}$. 

If every \Kat function of a metric space $\mathrm{M}=(M;\de)$ has a realization in $\mathrm{M}$ then every countable metric space $\mathrm{N}=(N;\de)$  whose age is a subset of the age of $\mathrm{M}$ has an isometric embedding into $\mathrm{M}$. 
\end{lem}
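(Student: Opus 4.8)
The statement to prove is Lemma~\ref{lem:realin1}, a standard ``one-point extension implies embedding'' fact from Fra\"{\i}ss\'e theory, adapted to metric spaces.

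\medskip

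\textbf{Plan of proof.}
The plan is to prove both assertions at once by a back-and-forth--style enumeration argument using only the forth direction, since we are embedding a countable space into $\mathrm{M}$ and need not surject. Let $\mathrm{N}=(N;\de)$ be countable with $N=\{a_i\mid i\in\omega\}$ (the finite case being trivial, or a degenerate instance of the same argument). I would construct by recursion on $n$ an isometric embedding $f_n$ of the finite subspace $\restrict{\mathrm{N}}{\{a_i\mid i<n\}}$ into $\mathrm{M}$ so that $f_n\subseteq f_{n+1}$, and then take $f=\bigcup_{n\in\omega}f_n$, which is then an isometric embedding of $\mathrm{N}$ into $\mathrm{M}$ because isometry is witnessed by finitely many points at a time.

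\medskip

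The recursion step is where the hypothesis is used. Suppose $f_n$ has been constructed. Put $F=f_n[\{a_i\mid i<n\}]$, a finite subset of $M$, and define a type function $\mathfrak{t}$ of $\mathrm{M}$ with $\dom(\mathfrak{t})=F$ by setting $\mathfrak{t}\big(f_n(a_i)\big)=\de(a_n,a_i)$ for each $i<n$; note $\mathfrak{t}$ takes values in $\Re_{>0}$ precisely because $a_n\neq a_i$ and $\de$ is a metric, and we may assume $n\geq 1$ so $F\neq\emptyset$ (if $n=0$ simply pick $f_1(a_0)$ to be any point of $M$). Since $f_n$ is an isometry, $\restrict{\Sp(\mathfrak{t})}{\dom(\mathfrak{t})}=\restrict{\mathrm{M}}{F}$ is isometric to $\restrict{\mathrm{N}}{\{a_i\mid i<n\}}$, and $\Sp(\mathfrak{t})$ is isometric to $\restrict{\mathrm{N}}{\{a_i\mid i\leq n\}}$ via $f_n\cup\{(a_n,\mathfrak{t})\}$; in particular $\Sp(\mathfrak{t})$ is a metric space, so $\mathfrak{t}$ is a metric type function. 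For the first assertion we have the additional information $\dist(\mathrm{N})\subseteq\dist(\mathrm{M})$, hence $\dist(\mathfrak{t})\subseteq\dist(\mathrm{N})\subseteq\dist(\mathrm{M})$, so $\mathfrak{t}$ is \emph{restricted}; by hypothesis it has a realization $y\in\tset(\mathfrak{t})\subseteq M$, and we set $f_{n+1}=f_n\cup\{(a_n,y)\}$. For the second assertion the age of $\mathrm{N}$ is contained in the age of $\mathrm{M}$, so $\Sp(\mathfrak{t})$, being in the age of $\mathrm{N}$, lies in the age of $\mathrm{M}$; thus $\mathfrak{t}$ is a \Kat function, and again by hypothesis it has a realization $y$, and we set $f_{n+1}=f_n\cup\{(a_n,y)\}$. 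In both cases, since $\de(y,f_n(a_i))=\mathfrak{t}(f_n(a_i))=\de(a_n,a_i)$ for all $i<n$, the map $f_{n+1}$ is again an isometric embedding, completing the recursion.

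\medskip

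\textbf{Anticipated main obstacle.} There is no serious obstacle here; the content is entirely bookkeeping. The only point that needs a little care is the degenerate start of the recursion ($n=0$, empty domain), since a type function is required to have nonempty finite domain and range in $\Re_{>0}$: one handles $a_0$ by hand, mapping it to an arbitrary point of $M$, and begins the type-function machinery at $n=1$. Beyond that, one must remember to check that a union of a chain of partial isometries is a partial isometry, which is immediate because the distance condition is local (two-point), and that $f$ so obtained is injective, which follows from $\de(f(a_i),f(a_j))=\de(a_i,a_j)>0$ for $i\neq j$.
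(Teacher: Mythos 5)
Your proposal is correct and follows essentially the same route as the paper: enumerate $N$, extend a finite partial isometry one point at a time by realizing the restricted (resp.\ \Kat) type function induced on the image, and take the union. The only difference is that you treat the base case $n=0$ explicitly, which the paper glosses over by starting from the empty function.
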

\begin{proof}
Enumerate $N$ into an $\omega$ sequence $(v_i; i\in \omega)$ and for $n\in \omega$  let $N_n=\{v_i\mid i\in n\}$. If $f_n$ is an isometry of $\restrict{\mathrm{N}}{N_n}$ into $\mathrm{M}$ let $f_{n+1}$ be the extension of $f_n$ to an isometry of $\restrict{\mathrm{N}}{N_{n+1}}$ into $\mathrm{M}$ constructed as follows: Let $\mathfrak{k}$ be the type function of $\mathrm{M}$ with $\dom(\mathfrak{k})=f_n[N_n]$ and $\mathfrak{k}(f_n(x))=\de(x,v_n)$. Then $\mathfrak{k}$ is a restricted type function of $\mathrm{M}$ and hence has a realization, say $a$, in $M$. Let $f_{n+1}(v_n)=a$. 

Then $f=\bigcup_{n\in \omega}f_n$ with $f_0$ the empty function is an isometry of $\mathrm{N}$ into $\mathrm{M}$. 

The proof of the second part of the Lemma is analogues.
\end{proof}

\begin{lem}\label{lem:realin3}
Let $\mathrm{M}$ and $\mathrm{N}$ be two countable metric spaces with $\dist(\mathrm{M})=\dist(\mathrm{N})$ and so that every restricted type function of\/ $\mathrm{M}$ has a realization in $\mathrm{M}$ and every restricted type function of\/ $\mathrm{N}$ has a realization in $\mathrm{N}$. 

Or, let $\mathrm{M}$ and $\mathrm{N}$ be two countable metric spaces with equal ages  and so that every \Kat function of\/ $\mathrm{M}$ has a realization in $\mathrm{M}$ and every \Kat function of\/ $\mathrm{N}$ has a realization in $\mathrm{N}$. 

Then every isometry of a finite subspace of\/ $\mathrm{M}$ into $\mathrm{N}$ has an extension to an isometry of\/ $\mathrm{M}$ onto $\mathrm{N}$. 
\end{lem}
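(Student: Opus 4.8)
The plan is to run a standard back-and-forth (''shuttle'') argument, using the two hypotheses symmetrically. First I would let $g$ be the given isometry from a finite subspace $\restrict{\mathrm{M}}{A_0}$ of $\mathrm{M}$ onto a subspace of $\mathrm{N}$. Enumerate $M=\{a_i\mid i\in\omega\}$ and $N=\{b_i\mid i\in\omega\}$, and build an increasing chain of finite partial isometries $g_0\subseteq g_1\subseteq g_2\subseteq\cdots$ with $g_0=g$, alternating between two kinds of extension steps. At an ''odd'' step I ensure $a_n\in\dom(g_n)$: if $a_n$ is not already in the domain, form the type function $\mathfrak{k}$ of $\mathrm{N}$ with $\dom(\mathfrak{k})=g_{n-1}[\dom(g_{n-1})]$ and $\mathfrak{k}(g_{n-1}(x))=\de(x,a_n)$; since $\dist(\mathrm{M})=\dist(\mathrm{N})$ (resp. since $\age(\mathrm{M})=\age(\mathrm{N})$ in the second case) this is a restricted type function (resp. a \Kat function) of $\mathrm{N}$, so it has a realization $b\in N$, and I set $g_n=g_{n-1}\cup\{(a_n,b)\}$. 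At an ''even'' step I symmetrically ensure $b_n\in\operatorname{ran}(g_n)$ by applying the corresponding hypothesis for $\mathrm{M}$ to the type function of $\mathrm{M}$ with domain $\dom(g_{n-1})$ and values $\de(g_{n-1}^{-1}(\cdot),b_n)$, obtaining a realization $a\in M$ and setting $g_n=g_{n-1}\cup\{(a,b_n)\}$. Then $f=\bigcup_{n\in\omega}g_n$ is the desired isometry of $\mathrm{M}$ onto $\mathrm{N}$.

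The verification that each extension step is legitimate is essentially the computation already carried out in the proof of Lemma~\ref{lem:realin1}: one checks that the candidate type function is metric, which follows from the triangle inequalities (1) holding in $\mathrm{M}$ (resp. $\mathrm{N}$) together with the fact that $g_{n-1}$ is an isometry, and that its distance set lies in the common distance set (resp. that $\Sp$ of it lies in the common age). I would remark that for the second version one must also observe that adding one point keeps us inside the age: $\Sp(\mathfrak{k})$ is isometric to $\restrict{\mathrm{M}}{\dom(g_{n-1})\cup\{a_n\}}$, which is a finite subspace of $\mathrm{M}$ and hence in $\age(\mathrm{M})=\age(\mathrm{N})$, so $\mathfrak{k}$ is genuinely a \Kat function of $\mathrm{N}$.

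I expect the main (though still minor) obstacle to be bookkeeping: making the alternation precise so that every $a_i$ eventually enters the domain and every $b_i$ eventually enters the range, while keeping each $g_n$ finite and each a partial isometry. A clean way is to handle $a_{\lfloor n/2\rfloor}$ at even $n$ and $b_{\lfloor n/2\rfloor}$ at odd $n$, skipping the step (setting $g_n=g_{n-1}$) whenever the relevant point is already accounted for. Since each $g_n$ is a finite partial isometry between $\mathrm{M}$ and $\mathrm{N}$ and the chain is increasing, $f=\bigcup_n g_n$ is a partial isometry; by construction $\dom(f)=M$ and $\operatorname{ran}(f)=N$, so $f$ is an isometry of $\mathrm{M}$ onto $\mathrm{N}$ extending $g$. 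Both cases of the lemma are proved by exactly the same argument, invoking in the first case the realization hypothesis for restricted type functions and in the second the one for \Kat functions.
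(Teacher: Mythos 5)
Your proposal is correct and is essentially the paper's own proof: the paper simply says to extend the argument of Lemma~\ref{lem:realin1} to a back-and-forth construction alternating between $\mathrm{M}$ and $\mathrm{N}$, which is exactly the shuttle argument you carry out, including the key observation that equality of distance sets (resp.\ of ages) is what makes the induced type function restricted (resp.\ a \Kat function) in the other space. No gaps.
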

\begin{proof}
Extend the proof of Lemma \ref{lem:realin1} to a back and forth argument by alternating the extension of finite isometries between $\mathrm{M}$ and $\mathrm{N}$. (As in the standard proof that every countable dense and unbounded linear order is order isomorphic to the rationals.) 
\end{proof}

\begin{cor}\label{cor:realin3}
Let $\mathrm{M}$ be a countable  metric space so that every \Kat function of\/ $\mathrm{M}$ has a realization in $\mathrm{M}$. Then $\mathrm{M}$ is homogeneous. If every restricted type function of\/ $\mathrm{M}$ has a realization in $\mathrm{M}$ then $\mathrm{M}$ is universal.
\end{cor}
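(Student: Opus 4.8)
The plan is to deduce both assertions directly from Lemmas \ref{lem:realin1} and \ref{lem:realin3}, the only subtlety being the passage between ``restricted'' and ``\Kat'' type functions.

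First I would prove homogeneity. Apply the second part of Lemma \ref{lem:realin3} with $\mathrm{N}=\mathrm{M}$: the two spaces trivially have equal ages, and every \Kat function of $\mathrm{M}$ has a realization in $\mathrm{M}$ by hypothesis, so every isometry of a finite subspace of $\mathrm{M}$ into $\mathrm{M}$ extends to an isometry of $\mathrm{M}$ onto $\mathrm{M}$. That is exactly the statement that $\mathrm{M}$ is homogeneous.

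For the second assertion, assume every restricted type function of $\mathrm{M}$ has a realization in $\mathrm{M}$. Using the first part of Lemma \ref{lem:realin1} with $\mathrm{N}$ ranging over the (finite, hence countable) metric spaces $\mathrm{F}$ with $\dist(\mathrm{F})\subseteq\dist(\mathrm{M})$, each such $\mathrm{F}$ embeds isometrically into $\mathrm{M}$; hence the age of $\mathrm{M}$ is precisely the class of finite metric spaces with distances in $\dist(\mathrm{M})$. Consequently a type function of $\mathrm{M}$ is a \Kat function if and only if it is restricted: if $\mathfrak{k}$ is restricted then $\Sp(\mathfrak{k})$ is a finite metric space all of whose distances lie in $\dist(\restrict{\mathrm{M}}{\dom(\mathfrak{k})})\cup\dist(\mathfrak{k})\subseteq\dist(\mathrm{M})$, so $\Sp(\mathfrak{k})$ belongs to the age of $\mathrm{M}$; the converse direction is immediate from the definition. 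Therefore every \Kat function of $\mathrm{M}$ has a realization in $\mathrm{M}$, so $\mathrm{M}$ is homogeneous by the first part of the corollary, and it embeds every finite metric space with distances in $\dist(\mathrm{M})$ by the above. Hence $\mathrm{M}$ is universal.

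The argument is essentially bookkeeping built on the two preceding lemmas; the only point requiring care — and the closest thing to an obstacle — is verifying that once all finite metric spaces with the appropriate distance set embed into $\mathrm{M}$, the classes of restricted and of \Kat type functions coincide, so that the hypothesis about restricted type functions feeds correctly into the already-established first part.
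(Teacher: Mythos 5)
Your proof is correct and follows essentially the route the paper intends (the corollary is stated without proof as an immediate consequence of Lemmas \ref{lem:realin1} and \ref{lem:realin3}, applied with $\mathrm{N}=\mathrm{M}$). Your extra step identifying restricted with \Kat type functions once all finite spaces with distances in $\dist(\mathrm{M})$ embed is sound, though one could instead invoke the first (restricted) part of Lemma \ref{lem:realin3} with $\mathrm{N}=\mathrm{M}$ directly; either way the argument goes through.
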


\begin{lem}\label{lem:realin4}
Every  \Kat function of a homogeneous metric space $\mathrm{M}$ has a realization in $\mathrm{M}$. Every restricted type function of a universal metric space $\mathrm{M}$ has a realization in $\mathrm{M}$.
\end{lem}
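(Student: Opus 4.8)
The plan is to derive the second assertion from the first, and to prove the first directly from the definition of homogeneity, using nothing beyond Definition~\ref{defin:Kat}.

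First I would unpack what it means for $\mathfrak{k}$ to be a \Kat function of the homogeneous space $\mathrm{M}$: writing $F=\dom(\mathfrak{k})$, the finite premetric space $\Sp(\mathfrak{k})$ on $F\cup\{\mathfrak{k}\}$ belongs to the age of $\mathrm{M}$, so there is an isometric embedding $g\colon\Sp(\mathfrak{k})\to\mathrm{M}$. The point to keep in mind is that $g$ need not fix $F$: it carries $F$ onto some other finite set $g[F]\subseteq M$. Since $\restrict{\Sp(\mathfrak{k})}{F}=\restrict{\mathrm{M}}{F}$, the restriction $g\restriction F$ is an isometry of the finite subspace $\restrict{\mathrm{M}}{F}$ onto $\restrict{\mathrm{M}}{g[F]}$, so its inverse is an isometry between two finite subspaces of $\mathrm{M}$; by homogeneity it extends to an isometry $h$ of $\mathrm{M}$ onto $\mathrm{M}$.

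The second step is to verify that $a:=h(g(\mathfrak{k}))$ is the desired realization. For each $x\in F$ one has $\de(g(\mathfrak{k}),g(x))=\mathfrak{k}(x)$ by the definition of $\Sp(\mathfrak{k})$, and applying the isometry $h$ together with $h(g(x))=x$ (valid since $h$ extends $(g\restriction F)^{-1}$) yields $\de(a,x)=\mathfrak{k}(x)$ for every $x\in\dom(\mathfrak{k})$. One also needs $a\notin\dom(\mathfrak{k})$, which is immediate: if $a=x_{0}\in F$ then $\mathfrak{k}(x_{0})=\de(a,x_{0})=0$, contradicting that a type function takes values in $\Re_{>0}$. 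Hence $a\in\tset(\mathfrak{k})$, as wanted.

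For the second assertion I would invoke two facts already recorded in the text: a universal metric space is by Definition~\ref{defin:uni} homogeneous, and by the observation following Definition~\ref{defin:Kat} a type function of a universal metric space is a \Kat function precisely when it is restricted. Thus a restricted type function of a universal space $\mathrm{M}$ is a \Kat function of $\mathrm{M}$, and the first assertion supplies a realization. I do not anticipate any genuine obstacle; the only mild subtlety is the bookkeeping forced by $g$ not being the identity on $F$, which is dealt with by transporting back along the automorphism $h$.
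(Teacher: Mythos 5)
Your proposal is correct and follows essentially the same route as the paper: embed $\Sp(\mathfrak{k})$ into $\mathrm{M}$, use homogeneity to extend the inverse of the restriction to $\dom(\mathfrak{k})$ to an automorphism $h$, and take the image of the new point under $h$ as the realization. The extra checks you supply (that $a\notin\dom(\mathfrak{k})$, and the reduction of the second assertion to the first via the remark that restricted type functions of universal spaces are exactly the \Kat functions) are left implicit in the paper but are exactly the intended justifications.
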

\begin{proof}
If $\mathrm{M}$ is homogeneous and $\mathfrak{k}$ is a \Kat  function of $\mathrm{M}$,  there exists an isometry $f$ of $\Sp(\mathfrak{k})$ into $\mathrm{M}$. Let $g$ be the restriction of $f$ to $\dom(\mathfrak{k})$. Then $g^{-1}$ is an isometry of a finite subspace of $\mathrm{M}$ to a finite subspace of $\mathrm{M}$, which has, because $\mathrm{M}$ is homogeneous, an extension, say $h$, to an isometry of $\mathrm{M}$ onto $\mathrm{M}$. The point $h(f(\mathfrak{k}))$ is a realization of $\mathfrak{k}$. 
\end{proof}

\begin{lem}\label{lem:realin5}
Let $\mathrm{M}=(M;\de)$ be a homogeneous metric space and $\mathrm{A}=(A;\de_\mathrm{A})$ a countable metric space with $A\cap M$ finite whose age is a subset of the age of $\mathrm{M}$ and for which $\de(x,y)=\de_\mathrm{A}(x,y)$ for all $x,y\in A\cap M$. Then there exists a realization of $\mathrm{A}$ in $\mathrm{M}$, that is a subset $B\subseteq M\setminus(A\cap M)$ for which there an isometry of $\mathrm{A}$ onto $\restrict{\mathrm{M}}{(B\cup (M\cap A))}$  which fixes $A\cap M$ pointwise.
\end{lem}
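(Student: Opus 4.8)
The plan is to run the ``forth'' half of the back-and-forth argument used in Lemma~\ref{lem:realin1}, but keeping the finite set $A\cap M$ fixed throughout. Enumerate $A\setminus M$ as $\{a_i : i\in\omega\}$ (a finite list, in which case the construction below terminates after finitely many steps, if $A\setminus M$ is finite), put $A_0 = A\cap M$ and $A_{n+1} = A_n\cup\{a_n\}$. By the hypothesis $\de(x,y)=\de_\mathrm{A}(x,y)$ for $x,y\in A\cap M$, the identity map on $A\cap M$ is an isometry of $\restrict{\mathrm{A}}{A_0}$ onto $\restrict{\mathrm{M}}{A_0}$, so we may take $f_0=\mathrm{id}_{A_0}$ (the empty function when $A\cap M=\emptyset$). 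We then build a chain of isometries $f_n:\restrict{\mathrm{A}}{A_n}\to\mathrm{M}$, each extending the previous one and fixing $A\cap M$ pointwise.

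For the inductive step, given $f_n$, consider the type function $\mathfrak{k}$ of $\mathrm{M}$ with $\dom(\mathfrak{k})=f_n[A_n]$ and $\mathfrak{k}(f_n(x))=\de_\mathrm{A}(x,a_n)$ for $x\in A_n$. Since $f_n$ is an isometry, $\Sp(\mathfrak{k})$ is isometric to $\restrict{\mathrm{A}}{A_{n+1}}$, a finite subspace of $\mathrm{A}$, hence an element of $\age(\mathrm{A})\subseteq\age(\mathrm{M})$; thus $\mathfrak{k}$ is a \Kat function of $\mathrm{M}$. By Lemma~\ref{lem:realin4} it has a realization $b_n\in M$. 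Since $b_n\in\tset(\mathfrak{k})\subseteq M\setminus\dom(\mathfrak{k})$, the point $b_n$ lies outside $f_n[A_n]$; in particular $b_n\notin A\cap M$ (because $A\cap M\subseteq A_n$ and $f_n$ fixes $A\cap M$) and $b_n\neq b_i$ for all $i<n$. Put $f_{n+1}=f_n\cup\{(a_n,b_n)\}$; this is an isometry of $\restrict{\mathrm{A}}{A_{n+1}}$ into $\mathrm{M}$ fixing $A\cap M$ pointwise.

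Finally set $f=\bigcup_{n}f_n$ and $B=\{b_n:n\in\omega\}=f[A\setminus M]$. Then $f$ is an isometry of $\mathrm{A}$ onto $\restrict{\mathrm{M}}{f[A]}$ fixing $A\cap M$ pointwise, and $f[A]=(A\cap M)\cup B$ with $B\subseteq M\setminus(A\cap M)$, so $B$ is the desired realization of $\mathrm{A}$ in $\mathrm{M}$.

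I do not expect a real obstacle: the argument is a direct adaptation of Lemmas~\ref{lem:realin1} and \ref{lem:realin4}. The two places where care is needed are verifying that each realization $b_n$ is a genuinely new point, distinct from the points of $A\cap M$ and from the earlier $b_i$ (which is immediate from $\tset(\mathfrak{k})\subseteq M\setminus\dom(\mathfrak{k})$), and checking that $\Sp(\mathfrak{k})\in\age(\mathrm{M})$ — this last point is precisely where the hypotheses $\age(\mathrm{A})\subseteq\age(\mathrm{M})$ and the agreement of $\de$ and $\de_\mathrm{A}$ on $A\cap M$ enter.
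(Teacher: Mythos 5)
Your proof is correct and is exactly the argument the paper intends: the paper's own proof is the one-line sketch ``By induction on $A\setminus M$ or a recursive construction realizing \Kat functions step by step,'' and your write-up is a careful elaboration of that recursion, using Lemma~\ref{lem:realin4} to realize each \Kat function and correctly checking that $\Sp(\mathfrak{k})\in\age(\mathrm{A})\subseteq\age(\mathrm{M})$ and that each new point avoids $f_n[A_n]$.
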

\begin{proof}
By induction on $A\setminus M$ or a recursive construction realizing \Kat functions step by step. 
\end{proof}

\begin{lem}\label{lem:univsub}
Let $\mathrm{M}=(M;\de)$ be a separable metric space and $T$ a countable subset of $M$. If\/ $\mathrm{M}$  realizes all of its restricted type functions then it  contains a countable dense subspace $\mathrm{S}$ with $T\subseteq S$, which realizes all of its restricted type functions. If\/ $\mathrm{M}$  realizes all of its \Kat functions then it  contains a countable dense subspace $\mathrm{S}$ with $T\subseteq S$, which realizes all of its \Kat functions.  
\end{lem}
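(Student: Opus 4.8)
The plan is a ``closing-off'' construction: build the desired subspace as an increasing union of countable pieces, at each step adjoining a realization of every restricted (resp. \Kat) type function that has so far appeared, and then observe that this is enough because any such type function of the union is already ``visible'' inside one of the finite stages. The point that makes the construction possible is that a \emph{countable} metric space has only \emph{countably many} restricted type functions: if $\mathrm{S}_n$ is countable then $\dist(\mathrm{S}_n)$ is countable, and a restricted type function of $\mathrm{S}_n$ is a map from a finite subset of the countable set $S_n$ into the countable set $\dist(\mathrm{S}_n)\cap\Re_{>0}$, so there are at most countably many.

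Concretely, using separability fix a countable dense $D\subseteq M$ and set $S_0=D\cup T$. Given a countable $S_n\subseteq M$, write $\mathrm{S}_n=\restrict{\mathrm{M}}{S_n}$. Each restricted type function $\mathfrak{t}$ of $\mathrm{S}_n$ is also a restricted type function of $\mathrm{M}$: its domain is a finite subset of $M$, $\Sp(\mathfrak{t})$ is a metric space, $\restrict{\Sp(\mathfrak{t})}{\dom(\mathfrak{t})}=\restrict{\mathrm{M}}{\dom(\mathfrak{t})}$ because $\dom(\mathfrak{t})\subseteq S_n$, and $\dist(\mathfrak{t})\subseteq\dist(\mathrm{S}_n)\subseteq\dist(\mathrm{M})$. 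So by hypothesis each such $\mathfrak{t}$ has a realization in $M$; let $S_{n+1}$ be $S_n$ together with one such realization for each of the countably many restricted type functions of $\mathrm{S}_n$. Then $S_{n+1}$ is again countable. Put $S=\bigcup_{n\in\omega}S_n$ and $\mathrm{S}=\restrict{\mathrm{M}}{S}$; then $\mathrm{S}$ is a countable subspace of $\mathrm{M}$, it is dense because $D\subseteq S$, and $T\subseteq S_0\subseteq S$.

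It remains to check that $\mathrm{S}$ realizes each of its restricted type functions $\mathfrak{t}$. Since $\dom(\mathfrak{t})$ is finite, $\dom(\mathfrak{t})\subseteq S_n$ for some $n$; since $\dist(\mathfrak{t})$ is finite and $\dist(\mathrm{S})=\bigcup_m\dist(\mathrm{S}_m)$, also $\dist(\mathfrak{t})\subseteq\dist(\mathrm{S}_m)$ for some $m\geq n$. As the formation of $\Sp(\mathfrak{t})$ depends only on $\restrict{\mathrm{S}}{\dom(\mathfrak{t})}=\restrict{\mathrm{S}_m}{\dom(\mathfrak{t})}$, the function $\mathfrak{t}$ is a restricted type function of $\mathrm{S}_m$, so a realization $a$ of $\mathfrak{t}$ was adjoined to $S_{m+1}\subseteq S$; since $\mathfrak{t}$ is positive-valued, $a\notin\dom(\mathfrak{t})$, so $a$ realizes $\mathfrak{t}$ in $\mathrm{S}$.

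The \Kat version runs identically, except that at stage $n$ one adjoins a realization of every \Kat function $\mathfrak{k}$ of $\mathrm{S}_n$. There are still only countably many of these: such a $\mathfrak{k}$ has $\Sp(\mathfrak{k})\in\age(\mathrm{S}_n)$, hence $\dist(\mathfrak{k})\subseteq\dist(\mathrm{S}_n)$; and each such $\mathfrak{k}$ is a \Kat function of $\mathrm{M}$ because $\age(\mathrm{S}_n)\subseteq\age(\mathrm{M})$, so it is realized in $M$ by hypothesis. The final verification is as before, with one extra observation, and this is the only genuinely delicate point: given a \Kat function $\mathfrak{k}$ of $\mathrm{S}$, pick a finite $W\subseteq S$ with $\restrict{\mathrm{S}}{W}$ isometric to $\Sp(\mathfrak{k})$; then for $m$ large enough $\dom(\mathfrak{k})\cup W\subseteq S_m$, so $\restrict{\mathrm{S}_m}{W}=\restrict{\mathrm{S}}{W}\cong\Sp(\mathfrak{k})$ shows $\Sp(\mathfrak{k})\in\age(\mathrm{S}_m)$ and $\mathfrak{k}$ is a \Kat function of $\mathrm{S}_m$, so its realization was added at stage $m$. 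In short, the work is all in the two bookkeeping facts — that a restricted (resp. \Kat) type function of the union has its domain and, in the \Kat case, a witnessing finite subspace, appearing inside some finite stage $S_m$, and that the stage-$m$ type functions are only countably many.
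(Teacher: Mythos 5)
Your proof is correct and is essentially the paper's own argument: the same closing-off construction $S_0\supseteq T$ dense, $S_{n+1}=S_n\cup\{$realizations of the countably many restricted (resp. \Kat) type functions visible at stage $n\}$, $S=\bigcup_n S_n$. You merely spell out the bookkeeping the paper leaves implicit — that counting only type functions \emph{of the subspace} $\mathrm{S}_n$ (values in $\dist(\mathrm{S}_n)$, and in the \Kat case a witnessing copy inside some $S_m$) is both what keeps each stage countable and what suffices in the limit.
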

\begin{proof}
Let $\mathrm{M}$ be separabe and realize all of its restricted type functions. For $A\subseteq M$  let $\spe(A)$ be the set of distances between points of $A$ and $\mathcal{K}(A)$ the set of restricted type  functions  $\mathfrak{k}$ of $\mathrm{M}$ with $\dom(\mathfrak{k})\subseteq A$.  

Let $S_0$ be a countable dense subset of $M$. If for $n\in \omega$ a countable set  $S_n$  has been determined, choose a realization $\bar{\mathfrak{k}}$ for every restricted type function $\mathfrak{k}\in \mathcal{K}(S_n)$. Let $S_{n+1}=S_n\cup \{\bar{\mathfrak{k}}\mid \mathfrak{k}\in \mathcal{K}(S_n)\}$. The set $S_{n+1}$ is countable because $\mathcal{K}(S_n)$ is countable. Then $S=\bigcup_{n\in \omega}S_n$ is countable and every restricted type function $\mathfrak{k}\in \mathcal{K}(S)$ has a realization in $S$.  

The proof in the case $\mathrm{M}$ homogeneous is analogues. 
\end{proof}
Hence we obtain from  Corollary \ref{cor:realin3}:

\begin{cor}\label{cor:univsub}
Every separable universal metric space $\mathrm{M}=(M;\de)$  contains a countable dense universal subspace. Every separable homogeneous metric space $\mathrm{M}=(M;\de)$  contains a countable dense homogeneous subspace.
\end{cor}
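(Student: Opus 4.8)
The plan is simply to chain together Lemma~\ref{lem:realin4}, Lemma~\ref{lem:univsub} and Corollary~\ref{cor:realin3}; no new construction is needed, since the real work has already been done in Lemma~\ref{lem:univsub}.

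First I would treat the universal case. Let $\mathrm{M}=(M;\de)$ be separable and universal. By Lemma~\ref{lem:realin4} every restricted type function of $\mathrm{M}$ has a realization in $\mathrm{M}$, so the first part of Lemma~\ref{lem:univsub} applies (take $T=\emptyset$) and yields a countable dense subspace $\mathrm{S}$ of $\mathrm{M}$ that realizes all of its own restricted type functions. Since $\mathrm{S}$ is countable, the second assertion of Corollary~\ref{cor:realin3} then gives that $\mathrm{S}$ is universal, which is what we want.

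The homogeneous case is verbatim the same with ``restricted type function'' replaced throughout by ``\Kat function'': Lemma~\ref{lem:realin4} guarantees that a separable homogeneous $\mathrm{M}$ realizes all of its \Kat functions, the second part of Lemma~\ref{lem:univsub} produces a countable dense subspace $\mathrm{S}$ realizing all of its \Kat functions, and the first assertion of Corollary~\ref{cor:realin3} shows that this $\mathrm{S}$ is homogeneous.

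I do not expect any genuine obstacle. The only points meriting a sentence of care are internal to the cited results: that a restricted type function (resp. \Kat function) of the subspace $\mathrm{S}$ is at the same time one of $\mathrm{M}$ — which is precisely why Lemma~\ref{lem:univsub} is formulated so that its conclusion is already stated about $\mathrm{S}$ — and that ``universal'' subsumes homogeneity, which is built into the statement of Corollary~\ref{cor:realin3} via the fact that every \Kat function is restricted. If one wishes the slightly stronger and essentially free statement that the countable dense universal (resp. homogeneous) subspace can be taken to contain a prescribed countable set $T\subseteq M$, it suffices to pass that $T$, rather than $\emptyset$, to Lemma~\ref{lem:univsub}.
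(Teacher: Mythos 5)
Your proof is correct and follows exactly the paper's route: the corollary is stated immediately after Lemma~\ref{lem:univsub} with the remark ``Hence we obtain from Corollary~\ref{cor:realin3}'', i.e.\ precisely the chain Lemma~\ref{lem:realin4} $\Rightarrow$ Lemma~\ref{lem:univsub} $\Rightarrow$ Corollary~\ref{cor:realin3} that you describe. Your added observation that a restricted type function (resp.\ \Kat function) of the subspace $\mathrm{S}$ is also one of $\mathrm{M}$ is the right point of care and is implicit in the paper.
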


\begin{thm}\label{thm:realint}
Every  \Kat function of a homogeneous metric space $\mathrm{M}$ has a realization in $\mathrm{M}$. Every restricted type function of a universal metric space $\mathrm{M}$ has a realization in $\mathrm{M}$.

If a metric space $\mathrm{M}$ is countable and every \Kat function has a realization in $\mathrm{M}$ then $\mathrm{M}$ is homogeneous. If a metric space $\mathrm{M}$ is countable and every restricted type function has a realization in $\mathrm{M}$ then $\mathrm{M}$ is universal.

If a metric space $\mathrm{M}$ is complete and separable and every \Kat  function has a realization in $\mathrm{M}$ then $\mathrm{M}$ is homogeneous. If a metric space $\mathrm{M}$ is complete and separable and every restricted type function has a realization in $\mathrm{M}$ then $\mathrm{M}$ is an Urysohn metric space.
\end{thm}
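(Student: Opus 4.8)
The first two statements merely restate Lemma~\ref{lem:realin4} and Corollary~\ref{cor:realin3}, so the whole content lies in the third, concerning complete separable spaces; the plan below treats only that case, in two halves.

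First, suppose $\mathrm{M}$ is complete and separable and realizes every \Kat function. I would prove homogeneity by the usual back-and-forth. Fix a finite partial isometry $f$ of $\mathrm{M}$ and a countable dense set $Q=\{q_i\mid i\in\omega\}$, and build a chain $f=f_0\subseteq f_1\subseteq f_2\subseteq\cdots$ of finite partial isometries of $\mathrm{M}$. The generic step, to force a point $q\notin\dom(f_m)$ into the domain, is: let $\mathfrak{k}$ be the type function with $\dom(\mathfrak{k})=f_m[\dom(f_m)]$ and $\mathfrak{k}(f_m(x))=\de(x,q)$ for $x\in\dom(f_m)$; then $x\mapsto f_m(x)$, $q\mapsto\mathfrak{k}$ is an isometry of the finite subspace $\restrict{\mathrm{M}}{\dom(f_m)\cup\{q\}}$ onto $\Sp(\mathfrak{k})$, so $\Sp(\mathfrak{k})\in\age(\mathrm{M})$ and $\mathfrak{k}$ is a \Kat function; pick a realization $b\in\mathrm{M}$ of $\mathfrak{k}$ and set $f_{m+1}=f_m\cup\{(q,b)\}$, which is again a finite partial isometry precisely because $b$ realizes $\mathfrak{k}$. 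Alternating this with its mirror image applied to $f_m^{-1}$ (forcing the $q_i$ into the range), one gets $g_0=\bigcup_m f_m$, an isometry from a superset of $Q$ onto a superset of $Q$, hence between two dense subsets of $\mathrm{M}$. Since $\mathrm{M}$ is complete, $g_0$ extends by uniform continuity to an isometry $g$ of $\mathrm{M}$ into $\mathrm{M}$; its image is an isometric copy of the complete space $\mathrm{M}$, hence closed, and it contains the dense set $g_0[Q]$, so $g[\mathrm{M}]=\mathrm{M}$. As $f\subseteq g_0\subseteq g$, this shows $\mathrm{M}$ is homogeneous.

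Next, suppose $\mathrm{M}$ is complete and separable and realizes every restricted type function. Since $\Sp(\mathfrak{k})\in\age(\mathrm{M})$ forces both that $\mathfrak{k}$ is metric and that $\dist(\mathfrak{k})\subseteq\dist(\mathrm{M})$, every \Kat function of $\mathrm{M}$ is restricted; hence $\mathrm{M}$ realizes all its \Kat functions and is homogeneous by the first half. This gives three of the four defining properties of an Urysohn space; for the embedding property, let $\mathrm{N}$ be separable with $\dist(\mathrm{N})\subseteq\dist(\mathrm{M})$ and choose a countable dense $N_0\subseteq N$. Then $\restrict{\mathrm{N}}{N_0}$ is countable with distances in $\dist(\mathrm{M})$, so by Lemma~\ref{lem:realin1} it admits an isometric embedding $h_0$ into $\mathrm{M}$; since $N_0$ is dense in $\mathrm{N}$ and $\mathrm{M}$ is complete, $h_0$ extends along Cauchy sequences to an isometric embedding of $\mathrm{N}$ into $\mathrm{M}$. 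Therefore $\mathrm{M}$ is an Urysohn metric space.

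The only step requiring real care is the passage to the completion in the homogeneity argument: one must check that the alternation genuinely makes both $\bigcup_m\dom(f_m)$ and $\bigcup_m f_m[\dom(f_m)]$ dense in $\mathrm{M}$, that each $f_{m+1}$ is still an honest finite partial isometry (which is exactly what "realization of $\mathfrak{k}$" delivers), and that the isometric image of the complete space $\mathrm{M}$ under $g$ is closed, so that surjectivity follows from density of the range. Everything else is routine unwinding of the definitions of \Kat and restricted type functions and of the realization relation.
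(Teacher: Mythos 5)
Your proposal is correct, and for the Urysohn half (every \Kat function is restricted, hence $\mathrm{M}$ is homogeneous; then embed a separable $\mathrm{N}$ by embedding a countable dense subset via Lemma~\ref{lem:realin1} and extending by completeness) it coincides with the paper's argument. For the homogeneity half you take a mildly different route: the paper invokes Lemma~\ref{lem:univsub} to produce a countable dense subspace $\mathrm{S}\supseteq F\cup f[F]$ realizing all of its \Kat functions, applies Corollary~\ref{cor:realin3} to extend $f$ to an automorphism of $\mathrm{S}$, and then extends to $\mathrm{M}$ by completeness; you instead run the back-and-forth directly inside $\mathrm{M}$, realizing each \Kat function in $\mathrm{M}$ itself rather than in a prefabricated countable dense homogeneous subspace. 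The two arguments are the same back-and-forth-plus-completion idea organized differently: yours is more self-contained (it bypasses Lemma~\ref{lem:univsub} and the small point that \Kat functions of $\mathrm{S}$ must be identified with \Kat functions of $\mathrm{M}$ with domain in $S$), while the paper's is shorter because it reuses the countable case wholesale. One wording slip in your surjectivity step: the closed set $g[\mathrm{M}]$ is dense because it contains $Q$ itself, which the backward steps guarantee lies in the range of $g_0$, not because it contains $g_0[Q]$ (the isometric image of a dense set need not be dense); since you explicitly arrange $Q\subseteq g_0[\dom(g_0)]$, this is cosmetic and the argument stands.
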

\begin{proof}
On account of Corollary \ref{cor:realin3} and Lemma \ref{lem:realin4} it remains to consider the case that $\mathrm{M}$ is complete and separable.

Let $F$ be a finite subset of $M$ and $f$ an isometry of $\restrict{\mathrm{M}}{F}$  into $\mathrm{M}$. Lemma \ref{lem:univsub} yields a dense countable subspace $\mathrm{S}=(S;\de)$  of $\mathrm{M}$, with $F\subseteq S$,  which in the case of \Kat functions realizes all of its \Kat functions and hence is homogeneous on account of Corollary~\ref{cor:realin3}. It follows that there is an extension $g$ of $f$ to an isometry  of $\mathrm{S}$ onto $\mathrm{S}$. Because $\mathrm{M}$ is complete the isometry $g$ has an extension to an isometry of $\mathrm{M}$ to $\mathrm{M}$. It follows that $\mathrm{M}$ is homogeneous.  

In the case of reduced type functions the metric space $\mathrm{M}$ is homogeneous as well because because then every reduced type function is a \Kat function. Let $\mathrm{N}=(N;\de)$ be a separable metric space with $\dist(\mathrm{N})\subseteq \dist(\mathrm{M})$. Let $T$ be a countable dense subset of $N$. According to Lemma \ref{lem:realin1} there exists an isometry $f$ of $\restrict{\mathrm{N}}{T}$ into $\mathrm{M}$, which because $\mathrm{M}$ is complete, has an extension to an isometry of $\mathrm{N}$ into $\mathrm{M}$. Hence $\mathrm{M}$ is Urysohn.

\end{proof}

\begin{cor}\label{cor:realint}
Every separable, complete and universal metric space $\mathrm{M}$ is an Urysohn metric space.
\end{cor}
\begin{proof}
It follows from Theorem \ref{thm:realint} that $\mathrm{M}$ realizes all of its reduced type functions and hence it follows again from Theorem \ref{thm:realint} that $\mathrm{M}$ is Urysohn.
\end{proof}

\begin{thm}\label{thm:Uryuni}
Any  two homogeneous and separable and complete metric spaces   $\mathrm{M}$ and $\mathrm{N}$ with the same age are isometric. Any two Urysohn metric spaces $\mathrm{M}$ and $\mathrm{N}$ with $\dist(\mathrm{M})=\dist(\mathrm{N})$ are isometric.
\end{thm}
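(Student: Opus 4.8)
The plan is to reduce the two claims to a single back-and-forth argument built on the machinery already assembled, in particular on Lemma~\ref{lem:realin3}, Lemma~\ref{lem:univsub}/Corollary~\ref{cor:univsub}, and the completeness-extension step used in the proof of Theorem~\ref{thm:realint}. First I would handle the second assertion by noting that it is subsumed by the first: if $\mathrm{M}$ and $\mathrm{N}$ are Urysohn metric spaces with $\dist(\mathrm{M})=\dist(\mathrm{N})$, then by Theorem~\ref{thm:realint} every restricted type function of each has a realization, and since for a universal (hence homogeneous, embedding-complete) space the restricted type functions coincide with the \Kat functions, the two spaces have the same age. So it suffices to prove the first assertion, and then the second follows.

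For the first assertion, let $\mathrm{M}=(M;\de)$ and $\mathrm{N}=(N;\de)$ be homogeneous, separable, complete, with $\age(\mathrm{M})=\age(\mathrm{N})$. By Theorem~\ref{thm:realint} (the homogeneous half of the first paragraph), every \Kat function of $\mathrm{M}$ has a realization in $\mathrm{M}$, and likewise for $\mathrm{N}$. Now invoke Corollary~\ref{cor:univsub}: each of $\mathrm{M}$, $\mathrm{N}$ contains a countable dense homogeneous subspace, say $\mathrm{S}\subseteq\mathrm{M}$ and $\mathrm{T}\subseteq\mathrm{N}$; moreover, by Lemma~\ref{lem:univsub} we may arrange (and this is the point where some care is needed) that $\mathrm{S}$ realizes all of its own \Kat functions and similarly for $\mathrm{T}$. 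Because $\mathrm{S}$ is dense in $\mathrm{M}$ and $\mathrm{T}$ dense in $\mathrm{N}$, and the age only depends on the finite subspaces, we have $\age(\mathrm{S})=\age(\mathrm{M})=\age(\mathrm{N})=\age(\mathrm{T})$. Lemma~\ref{lem:realin3} (second alternative) then gives an isometry $g_0$ of $\mathrm{S}$ \emph{onto} $\mathrm{T}$. Finally, since $\mathrm{M}$ is the completion of $\mathrm{S}$ and $\mathrm{N}$ the completion of $\mathrm{T}$, the isometry $g_0\colon\mathrm{S}\to\mathrm{T}$ is uniformly continuous and surjective onto a dense subset, hence extends uniquely to an isometry $g\colon\mathrm{M}\to\mathrm{N}$, which is onto because $g[\mathrm{M}]$ is complete (isometric image of a complete space) and dense in $\mathrm{N}$.

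One subtlety worth spelling out: Lemma~\ref{lem:realin3} as stated compares two countable spaces with equal ages each realizing all its \Kat functions and concludes that \emph{every isometry of a finite subspace of $\mathrm{M}$ into $\mathrm{N}$} extends to an isometry onto; taking the empty isometry yields the bare existence of $g_0$, which is all I need here. The only real obstacle is bookkeeping, not conceptual: I must make sure that the dense countable subspaces extracted from $\mathrm{M}$ and $\mathrm{N}$ genuinely realize all of \emph{their own} \Kat functions (not merely the \Kat functions of the ambient spaces), which is exactly the content of Lemma~\ref{lem:univsub}, and that ``same age'' passes correctly between a separable space and its countable dense subspaces (immediate, since finite subspaces of the dense set are cofinal in the skeleton up to isometry — in fact every finite subspace of $\mathrm{M}$ is arbitrarily well approximated, but here we do not even need approximation because $\age(\mathrm{S})\subseteq\age(\mathrm{M})$ trivially and the reverse inclusion is what Lemma~\ref{lem:realin1}/density gives via realizing type functions). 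I would therefore present the argument in the order: (1) dense countable homogeneous subspaces via Lemma~\ref{lem:univsub}; (2) equality of their ages; (3) back-and-forth isomorphism via Lemma~\ref{lem:realin3}; (4) extension to the completions; then (5) the short deduction of the Urysohn case from the age-version.
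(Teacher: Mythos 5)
Your proof is correct and follows the same overall strategy as the paper's: reduce to countable dense subspaces that realize all of their \Kat functions, apply the countable back-and-forth of Lemma~\ref{lem:realin3}, and extend to the completions; the Urysohn case reduces to the age case exactly as you say, since an Urysohn space is universal and hence its age is determined by its distance set.

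The one place you diverge is in how surjectivity after completion is secured, and it is worth recording. The paper fixes a countable dense $S_0\subseteq M$, embeds it into $\mathrm{N}$ by some isometry $f$, enlarges $f[S_0]\cup T_0$ to a countable dense homogeneous $\mathrm{T}\subseteq \mathrm{N}$, and then asserts an isometry $g$ of $\mathrm{T}$ into $\mathrm{M}$ with $g(f(x))=x$ for all $x\in S_0$, so that $g[T]\supseteq S_0$ is dense in $M$. That $g$ must extend the isometry $f^{-1}$ over the \emph{infinite} anchor set $f[S_0]$, which is not covered by Lemma~\ref{lem:realin5} (whose anchor set is finite) and does not follow from a one-point-at-a-time forth construction, since values forced later on points of $f[S_0]$ can conflict with the earlier free choices. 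Your version sidesteps this entirely by choosing dense homogeneous subspaces on \emph{both} sides and obtaining an isometry of $\mathrm{S}$ \emph{onto} $\mathrm{T}$ from Lemma~\ref{lem:realin3}, so density of domain and image is automatic and every step is an application of a lemma as literally stated. Your justification of $\age(\mathrm{S})=\age(\mathrm{M})$ via the realization in $S$ of \Kat functions of $\mathrm{M}$ with domain in $S$ (rather than density alone, which only gives one inclusion) is also the correct one.
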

\begin{proof}
Let $S_0$ be a countable dense subset of $M$ and $T_0$ a countable dense subset of $N$. There exists an isometry $f$ of $\restrict{\mathrm{M}}{S_0}$ into $\mathrm{N}$ and then a dense countable homogeneous subspace $\mathrm{T}=(T;\de)$ of $\mathrm{N}$ with $f[S_0]\cup T_0\subseteq N$. There exists an isometry $g$ of $\mathrm{T}$ into $\mathrm{M}$ with $g(f(x))=x$ for all $x\in S_0$. Then $g[T]$ is dense in $M$ and because $\mathrm{M}$ and $\mathrm{N}$ are complete there exists an extension of $g$ to an isometry of $\mathrm{N}$ onto $\mathrm{M}$. 
\end{proof}

\begin{defin}\label{defin:amalg}
A pair of metric spaces $(\mathrm{A},\mathrm{B})$ of metric spaces is an {\em amalgamation instance} if\/ $\de_\mathrm{A}(x,y)=\de_\mathrm{B}(x,y)$ for all $x,y\in A\cap B$. Then $\amalg(\mathrm{A},\mathrm{B})$ is the set of metric spaces  with:
\[
\amalg(\mathrm{A},\mathrm{B})=\{\mathrm{C}=(A\cup B; \de_\mathrm{C})\mid\text{$\restrict{\mathrm{C}}{A}=\mathrm{A}$ and $\restrict{\mathrm{C}}{B}=\mathrm{B}$}\}.
\]
For $R\subseteq \Re_{\geq 0}$ let 
\[
\amalg_R(\mathrm{A},\mathrm{B})=\{\mathrm{C}\in \amalg(\mathrm{A},\mathrm{B})\mid \dist(\mathrm{C})\subseteq R\}.
\]
\end{defin}

\begin{defin}\label{defin:age}
An {\em age of metric spaces} is a class of finite metric spaces closed under subspaces and isometric copies and which is updirected, that is for all metric spaces $\mathrm{A}$ and $\mathrm{B}$ in the class exists a metric space $\mathrm{C}$ in the class which isometrically embeds both spaces $\mathrm{A}$ and $\mathrm{B}$. An age is {\em countable} if it has countably many isometry classes.
\end{defin}

\begin{defin}\label{defin:Fraclass}
A {\em \Fra class} $\mathcal{A}$ of metric spaces is a countable age of metric spaces which is closed under amalgamation, that is for all amalgamation instances $(\mathrm{A},\mathrm{B})$ with $\mathrm{A},\mathrm{B}\in \mathcal{A}$ exists a metric space $\mathrm{C}\in \mathcal{A}\cap \amalg(\mathrm{A},\mathrm{B})$.
\end{defin}

\begin{thm}\label{thm:Fraisse}[\Fra\hskip-5pt]
For every \Fra class $\mathcal{A}$ of metric spaces exists a unique countable homogeneous metric space $\boldsymbol{U}_{\hskip-2pt\mathcal{A}}$, the {\em \Fra limit} of $\mathcal{A}$, whose age is equal to $\mathcal{A}$.
\end{thm}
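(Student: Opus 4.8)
The plan is the classical one-point-extension construction together with a back-and-forth argument, phrased through the \Kat-function machinery already set up. We may assume $\mathcal{A}$ contains a nonempty space (otherwise $\mathcal{A}=\{\emptyset\}$ and $\boldsymbol{U}_{\hskip-2pt\mathcal{A}}=\emptyset$ works trivially), so $\mathcal{A}$ contains the one-point space. For \emph{existence} I would build an increasing chain $\mathrm{M}_0\subseteq\mathrm{M}_1\subseteq\cdots$ of finite metric spaces, all in $\mathcal{A}$, and set $\boldsymbol{U}_{\hskip-2pt\mathcal{A}}=\bigcup_{n\in\omega}\mathrm{M}_n$. Fix an enumeration of the (countably many) isometry classes of $\mathcal{A}$. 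Along the construction I would dovetail two kinds of tasks: a \emph{joint-embedding task}, which given the current $\mathrm{M}_n$ and some $\mathrm{A}\in\mathcal{A}$ uses updirectedness (Definition \ref{defin:age}) to choose $\mathrm{C}\in\mathcal{A}$ embedding both $\mathrm{M}_n$ and $\mathrm{A}$ and lets $\mathrm{M}_{n+1}$ be a copy of $\mathrm{C}$ extending $\mathrm{M}_n$; and a \emph{realization task}, which given $\mathrm{M}_n$ and a metric type function $\mathfrak{k}$ of $\mathrm{M}_n$ with $\Sp(\mathfrak{k})\in\mathcal{A}$ forms the amalgamation instance $(\mathrm{M}_n,\Sp(\mathfrak{k}))$ (legitimate since, by definition of $\Sp$, $\restrict{\Sp(\mathfrak{k})}{\dom(\mathfrak{k})}=\restrict{\mathrm{M}_n}{\dom(\mathfrak{k})}$, and the new point $\mathfrak{k}\notin M_n$), invokes the amalgamation property of $\mathcal{A}$ (Definition \ref{defin:Fraclass}) to get $\mathrm{C}\in\mathcal{A}\cap\amalg(\mathrm{M}_n,\Sp(\mathfrak{k}))$, and lets $\mathrm{M}_{n+1}$ be such a $\mathrm{C}$; then in $\mathrm{M}_{n+1}$ the point $\mathfrak{k}$ realizes $\mathfrak{k}$ over $\dom(\mathfrak{k})$, hence over $\mathrm{M}_n$.

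With the bookkeeping arranged so that every isometry class of $\mathcal{A}$ is eventually treated by a joint-embedding task and, for every $n$, every metric type function of $\mathrm{M}_n$ whose spanned space lies in $\mathcal{A}$ is eventually treated by a realization task, the union $\boldsymbol{U}_{\hskip-2pt\mathcal{A}}$ is a countable metric space. Every finite subspace of it sits inside some $\mathrm{M}_n\in\mathcal{A}$, so $\age(\boldsymbol{U}_{\hskip-2pt\mathcal{A}})\subseteq\mathcal{A}$; the joint-embedding tasks give $\mathcal{A}\subseteq\age(\boldsymbol{U}_{\hskip-2pt\mathcal{A}})$; hence $\age(\boldsymbol{U}_{\hskip-2pt\mathcal{A}})=\mathcal{A}$. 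Any \Kat function $\mathfrak{k}$ of $\boldsymbol{U}_{\hskip-2pt\mathcal{A}}$ has finite domain contained in some $M_n$ and satisfies $\Sp(\mathfrak{k})\in\age(\boldsymbol{U}_{\hskip-2pt\mathcal{A}})=\mathcal{A}$, so it was treated by a realization task and is realized in $\boldsymbol{U}_{\hskip-2pt\mathcal{A}}$. Thus $\boldsymbol{U}_{\hskip-2pt\mathcal{A}}$ is countable and realizes all of its \Kat functions, so it is homogeneous by Corollary \ref{cor:realin3}, completing existence. For \emph{uniqueness}: if $\mathrm{M}$ and $\mathrm{N}$ are countable homogeneous with $\age(\mathrm{M})=\age(\mathrm{N})=\mathcal{A}$, then by Lemma \ref{lem:realin4} each realizes all of its \Kat functions, so Lemma \ref{lem:realin3} applies and the empty isometry extends to an isometry of $\mathrm{M}$ onto $\mathrm{N}$.

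The only point needing care is the bookkeeping in the existence construction. One first checks that the set of distances occurring in members of $\mathcal{A}$ is countable (each of the countably many isometry classes of $\mathcal{A}$ is a finite space), whence for each finite $\mathrm{M}_n$ there are only countably many metric type functions $\mathfrak{k}$ of $\mathrm{M}_n$ with $\Sp(\mathfrak{k})\in\mathcal{A}$; the countably many tasks contributed by the countably many $\mathrm{M}_n$ are then interleaved by a routine enumeration so that no task is postponed indefinitely. Everything else is a direct appeal to the closure properties in Definitions \ref{defin:age} and \ref{defin:Fraclass} together with Corollary \ref{cor:realin3} and Lemmas \ref{lem:realin3} and \ref{lem:realin4}.
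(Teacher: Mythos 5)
Your proof is correct. Note, however, that the paper offers no proof of this statement at all: it is stated as \Fra's classical theorem (with the attribution in the theorem header and the reference \cite{Fr}), so there is no in-paper argument to compare against. What you have written is the standard generic-chain construction --- dovetailing joint-embedding steps with one-point amalgamation steps, then quoting Corollary \ref{cor:realin3} for homogeneity and Lemmas \ref{lem:realin3} and \ref{lem:realin4} for uniqueness --- correctly adapted to the paper's type-function machinery, including the one point that genuinely needs checking, namely that the countability of the isometry classes of $\mathcal{A}$ forces the relevant distance set, and hence the set of type functions to be treated at each stage, to be countable.
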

Note that Theorem \ref{thm:Fraisse} implies that two countable universal metric spaces with set of distances are isometric and together with Theorem~\ref{thm:Uryuni} that any two homogeneous and  countable or separable and complete metric spaces with the same age are isometric. Hence we can define:
\begin{defin}\label{defin:boldu}
An Urysohn metric space or countable universal metric space with set of distances equals to $R$ will be dinoted by $\UR$. A homogenous metric space with age $\mathcal{A}$ which is countable or separable and complete will be denoted by $\boldsymbol{U}_{\hskip-2pt{\mathcal{A}}}$.
\end{defin}

\section{The 4-values condition}

A triple $(a,b,c)$ of non negative numbers is {\em metric} if $a\leq b+c$ and $b\leq a+c$ and $c\leq a+b$. Note that a triple $(a,b,c)$ is metric if and only if $|a-b|\leq c\leq a+b$. For $(a,b,c,d)$ a quadruple of numbers and $x$ a number write $x\leadsto(a,b,c,d)$ for: The triples $(x,a,b)$ and $(x,c,d)$ are metric and $a\geq \max\{b,c,d\}$.

\begin{lem}\label{lem:triv4}
 If $x\leadsto (a,b,c,d)$ then $|a-d|\leq b+c$  and $|b-c|\leq  a+d$.
 \end{lem}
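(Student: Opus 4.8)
The hypothesis $x\leadsto(a,b,c,d)$ unpacks into three facts: $(x,a,b)$ is a metric triple, $(x,c,d)$ is a metric triple, and $a\ge\max\{b,c,d\}$. Rewriting the two metric-triple conditions in the form noted just before the lemma, the first gives $|x-a|\le b$, equivalently $a-b\le x\le a+b$ (and also $x-b\le a$), while the second gives $|x-c|\le d$, equivalently $c-d\le x\le c+d$ (and also $x-d\le c$). The plan is to derive the two claimed inequalities $|a-d|\le b+c$ and $|b-c|\le a+d$ purely from these linear bounds on $x$, splitting into cases according to the signs of $a-d$ and $b-c$ so that the absolute values can be removed.

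For $|a-d|\le b+c$: if $a\ge d$, I would combine $a\le x+b$ (from the first triple, i.e. $x\ge a-b$) with $x\le c+d$ (from the second) to get $a\le x+b\le c+d+b$, hence $a-d\le b+c$; and since $a\ge d$ this already gives $|a-d|=a-d\le b+c$. If instead $a<d$, then $d-a\le d\le \max\{b,c,d\}=a$ would not directly help, so here I would instead use $a\ge\max\{b,c,d\}\ge c$ together with... actually more cleanly: $d-a \le d+b - a$ and from $x\le a+b$ and $x\ge c-d$ we get $c-d\le a+b$, i.e. $c\le a+b+d$; combined with $d\le a$ (since $a\ge d$ fails here, use instead $d \le$ ... ) — the cleanest route is to observe $d\le x+c$ (from $x\ge c-d$... wait that gives $d\ge c-x$). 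Let me just say: the case $a<d$ is handled symmetrically by noting $d\le c+x\le c+(a+b)$, so $d-a\le b+c$. So in all cases $|a-d|\le b+c$.

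For $|b-c|\le a+d$: since $a\ge b$ and $a\ge c$, we have both $b\le a\le a+d$ and $c\le a\le a+d$, hence $b-c\le b\le a+d$ and $c-b\le c\le a+d$, giving $|b-c|\le a+d$ immediately — in fact $|b-c|\le\max\{b,c\}\le a\le a+d$. This half uses only the hypothesis $a\ge\max\{b,c,d\}$ and needs no metric-triple information at all.

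The only mildly delicate point is the bookkeeping in the first inequality: making sure that in each sign-case one picks the correct one of the two bounds $x\ge a-b$, $x\le a+b$ from the first triple and pairs it with the correct bound $x\ge c-d$, $x\le c+d$ from the second so that $x$ cancels and the inequality comes out in the right direction. This is entirely routine once the cases are laid out; I expect no real obstacle, and in fact one can likely avoid explicit cases by writing $|a-d|\le|a-x|+|x-d|\le b + c$, using $|a-x|\le b$ from the first metric triple and $|d-x|\le c$ from the second (the latter being the "other" inequality $c\ge|x-d|$ extracted from $(x,c,d)$ metric, i.e. $x-d\le c$ and $d-x\le c$). That triangle-inequality-on-the-real-line phrasing is the slickest version and is what I would write up, with the $|b-c|\le a+d$ half dispatched in one line via $a\ge\max\{b,c\}$.
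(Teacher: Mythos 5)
Your proposal is correct and rests on the same elementary ingredients as the paper's proof: extracting the bounds $|x-a|\le b$ and $|x-d|\le c$ from the two metric triples to get $|a-d|\le b+c$, and using $a\ge\max\{b,c\}$ to get $|b-c|\le a+d$. The paper organizes this as a short case split on $a\ge b$ versus $b\ge a$, whereas your final ``triangle inequality on the real line'' phrasing $|a-d|\le|a-x|+|x-d|$ dispatches the first inequality without cases; the content is the same.
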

 \begin{proof}
If   $a\geq b$ then $|b-c|\leq  a+d$ and $a-b\leq x\leq c+d$ implying $|a-d|\leq b+c$. If $b\geq a$ then $|a-d|\leq b+c$ and $b-a\leq x \leq c+d$ implying $|b-c|\leq a+d$.
\end{proof}
 
\begin{defin}\label{defin:4valf}
For $R\subseteq \Re_{\geq 0}$ let $\boldsymbol{Q}(R)$ be the set of quadruples $(a,b,c,d)$ of numbers in $R$  for which there exists a number $x\in R$ with $x\leadsto(a,b,c,d)$.
\end{defin}

\begin{defin}\label{defin:4val}
A set  $R\subseteq \Re_{\geq 0}$ satisfies the {\em 4-values condition} if $(a,d,c,b)\in \boldsymbol{Q}(R)$ for every quadruple $(a,b,c,d)\in \boldsymbol{Q}(R)$   
\end{defin}
Note: The set $R$ satisfies the 4-values condition if and only if for every quadruple $(a,b,c,d)\in\boldsymbol{Q}(R)$ there exists a number $y\in R$ so that $y\leadsto (a,d,c,b)$ that is  the triples $(d,a,y)$ and $(b,c,y)$ are metric. 

\begin{lem}\label{lem:4val}
A set  $R\subseteq \Re_{\geq 0}$ satisfies the {\em 4-values condition} if and only if for any two metric spaces of the form $\mathrm{A}=(\{p,v,w\};\de_\mathrm{A})$ and $\mathrm{B}=(\{q,v,w\};\de_\mathrm{B})$ with $\dist(\mathrm{A})\subseteq R$ and $\dist(\mathrm{B})\subseteq R$ and $R\ni x=\de_\mathrm{A}(v,w)=\de_\mathrm{B}(v,w)$ the set\/ $\amalg_R(\mathrm{A},\mathrm{B})\not=\emptyset$.
\end{lem}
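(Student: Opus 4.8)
The statement is an unpacking of Definitions \ref{defin:4valf} and \ref{defin:4val} into the geometric language of three-point amalgamation over a common edge. The plan is to prove both implications by a direct translation between quadruples in $\boldsymbol{Q}(R)$ and pairs of triangles sharing the edge $\{v,w\}$. Throughout, fix the common edge length $x = \de_\mathrm{A}(v,w)=\de_\mathrm{B}(v,w)\in R$, and write $\mathrm{A}$ on vertices $\{p,v,w\}$ with $\de_\mathrm{A}(p,v)=a$, $\de_\mathrm{A}(p,w)=b$, and $\mathrm{B}$ on vertices $\{q,v,w\}$ with $\de_\mathrm{B}(q,v)=c$, $\de_\mathrm{B}(q,w)=d$. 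A space $\mathrm{C}\in\amalg_R(\mathrm{A},\mathrm{B})$ is then precisely a choice of $\de_\mathrm{C}(p,q)=y\in R$ such that both triangles $(y,a,c)$ (on $p,q,v$) and $(y,b,d)$ (on $p,q,w$) are metric; so $\amalg_R(\mathrm{A},\mathrm{B})\neq\emptyset$ iff some $y\in R$ satisfies $y\leadsto(a,c,b,d)$ after possibly reordering to meet the ``$\geq\max$'' bookkeeping in the definition of $\leadsto$.

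For the forward direction, suppose $R$ satisfies the 4-values condition and let $\mathrm{A},\mathrm{B}$ be as above. The hypothesis that $x$ forms a metric triangle with $(a,b)$ and with $(c,d)$ says exactly that one of the four quadruples obtained by permuting $(a,b,c,d)$ so that the largest entry comes first lies in $\boldsymbol{Q}(R)$ — indeed $x\leadsto(\text{that reordering})$. Applying the 4-values condition (in the form recorded in the Note after Definition \ref{defin:4val}) to this quadruple produces a $y\in R$ witnessing the ``swapped'' metric triangles, which upon translating back is exactly the edge length $\de_\mathrm{C}(p,q)=y$ making $\amalg_R(\mathrm{A},\mathrm{B})$ nonempty. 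Conversely, given a quadruple $(a,b,c,d)\in\boldsymbol{Q}(R)$ with witness $x\leadsto(a,b,c,d)$, so $a\geq\max\{b,c,d\}$ and $(x,a,b)$, $(x,c,d)$ metric, build $\mathrm{A}=(\{p,v,w\};\de_\mathrm{A})$ with $\de_\mathrm{A}(v,w)=x$, $\de_\mathrm{A}(p,v)=a$, $\de_\mathrm{A}(p,w)=b$ and $\mathrm{B}=(\{q,v,w\};\de_\mathrm{B})$ with $\de_\mathrm{B}(v,w)=x$, $\de_\mathrm{B}(q,v)=c$, $\de_\mathrm{B}(q,w)=d$; these are genuine metric spaces with distances in $R$ precisely because the two triples are metric. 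By hypothesis $\amalg_R(\mathrm{A},\mathrm{B})$ contains some $\mathrm{C}$; reading off $\de_\mathrm{C}(p,q)=y\in R$ gives metric triples $(y,a,c)$ and $(y,b,d)$, i.e. $y\leadsto(a,d,c,b)$ (here $a$ is still the max), so $(a,d,c,b)\in\boldsymbol{Q}(R)$, which is the 4-values condition.

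The one genuine subtlety — and the step I expect to cost the most care — is the reordering bookkeeping in the definition of $x\leadsto(a,b,c,d)$, which insists the \emph{first} entry dominates the other three. In the forward direction the given $\mathrm{A},\mathrm{B}$ need not have $\de_\mathrm{A}(p,v)$ maximal among the four cross-distances, so one must first check that the condition ``$(x,a,b)$ and $(x,c,d)$ metric'' is symmetric enough that some permutation putting the maximum first lands in $\boldsymbol{Q}(R)$, then verify that applying the 4-values condition to that permutation and translating back yields a valid value $y$ regardless of which of the four distances was largest. This amounts to observing that the relation ``there is $y\in R$ with $(y,a,c)$ and $(y,b,d)$ both metric'' is invariant under the simultaneous swaps $a\leftrightarrow b, c\leftrightarrow d$ (relabel $v\leftrightarrow w$) and $a\leftrightarrow c, b\leftrightarrow d$ (relabel $p\leftrightarrow q$, which is exactly the 4-values swap), so all the case distinctions collapse. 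Everything else is the routine check that ``triple is metric'' is equivalent to ``$|{\cdot}-{\cdot}|\le{\cdot}\le{\cdot}+{\cdot}$'' and that a three-point premetric space with a metric distance triple is a metric space; Lemma \ref{lem:triv4} is available if any auxiliary inequality among $a,b,c,d$ is needed.
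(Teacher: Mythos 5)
Your proposal is correct and follows essentially the same translation as the paper's proof; the paper simply builds the reordering bookkeeping into its labels by setting $c=\de_\mathrm{B}(q,w)$ and $d=\de_\mathrm{B}(q,v)$, so that the 4-values swap $(a,b,c,d)\mapsto(a,d,c,b)$ directly yields the two cross triangles $(y,a,d)$ on $\{p,q,v\}$ and $(y,b,c)$ on $\{p,q,w\}$. With your labels $c=\de_\mathrm{B}(q,v)$, $d=\de_\mathrm{B}(q,w)$ one must first swap the second pair before applying the condition (harmless, as you note, since $\boldsymbol{Q}(R)$ is insensitive to the order within each pair); apart from that bookkeeping, which you correctly flag, the two arguments coincide.
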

\begin{proof}
Let $R$ satisfy the 4-values condition and assume that the spaces $\mathrm{A}$ and $\mathrm{B}$ with $x=\de_\mathrm{A}(v,w)=\de_\mathrm{B}(v,w)$ are given. Let:
\begin{align}\label{cond:4val}
a=\de_\mathrm{A}(p,v)\, \, \, \, \,  b=\de_\mathrm{A}(p,w)\, \, \, \, \,  c=\de_\mathrm{B}(q,w)\, \, \, \, \,  d=\de_\mathrm{B}(q,v).
\end{align}
Then $x\leadsto(a,b,c,d)$. Hence there is a number $y\in R$ with $y\leadsto (a,d,c,b)$ implying that the space $\mathrm{C}=(\{p,q,v,w\}; \de_\mathrm{C})\in \amalg_R(\mathrm{A},\mathrm{B})$ with $\de_{\mathrm{C}}(p,q)=y$. 

For the other direction of the proof let $x,a,b,c,d\in R$ and $x\leadsto (a,b,c,d)$. Let then $\mathrm{A}=(\{p,v,w\}; \de_\mathrm{A})$ and $\mathrm{B}=(\{q,v,w\};\de_\mathrm{B})$ be metric spaces with distances  as in (\ref{cond:4val}) and with $x=\de_\mathrm{A}(v,w)=\de_\mathrm{B}(v,w)$. Let $\mathrm{C}=(\{p,q,v,w\}; \de_\mathrm{C})\in \amalg_R(\mathrm{A},\mathrm{B})$. Then $R\ni y\leadsto (a,d,c,b)$ for $y=\de_\mathrm{C}(p,q)$.
 \end{proof}
 
 \begin{defin}\label{defin:interval}
 For two metric spaces $\mathrm{A}=(\{p,v,w\};\de_\mathrm{A})$ and $\mathrm{B}=(\{q,v,w\};\de_\mathrm{B})$ with distances as in (\ref{cond:4val})  let: 
 \[
 \mathfrak{u}(\mathrm{A},\mathrm{B})=\max\{|a-d|,|b-c|\}\, \, \, \, \, \, \, \mathfrak{l}(\mathrm{A},\mathrm{B})=\min\{a+d,b+c\}
 \]
 \end{defin}

 \begin{lem}\label{lem:interval}
 Let $\mathrm{A}=(\{p,v,w\};\de_\mathrm{A})$ and $\mathrm{B}=(\{q,v,w\};\de_\mathrm{B})$ be two metric spaces with $\de_\mathrm{A}(v,w)=\de_\mathrm{B}(v,w)$, then $\mathfrak{u}(\mathrm{A},\mathrm{B})\leq \mathfrak{l}(\mathrm{A},\mathrm{B})$ and 
\[
[\mathfrak{u}(\mathrm{A},\mathrm{B}), \mathfrak{l}(\mathrm{A},\mathrm{B})]=\{\de_\mathrm{C}(p,q)\mid \mathrm{C}\in \amalg(\mathrm{A},\mathrm{B})\}.
\] 
Let $R\subseteq \Re_{\geq 0}$ satisfy the 4-values condition and  $\dist(\mathrm{A})\cup \dist(\mathrm{B})\subseteq R$. Then there exists a number $y\in R\cap[\mathfrak{u}(\mathrm{A},\mathrm{B}), \mathfrak{l}(\mathrm{A},\mathrm{B})]$.
\end{lem}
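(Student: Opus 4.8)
The statement breaks into three parts, which I would prove in the displayed order: (a) $\mathfrak{u}(\mathrm{A},\mathrm{B}) \leq \mathfrak{l}(\mathrm{A},\mathrm{B})$; (b) the interval $[\mathfrak{u}(\mathrm{A},\mathrm{B}),\mathfrak{l}(\mathrm{A},\mathrm{B})]$ is exactly the set of values $\de_\mathrm{C}(p,q)$ realized by amalgams $\mathrm{C} \in \amalg(\mathrm{A},\mathrm{B})$; and (c) when $R$ satisfies the $4$-values condition and $\dist(\mathrm{A}) \cup \dist(\mathrm{B}) \subseteq R$, this interval meets $R$. Parts (a) and (b) are elementary triangle-inequality computations on the four-point set $\{p,q,v,w\}$, and part (c) I would derive from Lemma~\ref{lem:4val}.

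Write $a = \de_\mathrm{A}(p,v)$, $b = \de_\mathrm{A}(p,w)$, $c = \de_\mathrm{B}(q,w)$, $d = \de_\mathrm{B}(q,v)$ and $x = \de_\mathrm{A}(v,w) = \de_\mathrm{B}(v,w)$, as in~(\ref{cond:4val}). Since $\mathrm{A}$ and $\mathrm{B}$ are metric, $|a-b| \leq x \leq a+b$ and $|c-d| \leq x \leq c+d$; chaining these (the argument proving Lemma~\ref{lem:triv4}, which uses only that $(x,a,b)$ and $(x,c,d)$ are metric triples, not the maximality clause of $\leadsto$) yields $|a-d| \leq b+c$ and $|b-c| \leq a+d$, and together with the trivial $|a-d| \leq a+d$, $|b-c| \leq b+c$ this is exactly $\max\{|a-d|,|b-c|\} \leq \min\{a+d,b+c\}$, proving (a). For (b), a member $\mathrm{C}$ of $\amalg(\mathrm{A},\mathrm{B})$ is a metric space on $\{p,q,v,w\}$ restricting to $\mathrm{A}$ on $\{p,v,w\}$ and to $\mathrm{B}$ on $\{q,v,w\}$ (compatible because $\de_\mathrm{A}(v,w) = \de_\mathrm{B}(v,w)$), hence determined by the single value $y := \de_\mathrm{C}(p,q)$. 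Of the four triangles on $\{p,q,v,w\}$, $\{p,v,w\}$ and $\{q,v,w\}$ are metric since they are $\mathrm{A}$ and $\mathrm{B}$; $\{p,v,q\}$ is metric exactly when $|a-d| \leq y \leq a+d$ and $\{p,w,q\}$ exactly when $|b-c| \leq y \leq b+c$. So such a $\mathrm{C}$ with $\de_\mathrm{C}(p,q)=y$ exists iff $y \in [\mathfrak{u}(\mathrm{A},\mathrm{B}),\mathfrak{l}(\mathrm{A},\mathrm{B})]$, proving (b). (If $\mathfrak{u}(\mathrm{A},\mathrm{B})=0$ one reads the left endpoint as excluded, to keep $p \neq q$ at positive distance; this does not affect later uses.)

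For (c), $x = \de_\mathrm{A}(v,w) \in \dist(\mathrm{A}) \subseteq R$, so $(\mathrm{A},\mathrm{B})$ satisfies the hypotheses of Lemma~\ref{lem:4val}; as $R$ satisfies the $4$-values condition, that lemma gives a $\mathrm{C} \in \amalg_R(\mathrm{A},\mathrm{B})$. Then $y = \de_\mathrm{C}(p,q) \in \dist(\mathrm{C}) \subseteq R$, and by (b) $y \in [\mathfrak{u}(\mathrm{A},\mathrm{B}),\mathfrak{l}(\mathrm{A},\mathrm{B})]$, so the interval meets $R$. There is no genuine obstacle here: the whole content is the triangle-inequality bookkeeping in (a) and (b). The one thing to get right is to read (c) off from Lemma~\ref{lem:4val} rather than redo the $4$-values manipulation; were one to do it directly from Definition~\ref{defin:4val}, the small subtlety is that $(a,b,c,d)$ must first be rewritten so that the largest of $a,b,c,d$ is in the leading coordinate, after which the $4$-values shuffle can be checked --- for each choice of which entry is largest --- to land on the pairs $\{a,d\}$ and $\{b,c\}$.
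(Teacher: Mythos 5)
Your proposal is correct and follows essentially the same route as the paper: the endpoint inequality via the chaining of the two triangle inequalities through $x$ (the content of Lemma~\ref{lem:triv4}), the observation that only the triangles $\{p,q,v\}$ and $\{p,q,w\}$ constrain $\de_\mathrm{C}(p,q)$, and part (c) read off directly from Lemma~\ref{lem:4val}. Your two added precisions --- that the chaining does not need the maximality clause of $\leadsto$, and the degenerate left endpoint when $\mathfrak{u}(\mathrm{A},\mathrm{B})=0$ --- are both correct and slightly more careful than the paper's own write-up.
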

 \begin{proof}
Let  the numbers $(a,b,c,d)$ be given by Condition (\ref{cond:4val}). 
 
The inequality $\mathfrak{u}(\mathrm{A},\mathrm{B})\leq \mathfrak{l}(\mathrm{A},\mathrm{B})$ follows from Lemma  \ref{lem:triv4}. If $y\in [\mathfrak{u}(\mathrm{A},\mathrm{B}), \mathfrak{l}(\mathrm{A},\mathrm{B})]$ then $|a-d|\leq y \leq a+d$ and $|b-c|\leq y\leq b+c$ and hence the triples $(y,a,d)$ and $(y,b,c)$ are metric. If $y=\de_\mathrm{C}(p,q)$ for  $\mathrm{C}\in \amalg(\mathrm{A},\mathrm{B}$, then the triples $(y,a,d)$ and $(y,b,c)$ are metric and hence $|a-d|\leq y \leq a+d$ and $|b-c|\leq y\leq b+c$. 

If $R$ satisfies the 4-values condition and  $\dist(\mathrm{A})\cup \dist(\mathrm{B})\subseteq R$ it follows from Lemma \ref{lem:4val} that  $ \amalg_R(\mathrm{A},\mathrm{B})\not=\emptyset$ and hence $\emptyset\not= \{\de_\mathrm{C}(p,q)\mid \mathrm{C}\in \amalg_R(\mathrm{A},\mathrm{B})\}=R\cap \{\de_\mathrm{C}(p,q)\mid \mathrm{C}\in \amalg(\mathrm{A},\mathrm{B})\}=R\cap [\mathfrak{u}(\mathrm{A},\mathrm{B}), \mathfrak{l}(\mathrm{A},\mathrm{B})]$.
 \end{proof}

\begin{lem}\label{lem:alamgam1}
Let the set  $R\subseteq \Re_{\geq 0}$ satisfy the 4-values condition and let  $\big(\mathrm{A}=(A;\de_\mathrm{A}),\mathrm{B}=(B;\de_\mathrm{B})\big)$ with $\dist(\mathrm{A})\cup \dist(\mathrm{B})\subseteq R$ be an amalgamation instance. Let $A\setminus B=\{p\}$ and $B\setminus A=\{q\}$. 
 
 Then $\amalg_R(\mathrm{A},\mathrm{B})\not=\emptyset$ if $A\cup B$ is finite or if $R$ is closed. 
 \end{lem}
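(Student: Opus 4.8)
The plan is to reduce the existence of an amalgam to a one–dimensional Helly question about closed intervals, and then to feed in the $4$-values condition through Lemma~\ref{lem:interval}, using finiteness in the first case and closedness in the second. Write $D=A\cap B$, so $A=D\cup\{p\}$ and $B=D\cup\{q\}$; we may assume $D$ has at least two elements, the cases $|D|\le 1$ being immediate (for $D=\{v\}$ the number $\max\{\de_{\mathrm{A}}(p,v),\de_{\mathrm{B}}(q,v)\}\in R$ does the job). A member $\mathrm{C}$ of $\amalg(\mathrm{A},\mathrm{B})$ is exactly a choice of a positive number $t=\de_{\mathrm{C}}(p,q)$, and it lies in $\amalg_{R}(\mathrm{A},\mathrm{B})$ precisely when $t\in R$, $t>0$ and, for every $w\in D$, the triple $(t,\de_{\mathrm{A}}(p,w),\de_{\mathrm{B}}(q,w))$ is metric; every other triangle of $\mathrm{C}$ already lies inside $\mathrm{A}$ or inside $\mathrm{B}$. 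Put $\ell_w=|\de_{\mathrm{A}}(p,w)-\de_{\mathrm{B}}(q,w)|$, $r_w=\de_{\mathrm{A}}(p,w)+\de_{\mathrm{B}}(q,w)$ and $I_w=[\ell_w,r_w]$; then the task is to show $R\cap\bigcap_{w\in D}I_w\neq\emptyset$. For distinct $v,w\in D$ the spaces $\restrict{\mathrm{A}}{\{p,v,w\}}$ and $\restrict{\mathrm{B}}{\{q,v,w\}}$ agree on the edge $\{v,w\}$ (since $(\mathrm{A},\mathrm{B})$ is an amalgamation instance) and have all their distances in $R$, so Lemma~\ref{lem:interval} yields a point of $R$ in $[\,\max\{\ell_v,\ell_w\},\min\{r_v,r_w\}\,]=I_v\cap I_w$. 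In particular the intervals $I_w$ ($w\in D$) pairwise intersect, hence have a common point, and $J:=\bigcap_{w\in D}I_w=[\mathfrak{u}^{\ast},\mathfrak{l}^{\ast}]$ is a non-empty closed interval with $\mathfrak{u}^{\ast}=\sup_{w\in D}\ell_w$ and $\mathfrak{l}^{\ast}=\inf_{w\in D}r_w<\infty$.

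If $A\cup B$ is finite, then so is $D$, so the supremum and infimum above are attained, say $\mathfrak{u}^{\ast}=\ell_{v_{0}}$ and $\mathfrak{l}^{\ast}=r_{w_{0}}$, whence $J=I_{v_{0}}\cap I_{w_{0}}$. Now Lemma~\ref{lem:4val}, applied to $\restrict{\mathrm{A}}{\{p,v_{0},w_{0}\}}$ and $\restrict{\mathrm{B}}{\{q,v_{0},w_{0}\}}$, provides an actual four-point metric space with distances in $R$, whose $p,q$-distance $y$ therefore lies in $R$, is positive, and belongs to $I_{v_{0}}\cap I_{w_{0}}=J$; extending by $\de_{\mathrm{C}}(p,q)=y$ over $A\cup B$ produces the required $\mathrm{C}\in\amalg_{R}(\mathrm{A},\mathrm{B})$.

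If instead $R$ is closed, we pick sequences $(v_{n})$ and $(w_{n})$ in $D$ with $\ell_{v_{n}}\to\mathfrak{u}^{\ast}$ and $r_{w_{n}}\to\mathfrak{l}^{\ast}$, and for each $n$ choose $y_{n}\in R\cap I_{v_{n}}\cap I_{w_{n}}$ as above. Since $\ell_{v_{n}}\le y_{n}\le r_{w_{n}}$, the sequence $(y_{n})$ is bounded, so it has a convergent subsequence $y_{n_{k}}\to y^{\ast}$; closedness of $R$ gives $y^{\ast}\in R$, and passing to the limit in $\ell_{v_{n_{k}}}\le y_{n_{k}}\le r_{w_{n_{k}}}$ gives $\mathfrak{u}^{\ast}\le y^{\ast}\le\mathfrak{l}^{\ast}$, so $y^{\ast}\in R\cap J$. (If $\mathfrak{u}^{\ast}=0$, meaning $\de_{\mathrm{A}}(p,w)=\de_{\mathrm{B}}(q,w)$ for every $w\in D$, one chooses instead a suitable $y^{\ast}=\de_{\mathrm{A}}(p,w_{0})\in R\cap J$, which is positive.) Then $\de_{\mathrm{C}}(p,q)=y^{\ast}$ defines the desired member of $\amalg_{R}(\mathrm{A},\mathrm{B})$.

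The hard part is the passage from ``$R$ meets every pairwise intersection $I_v\cap I_w$'' to ``$R$ meets the full intersection $J$''. In the finite case this is for free, since $J$ literally coincides with one of those pairwise intersections; in the closed case it is exactly the boundedness-plus-closedness argument above, and that is the only place where one uses that $R$ is \emph{closed} rather than merely satisfying the $4$-values condition. Everything else — that the only genuine constraints on $t$ come from the triangles through $\{p,q\}$, that $\bigcap_{w\in D}I_w$ is the interval $[\mathfrak{u}^{\ast},\mathfrak{l}^{\ast}]$, and the triangle-inequality verifications for the resulting $\mathrm{C}$ — is routine.
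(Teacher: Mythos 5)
Your argument is correct and is essentially the paper's own proof: both reduce the problem to finding a point of $R$ in $\bigcap_{v,w}[\mathfrak{u}(\mathrm{A}_{v,w},\mathrm{B}_{v,w}),\mathfrak{l}(\mathrm{A}_{v,w},\mathrm{B}_{v,w})]=[\sup_w\ell_w,\inf_w r_w]$, settle the finite case by attaining the sup and inf at one pair $(v_0,w_0)$ and applying the 4-values condition there, and settle the closed case by approximating the sup and inf and taking a limit of points of $R$ (your subsequence argument is just the sequential form of the paper's $\epsilon$-argument). The only substantive addition is your explicit handling of positivity of $\de_{\mathrm{C}}(p,q)$ when $\mathfrak{u}^{\ast}=0$, a point the paper passes over silently.
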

 \begin{proof}
For $v,w\in A\cap B$ let $\mathrm{A}_{v,w}=\restrict{\mathrm{A}}{\{p,v,w\}}$ and $\mathrm{B}_{v,w}=\restrict{\mathrm{B}}{\{q,v,w\}}$. Note that $(\mathrm{A}_{v,w},\mathrm{B}_{v,w})$ is an amalgamation instance. We have to prove that there is a number $y\in R$ so that the premetric space $\mathrm{C}=(A \cup B; \de)$ with $\de(p,q)=y$ and $\de(p,v)=\de_\mathrm{A}(p,v)$ and $\de(q,v)=\de_\mathrm{B}(q,v)$ and $\de(v,w)=\de_\mathrm{A}(v,w)=\de_\mathrm{B}(v,w)$ for all $v,w\in A\cap B$ is a metric space. That is, for 
\[
\mathcal{S}:=\bigcap_{v,w\in A\cap B}[\mathfrak{u}(\mathrm{A}_{v,w},\mathrm{B}_{v,w}), \mathfrak{l}(\mathrm{A}_{v,w},\mathrm{B}_{v,w})],
\]
we have to prove, according to  Lemma \ref{lem:interval}, that $\mathcal{S}\cap R\not=\emptyset$. Let 
\[
\hat{\mathfrak{u}}:=\sup\{|\de_\mathrm{A}(p,v)-\de_\mathrm{B}(q,v)| \mid v\in A\cap B\},\hskip 4 pt \hat{\mathfrak{l}}:=\inf\{ \de_\mathrm{A}(p,v)+\de_\mathrm{B}(q,v)\}.
\]
Then $\hat{\mathfrak{u}}\leq \hat{\mathfrak{l}}$  according to Lemma \ref{lem:triv4} and $[\hat{\mathfrak{u}},\hat{\mathfrak{l}}]\subseteq S$ according to Definition \ref{defin:interval}. 

Let  $R$ be  closed. There exists for every $\epsilon>0$ a point $v\in A\cap B$ with $\hat{\mathfrak{u}}-\epsilon<|\de_\mathrm{A}(p,v)-\de_\mathrm{B}(q,v)|\leq \hat{\mathfrak{u}}$ and a point $w\in A\cap B$ with $\hat{\mathfrak{l}}\leq \de_\mathrm{A}(p,w)+\de_\mathrm{B}(q,w)\leq \hat{\mathfrak{l}}+\epsilon$. Then:
\begin{align*}
&\hat{\mathfrak{u}}-\epsilon<|\de_\mathrm{A}(p,v)-\de_\mathrm{B}(q,v)|\leq\mathfrak{u}(\mathrm{A}_{v,w},\mathrm{B}_{v,w})\leq \hat{\mathfrak{u}}\leq \\
&\leq\hat{\mathfrak{l}} \leq \de_\mathrm{A}(p,w)+\de_\mathrm{B}(q,w)\leq \mathfrak{l}(\mathrm{A}_{v,w},\mathrm{B}_{v,w})<\hat{\mathfrak{l}}+\epsilon.
\end{align*}
Because $R$ satisfies the 4-values condition it follows that for every $\epsilon>0$ there exists a number $y_\epsilon\in R$ with 
\[
\hat{\mathfrak{u}}-\epsilon<\mathfrak{u}(\mathrm{A}_{v,w})\leq y_\epsilon\leq \mathfrak{l}(\mathrm{A}_{v,w},\mathrm{B}_{v,w}) <\hat{\mathfrak{l}}+\epsilon.
\]
Hence, because $R$ is closed, there exists a number $y\in R\cap[\hat{\mathfrak{u}},\hat{\mathfrak{l}}]\subseteq R\cap S$.

If $A\cup B$ is finite let $v\in A\cap B$ such that $\hat{\mathfrak{u}}=|\de_\mathrm{A}(p,v)-\de_\mathrm{B}(q,v)|$ and $w\in A\cap B$ such that $\hat{\mathfrak{l}}=\de_\mathrm{A}(p,w)=\de_\mathrm{B}(q,w)$. Then:
\[
\hat{\mathfrak{u}}=\mathfrak{u}(\mathrm{A}_{v,w},\mathrm{B}_{v,w})\leq \mathfrak{l}(\mathrm{A}_{v,w},\mathrm{B}_{v,w})=\hat{\mathfrak{l}}.
\]
Because $R$ satisfies the 4-values condition there exists a number $y\in R\cap [\hat{\mathfrak{u}},\hat{\mathfrak{l}}]\subseteq R\cap \mathcal{S}$. 
 \end{proof}

\begin{lem}\label{lem:alamgam2} 
Let the set  $R\subseteq \Re_{\geq 0}$ satisfy the 4-values condition and let  $\big(\mathrm{A}=(A;\de_\mathrm{A}),\mathrm{B}=(B;\de_\mathrm{B})\big)$ with $\dist(\mathrm{A})\cup \dist(\mathrm{B})\subseteq R$ be an amalgamation instance. Let $A\setminus B=\{p\}$. 
 
 Then $\amalg_R(\mathrm{A},\mathrm{B})\not=\emptyset$ if $A\cup B$ is finite or if $R$ is closed. 
\end{lem}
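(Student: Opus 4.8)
The plan is to reduce Lemma~\ref{lem:alamgam2} to Lemma~\ref{lem:alamgam1} by handling the extra generality --- namely that $B\setminus A$ may now contain many points, or be infinite --- via a one-point-at-a-time amalgamation that is iterated (possibly transfinitely).  The key observation is that in Lemma~\ref{lem:alamgam1} we already know how to amalgamate when \emph{both} of $A\setminus B$ and $B\setminus A$ are singletons; here only $A\setminus B=\{p\}$ is assumed to be a singleton, so we must adjoin the point $p$ to $\mathrm{B}$ ``over'' the common part $A\cap B$, and the points of $B\setminus A$ play the role of obstacles that, together with $A\cap B$, constrain the allowable distances $\de(p,x)$ for $x\in B\setminus A$.

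First I would set $C=A\cap B$ and note that $\restrict{\mathrm{A}}{C}=\restrict{\mathrm{B}}{C}$ since $(\mathrm{A},\mathrm{B})$ is an amalgamation instance.  If $B\setminus A$ is finite, enumerate it as $q_1,\dots,q_k$ and build the amalgam by adding the distances $\de(p,q_i)$ one index at a time: having already fixed $\de(p,x)$ for all $x\in C\cup\{q_1,\dots,q_{i-1}\}$, apply Lemma~\ref{lem:alamgam1} to the pair $\bigl(\mathrm{A}_{i-1},\restrict{\mathrm{B}}{(C\cup\{q_1,\dots,q_i\})}\bigr)$, where $\mathrm{A}_{i-1}$ is the metric space already constructed on $C\cup\{p,q_1,\dots,q_{i-1}\}$; this is legitimate because the new set difference on the left is $\{p\}$ and on the right is $\{q_i\}$, both singletons, so Lemma~\ref{lem:alamgam1} (finite case) applies and keeps all distances in $R$.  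After $k$ steps we have a metric space on $A\cup B$ with distances in $R$ restricting to $\mathrm{A}$ and to $\mathrm{B}$, i.e.\ an element of $\amalg_R(\mathrm{A},\mathrm{B})$.  When $A\cup B$ is infinite but $R$ is closed, I would run the same construction by transfinite recursion along a well-ordering of $B\setminus A$, at each step invoking the closed-$R$ case of Lemma~\ref{lem:alamgam1}; the resulting distance function $\de(p,\cdot)$ on $A\cup B$ is consistent by construction, and one checks directly that the only triangle inequalities that could fail are those involving $p$ and two other points, each of which was validated at the stage where the later of the two points was added.

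The main obstacle I anticipate is making the transfinite iteration genuinely work, because Lemma~\ref{lem:alamgam1} as stated amalgamates only a pair whose \emph{total} underlying set is finite (in the non-closed case) or whose ambient $R$ is closed --- so in the infinite case I must stay inside the ``$R$ closed'' hypothesis throughout and, more subtly, ensure that the partial metric space $\mathrm{A}_\alpha$ built at stage $\alpha$ really is a metric space to which the lemma can be applied.  The cleanest way to secure this is to verify the invariant ``$\mathrm{A}_\alpha\in\amalg_R$ of $\mathrm{A}$ with $\restrict{\mathrm{B}}{(C\cup\{q_\beta:\beta<\alpha\})}$'' is preserved at successor stages by Lemma~\ref{lem:alamgam1} and at limit stages simply by taking unions (a union of an increasing chain of metric spaces is a metric space, since each triangle inequality involves only three points and hence lives in some earlier stage).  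Alternatively, and perhaps more in the spirit of the paper, one can avoid recursion entirely: define for each $q\in B\setminus A$ the interval $I_q=\bigcap_{v,w\in C\cup(B\setminus A)}[\mathfrak{u}(\cdot),\mathfrak{l}(\cdot)]$ of admissible values for $\de(p,q)$ exactly as in the proof of Lemma~\ref{lem:alamgam1}, and observe that the compatibility constraints among the various $\de(p,q)$ reduce, via the 4-values condition applied pairwise, to nonemptiness of a single intersection $\mathcal{S}\cap R$; but this multi-point version of the $\hat{\mathfrak{u}}\le\hat{\mathfrak{l}}$ computation is exactly what the iterative argument packages more transparently, so I would present the iteration.

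I would close by remarking that the finite case of the lemma is the special instance needed for verifying the amalgamation property of the relevant \Fra class, while the closed case feeds into the general amalgamation theorem (Theorem~\ref{thm:alamgam3}) used for the uncountable Urysohn-type constructions, so both halves of the statement are used downstream.
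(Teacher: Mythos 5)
Your proposal is correct and is essentially the paper's argument: both reduce to Lemma~\ref{lem:alamgam1} by adjoining the points of $B\setminus A$ one at a time (each step being an amalgamation instance whose two set differences are the singletons $\{p\}$ and $\{q_i\}$), with the finite case of Lemma~\ref{lem:alamgam1} used when $A\cup B$ is finite and the closed case used otherwise. The only cosmetic difference is that the paper packages the iteration as a Zorn's lemma argument on the poset of partial amalgams (unions of chains giving upper bounds) rather than as an explicit transfinite recursion with unions at limit stages.
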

\begin{proof}
Note that for $A\cap B\subseteq C\subseteq B$ and $\mathrm{C}=\restrict{\mathrm{B}}{C}$ the pair $(\mathrm{A},\mathrm{C})$ is an amalgamation instance. Let:
\[
\mathcal{M}=\bigcup_{A\cap B\subseteq C\subseteq B}\amalg_R(\mathrm{A},\restrict{\mathrm{B}}{C}).
\]
Then $(\mathcal{M};\preceq)$ is a partial order for $\mathrm{L}=(L; \de)\preceq \mathrm{N}=(N;\de)$ if $L\subseteq N$ and $\restrict{\mathrm{N}}{L}=\mathrm{L}$. Every chain in the partial order $(\mathcal{M};\preceq)$ has an upper bound and hence using Zorn's Lemma the partial order $(\mathcal{M};\preceq)$ has a maximal element $\mathrm{M}=(M;\de_\mathrm{M})$. If $M=A\cup B$ then $\mathrm{M}\in \amalg_R(\mathrm{A},\mathrm{B})$. Otherwise let $b\in (A\cup B)\setminus M$ and let $D=(M\setminus\{p\})\cup \{b\}$ and $\mathrm{D}=\restrict{\mathrm{B}}{D}$. Lemma \ref{lem:alamgam1} applied to the amalgamation instance $(\mathrm{M},\mathrm{D})$ results in a metric space contradicting the maximality of $\mathrm{M}$.
\end{proof}

\begin{thm}\label{thm:alamgam3} 
Let the set  $R\subseteq \Re_{\geq 0}$ satisfy the 4-values condition and let  $\big(\mathrm{A}=(A;\de_\mathrm{A}),\mathrm{B}=(B;\de_\mathrm{B})\big)$ with $\dist(\mathrm{A})\cup \dist(\mathrm{B})\subseteq R$ be an amalgamation instance.
 
 Then $\amalg_R(\mathrm{A},\mathrm{B})\not=\emptyset$ if $A\cup B$ is finite or if $R$ is closed. 
\end{thm}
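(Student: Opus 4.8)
The plan is to reduce the general amalgamation instance to the two special cases already handled in Lemma~\ref{lem:alamgam1} (where $|A\setminus B|=|B\setminus A|=1$) and Lemma~\ref{lem:alamgam2} (where $|A\setminus B|=1$ but $B\setminus A$ is arbitrary). The natural device is a Zorn's Lemma / maximality argument that enlarges $\mathrm{A}$ one point of $B\setminus A$ at a time, exactly mirroring the passage from Lemma~\ref{lem:alamgam1} to Lemma~\ref{lem:alamgam2}, but now iterated on the other side as well. Concretely, fix the amalgamation instance $(\mathrm{A},\mathrm{B})$ with $\dist(\mathrm{A})\cup\dist(\mathrm{B})\subseteq R$, and consider the family
\[
\mathcal{M}=\bigcup_{A\cap B\subseteq C\subseteq A}\amalg_R(\restrict{\mathrm{A}}{C},\mathrm{B}),
\]
partially ordered by $\mathrm{L}\preceq\mathrm{N}$ iff $L\subseteq N$ and $\restrict{\mathrm{N}}{L}=\mathrm{L}$. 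Note $\mathcal{M}$ is nonempty since $\mathrm{B}\in\amalg_R(\restrict{\mathrm{A}}{A\cap B},\mathrm{B})$.

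First I would check that every chain in $(\mathcal{M};\preceq)$ has an upper bound: the union of a chain is a premetric space whose distances lie in $R$, and it is a metric space because the triangle inequality is a statement about triples, each of which already sits inside some member of the chain; moreover it restricts correctly to $\mathrm{B}$ and to the appropriate $\restrict{\mathrm{A}}{C}$. Zorn's Lemma then yields a maximal element $\mathrm{M}=(M;\de_\mathrm{M})\in\amalg_R(\restrict{\mathrm{A}}{C_0},\mathrm{B})$ for some $A\cap B\subseteq C_0\subseteq A$, so that $M=C_0\cup B$. If $C_0=A$ we are done, since then $\mathrm{M}\in\amalg_R(\mathrm{A},\mathrm{B})$. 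Otherwise pick $p\in A\setminus C_0$, set $D=(M\setminus(B\setminus A))\cup\{p\}=C_0\cup(A\cap B)\cup\{p\}$ — wait, more carefully $D=C_0\cup\{p\}$ together with $A\cap B$, which is a subset of $A$, so $\mathrm{D}:=\restrict{\mathrm{A}}{D}$ is a metric space with distances in $R$, and $(\mathrm{M},\mathrm{D})$ is an amalgamation instance with $\mathrm{M}\setminus\mathrm{D}=B\setminus A$ (arbitrary) and $\mathrm{D}\setminus\mathrm{M}=\{p\}$ a single point. Applying Lemma~\ref{lem:alamgam2} to $(\mathrm{D},\mathrm{M})$ (with $\mathrm{D}$ playing the role of the space with a single extra point) produces a metric space in $\amalg_R(\mathrm{D},\mathrm{M})$ with distances in $R$, which extends $\mathrm{M}$ and lies in $\mathcal{M}$, contradicting maximality. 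This requires that the hypothesis of Lemma~\ref{lem:alamgam2} applies: namely that $M\cup D$ is finite, or $R$ is closed — both of which follow from the corresponding hypothesis on $A\cup B$ in the theorem, since $M\cup D\subseteq A\cup B$ is finite whenever $A\cup B$ is.

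The only genuinely delicate point, and the step I expect to be the main obstacle, is the chain-union verification together with getting the bookkeeping of which coordinates are "new" exactly right, so that each invocation of Lemma~\ref{lem:alamgam1} or Lemma~\ref{lem:alamgam2} is legitimate — in particular that after adding $p$ the new point is the \emph{only} new point relative to $\mathrm{M}$, and that $\mathrm{D}$ really is an induced subspace of $\mathrm{A}$ so its distances lie in $R$. In the finite case one could alternatively dispense with Zorn and simply induct on $|A\setminus B|$, using Lemma~\ref{lem:alamgam2} for the inductive step; the Zorn formulation is needed only to cover arbitrary cardinality when $R$ is closed. Either way the argument is short once the two lemmas are in hand.
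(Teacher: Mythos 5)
Your proposal is correct and is essentially the paper's own argument: the paper runs the identical Zorn's Lemma construction, only growing $B$ over a fixed $\mathrm{A}$ via $\mathcal{M}=\bigcup_{A\cap B\subseteq C\subseteq B}\amalg_R(\mathrm{A},\restrict{\mathrm{B}}{C})$ rather than growing $A$ over a fixed $\mathrm{B}$, and then applies Lemma~\ref{lem:alamgam2} to the one-new-point instance at a maximal element exactly as you do. Your extra care about the chain-union verification and about which space carries the single new point (so that Lemma~\ref{lem:alamgam2} literally applies) is sound and fills in details the paper leaves implicit.
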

\begin{proof}
Let:
\[
\mathcal{M}=\bigcup_{A\cap B\subseteq C\subseteq B}\amalg_R(\mathrm{A},\restrict{\mathrm{B}}{C}).
\]
Then $(\mathcal{M};\preceq)$ is a partial order for $\mathrm{L}=(L; \de)\preceq \mathrm{N}=(N;\de)$ if $L\subseteq N$ and $\restrict{\mathrm{N}}{L}=\mathrm{L}$. Every chain in the partial order $(\mathcal{M};\preceq)$ has an upper bound and hence using Zorn's Lemma the partial order $(\mathcal{M};\preceq)$ has a maximal element $\mathrm{M}=(M;\de_\mathrm{M})$. If $M=A\cup B$ then $\mathrm{M}\in \amalg_R(\mathrm{A},\mathrm{B})$. Otherwise let $b\in (A\cup B)\setminus M$ and let $D=(M\cap B)\cup \{b\}$ and $\mathrm{D}=\restrict{\mathrm{B}}{D}$. Lemma \ref{lem:alamgam2} applied to the amalgamation instance $(\mathrm{M},\mathrm{D})$ results in a metric space contradicting the maximality of $\mathrm{M}$.
\end{proof}

\begin{thm}\label{thm:4nec}
The set of distances of a universal metric space satisfies the 4-values condition. 
\end{thm}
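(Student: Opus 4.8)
The plan is to show that $\boldsymbol{Q}(R)$ is closed under the coordinate swap $(a,b,c,d)\mapsto(a,d,c,b)$ by using the universality and homogeneity of a universal metric space $\mathrm{M}$ with $\dist(\mathrm{M})=R$, together with Lemma~\ref{lem:4val}, which translates the $4$-values condition into the statement that every amalgamation instance of two three-point spaces sharing an edge has an amalgam with distances in $R$. So the real task is: given three-point metric spaces $\mathrm{A}=(\{p,v,w\};\de_\mathrm{A})$ and $\mathrm{B}=(\{q,v,w\};\de_\mathrm{B})$ with distances in $R$ and $\de_\mathrm{A}(v,w)=\de_\mathrm{B}(v,w)=x\in R$, produce a metric space $\mathrm{C}$ on $\{p,q,v,w\}$ with distances in $R$ restricting to $\mathrm{A}$ on $\{p,v,w\}$ and to $\mathrm{B}$ on $\{q,v,w\}$.

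First I would use universality of $\mathrm{M}$ to embed $\mathrm{A}$ isometrically into $\mathrm{M}$, identifying $p,v,w$ with points of $M$; call the images $p,v,w$ again. Then I would build, inside $\mathrm{M}$, a copy of the point $q$ bearing the right distances to $v$ and $w$. For this, consider the type function $\mathfrak{k}$ on $\{v,w\}$ given by $\mathfrak{k}(v)=\de_\mathrm{B}(q,v)$ and $\mathfrak{k}(w)=\de_\mathrm{B}(q,w)$. Since $\de_\mathrm{A}(v,w)=x=\de_\mathrm{B}(v,w)$, the space $\Sp(\mathfrak{k})$ is exactly $\mathrm{B}$ up to relabeling, hence a metric space, and $\dist(\mathfrak{k})\subseteq \dist(\mathrm{B})\subseteq R=\dist(\mathrm{M})$; so $\mathfrak{k}$ is a restricted type function of $\mathrm{M}$. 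By Lemma~\ref{lem:realin4} (restated in Theorem~\ref{thm:realint}), since $\mathrm{M}$ is universal, $\mathfrak{k}$ has a realization $q\in M$. Now $\restrict{\mathrm{M}}{\{p,q,v,w\}}$ is a metric space with distances in $R$ whose restriction to $\{p,v,w\}$ is $\mathrm{A}$ and whose restriction to $\{q,v,w\}$ is $\mathrm{B}$; i.e. it lies in $\amalg_R(\mathrm{A},\mathrm{B})$, which is therefore nonempty. By Lemma~\ref{lem:4val} this is precisely the $4$-values condition for $R$.

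Actually the above already finishes the proof, so the argument is short; the point is to invoke the right prior results in the right order. The one subtlety, which I expect to be the only real obstacle, is making sure the type-function machinery applies cleanly: one must check that $\Sp(\mathfrak{k})$ really is a \emph{metric} space (this is where the hypothesis $\de_\mathrm{A}(v,w)=\de_\mathrm{B}(v,w)$ is used, via the triangle-inequality characterization in display~(1) applied to $\mathrm{B}$), and that a realization of $\mathfrak{k}$ is genuinely a \emph{new} point distinct from $p,v,w$ — if the realization happens to coincide with $p$, then $\de_\mathrm{A}(p,v)=\de_\mathrm{B}(q,v)$ and $\de_\mathrm{A}(p,w)=\de_\mathrm{B}(q,w)$, and one takes $\de_\mathrm{C}(p,q)=0$, which is fine since $0\in R$ always; alternatively, one can simply allow $\amalg_R$ to include degenerate amalgams, or note the realization can be taken distinct since the typeset $\tset(\mathfrak{k})$ of a realized type function in an infinite universal space is infinite. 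Either way the conclusion $\amalg_R(\mathrm{A},\mathrm{B})\neq\emptyset$ holds, and Lemma~\ref{lem:4val} delivers the $4$-values condition.
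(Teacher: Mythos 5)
Your proposal is correct and follows essentially the same route as the paper's own proof: embed $\mathrm{A}$ into the universal space, realize the restricted type function $\mathfrak{t}$ on $\{v,w\}$ given by the distances from $q$ in $\mathrm{B}$ (Lemma~\ref{lem:realin4}), read off the induced four-point subspace as an element of $\amalg_R(\mathrm{A},\mathrm{B})$, and conclude via Lemma~\ref{lem:4val}. The degenerate case you flag (the realization coinciding with $p$) is harmless exactly as you say, since $0\in R$ and $0\leadsto(a,d,c,b)$ whenever the corresponding distances agree.
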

\begin{proof}
Let $\mathrm{M}=(M;\de)$ be a universal metric space with $R=\dist(\mathrm{M})$. Let $\mathrm{A}=(\{p,v,w\};\de_\mathrm{A})$ and $\mathrm{B}=(\{q,v,w\};\de_\mathrm{B})$ with $\dist(\mathrm{A})\subseteq R$ and $\dist(\mathrm{B})\subseteq R$ and $R\ni x=\de_\mathrm{A}(v,w)=\de_\mathrm{B}(v,w)$. There exists an isometric copy with points $\{p',v',w'\}\subseteq M$ in $\mathrm{M}$. Let $\mathfrak{t}$ be the restricted type function with $\dom(\mathfrak{t})=\{v',w'\}$ and with $\mathfrak{t}(v')=\de_\mathrm{B}(q,v)$ and $\mathfrak{t}(w')=\de_\mathrm{B}(q,w)$. Let $q'$ be a realization of $\mathfrak{t}$. Then the metric space $\mathrm{C}=(\{p,v,w,q\};\de_\mathrm{C})$ with $\restrict{\mathrm{C}}{\{p,v,w\}}=\mathrm{A}$ and $\restrict{\mathrm{C}}{\{q,v,w\}}=\mathrm{B}$ and $\de_\mathrm{C}(p,q)=\de(p',q')$ is a metric space in $\amalg_R(\mathrm{A},\mathrm{B})$. Hence the Theorem follows from Lemma \ref{lem:4val}.
\end{proof}

\begin{defin}\label{defin:finf}
Let $R\subseteq \Re_{\geq 0}$, then $\mathcal{F}_R$ is the class of finite metric spaces $\mathrm{M}$ with $\dist(\mathrm{M})\subseteq R$.
\end{defin}

\begin{thm}\label{thm:basic1}
Let $0\in R\subseteq \Re_{\geq 0}$ be a countable set of numbers which satisfies the 4-values condition. Then there exists a countable universal metric space $\UR$.

If there exists a countable universal metric space $\UR$ then $R$ satisfies the 4-values condition. 
\end{thm}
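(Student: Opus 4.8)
The second assertion is immediate: a countable universal metric space is in particular a universal metric space, so by Theorem \ref{thm:4nec} its distance set satisfies the 4-values condition. The content is therefore in the first assertion, for which the plan is to exhibit a \Fra class $\mathcal{A}$ with $\dist(\boldsymbol{U}_{\hskip-2pt\mathcal{A}})=R$ and then invoke Theorem \ref{thm:Fraisse} and Corollary \ref{cor:realin3}. The natural candidate is $\mathcal{A}=\mathcal{F}_R$, the class of all finite metric spaces with distances in $R$ (Definition \ref{defin:finf}).

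First I would check that $\mathcal{F}_R$ is an age of metric spaces in the sense of Definition \ref{defin:age}: it is trivially closed under isometric copies and under taking subspaces, and it is countable because $R$ is countable and a finite metric space on $n$ points is determined by finitely many distances drawn from $R$. Updirectedness will follow from amalgamation once that is established (or can be seen directly by a disjoint-union-type argument, though one must be slightly careful since the disjoint union of two metric spaces need not have its cross-distances in $R$; the clean route is to get it as a consequence of the amalgamation step applied over a one-point or two-point common part, or to first note $R\neq\emptyset$ gives a common one-point space). The crucial step is closure under amalgamation: given an amalgamation instance $(\mathrm{A},\mathrm{B})$ with $\mathrm{A},\mathrm{B}\in\mathcal{F}_R$, since $A\cup B$ is finite, Theorem \ref{thm:alamgam3} (whose finite case uses exactly that $R$ satisfies the 4-values condition) gives a metric space $\mathrm{C}\in\amalg_R(\mathrm{A},\mathrm{B})$, and $\mathrm{C}$ is finite with $\dist(\mathrm{C})\subseteq R$, hence $\mathrm{C}\in\mathcal{F}_R$. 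Thus $\mathcal{F}_R$ is a \Fra class.

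By Theorem \ref{thm:Fraisse} there is a unique countable homogeneous metric space $\boldsymbol{U}_{\hskip-2pt\mathcal{F}_R}$ with age exactly $\mathcal{F}_R$. Being homogeneous and having every finite metric space with distances in $R$ in its age, it isometrically embeds every such finite space, so it is universal in the sense of Definition \ref{defin:uni}. It remains to verify $\dist(\boldsymbol{U}_{\hskip-2pt\mathcal{F}_R})=R$: the inclusion $\dist(\boldsymbol{U}_{\hskip-2pt\mathcal{F}_R})\subseteq R$ holds because every two-point subspace lies in the age $\mathcal{F}_R$; conversely, given $r\in R$, the two-point metric space with distance $r$ belongs to $\mathcal{F}_R$ (here $0\in R$ is needed so that such spaces make sense, and one uses that any single element of $R$ together with $0$ forms a legitimate two-point space), hence embeds, so $r\in\dist(\boldsymbol{U}_{\hskip-2pt\mathcal{F}_R})$. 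This gives the desired $\UR:=\boldsymbol{U}_{\hskip-2pt\mathcal{F}_R}$.

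I expect the only genuine obstacle to be making sure all the hypotheses of Theorem \ref{thm:alamgam3} and Theorem \ref{thm:Fraisse} are literally met — in particular that $\mathcal{F}_R$ is updirected (most cleanly derived from the amalgamation property together with the existence of a one-point space) and that the ages really coincide rather than one merely containing the other; both are routine but should be stated. Everything else is a direct citation of the machinery already assembled in this section.
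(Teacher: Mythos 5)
Your proposal is correct and follows essentially the same route as the paper: identify $\mathcal{F}_R$ as a \Fra class using Theorem \ref{thm:alamgam3} for the (finite) amalgamation step, take its \Fra limit via Theorem \ref{thm:Fraisse}, and observe that the limit is the desired $\UR$, with the converse direction being an immediate citation of Theorem \ref{thm:4nec}. Your extra care about countability of isometry classes, updirectedness (via amalgamation over a small common part), and the verification that $\dist(\boldsymbol{U}_{\hskip-2pt\mathcal{F}_R})=R$ only spells out details the paper leaves implicit.
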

\begin{proof}
The class  $\mathcal{F}_R$ of finite metric spaces is closed under isometric copies and substructures and it follows from Theorem \ref{thm:alamgam3} and Definition \ref{defin:amalg} that $\emptyset\not=\amalg_R(\mathrm{A},\mathrm{B})\subseteq \mathcal{F}_R$ for all $\mathrm{A},\mathrm{B}\in \mathcal{F}_R$. Hence $\mathcal{F}_R$ is updirected and closed under amalgamation and hence a \Fra class. According to Theorem \ref{thm:Fraisse} there exists a countable homogeneous metric space $\boldsymbol{U}_{\hskip-2pt\mathcal{F}_R}$ whose age is equal to $\mathcal{F}_R$. It follows that $\boldsymbol{U}_{\hskip-2pt\mathcal{F}_R}$ is the countable universal metric space $\UR$. 
\end{proof}

\begin{lem}\label{lem:4valdenseR}
Let  $R\subseteq \Re_{\geq 0}$ be a set of  numbers which satisfies the 4-values condition. For every countable subset $T$ of $R$ exists a dense   countable subset $C\supseteq T$ of $R$ which satisfies the 4-values condition. 
\end{lem}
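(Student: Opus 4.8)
The plan is to build $C$ as an increasing union $C=\bigcup_{n\in\omega}C_n$ of countable subsets of $R$, starting from $C_0=T\cup(\text{a fixed countable dense subset of }R)$, so that density is guaranteed from the very first step and is preserved automatically under unions. The work is entirely in arranging that the limit $C$ satisfies the 4-values condition, i.e. that for every quadruple $(a,b,c,d)\in\boldsymbol{Q}(C)$ there is a witness $y\in C$ with $y\leadsto(a,d,c,b)$. The obstruction is that when I throw in a witness $x\in R$ for a quadruple in $C_n$ (to certify membership in $\boldsymbol{Q}(C)$) and a witness $y\in R$ for the permuted quadruple, these new numbers $x,y$ create new quadruples that themselves need witnesses, so I have to close off under this operation countably many times.

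Concretely, at stage $n$: the set $C_n$ is countable, so there are only countably many quadruples $(a,b,c,d)$ of elements of $C_n$, and only countably many of them lie in $\boldsymbol{Q}(R)$ with all four entries in $C_n$. For each such quadruple pick, using that $R$ satisfies the 4-values condition, a number $y_{(a,b,c,d)}\in R$ with $y_{(a,b,c,d)}\leadsto(a,d,c,b)$; also, for bookkeeping, whenever $(a,b,c,d)\in\boldsymbol{Q}(C_n)$ pick some witness $x_{(a,b,c,d)}\in R$ with $x_{(a,b,c,d)}\leadsto(a,b,c,d)$ (this is just to record that these quadruples genuinely sit in $\boldsymbol{Q}(C)$ once we only know $C\subseteq R$; in fact $\boldsymbol{Q}$ is monotone, $\boldsymbol{Q}(C_n)\subseteq\boldsymbol{Q}(R)$, so this witness already lies in $R$ and can simply be adjoined). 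Set
\[
C_{n+1}=C_n\cup\{\,y_{(a,b,c,d)}\mid (a,b,c,d)\in\boldsymbol{Q}(C_n)\,\}\cup\{\,x_{(a,b,c,d)}\mid (a,b,c,d)\in\boldsymbol{Q}(C_n)\,\}.
\]
Each $C_{n+1}$ is a countable subset of $R$ because there are only countably many relevant quadruples and we add one or two reals per quadruple. Put $C=\bigcup_{n\in\omega}C_n$; then $C$ is countable, $T\subseteq C_0\subseteq C$, and $C$ is dense in $R$ since $C_0$ already is.

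It remains to check the 4-values condition for $C$. Let $(a,b,c,d)\in\boldsymbol{Q}(C)$. Each of $a,b,c,d$ appears in some $C_{k_i}$, so by taking $n=\max_i k_i$ all four lie in $C_n$; and since $\boldsymbol{Q}$ is monotone under inclusion, $(a,b,c,d)\in\boldsymbol{Q}(C)\subseteq\boldsymbol{Q}(R)$, so the quadruple is one of those handled at stage $n$. Hence $y_{(a,b,c,d)}\in C_{n+1}\subseteq C$ and $y_{(a,b,c,d)}\leadsto(a,d,c,b)$, which says precisely $(a,d,c,b)\in\boldsymbol{Q}(C)$. Thus $C$ satisfies the 4-values condition. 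The one point to be careful about — and the only place a genuine argument is needed rather than bookkeeping — is the monotonicity observation $\boldsymbol{Q}(C')\subseteq\boldsymbol{Q}(R)$ for $C'\subseteq R$, which is immediate from Definition~\ref{defin:4valf} since a witness $x\in C'\subseteq R$ works equally well in $R$; everything else is the standard countable closing-off construction.
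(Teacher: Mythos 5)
Your proof is correct and is essentially the paper's own argument: start from $T$ together with a countable dense subset of $R$, and close off countably many times by adjoining, for each quadruple of current elements lying in $\boldsymbol{Q}(R)$, a witness $y\in R$ with $y\leadsto(a,d,c,b)$ supplied by the 4-values condition for $R$. The only cosmetic differences are your redundant bookkeeping of the witnesses $x$ (which, as you note, are already present in the current stage) and the explicit monotonicity remark $\boldsymbol{Q}(C')\subseteq\boldsymbol{Q}(R)$, both of which the paper leaves implicit.
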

\begin{proof}
Let $S\supseteq T$ be a countable dense subset of $R$. There are countably many instances of the form $x\leadsto (a,b,c,d)$  with numbers  in $S$. Because $R$ does satisfy the 4-values condition  there is a countably set $S'\subseteq R$ so that for all those quadruples there is a $y\in S'$ with $y\leadsto (a,d,c,b)$. Repeating this process countably often leads to a countable subset $C\subseteq R$ which satisfies the 4-values condition.
\end{proof}

In order to verify the 4-values condition the following Lemma is often useful.
\begin{lem}\label{lem:verify4}
If   $a\leq b+c$ or $a\leq b+d$ or $a\leq c+d$  and   $(a,b,c,d)\in \boldsymbol{Q}(R)$ then  there exists $y\in \{a,b,c,d\}$ with $y\leadsto (a,d,c,b)$.
\end{lem}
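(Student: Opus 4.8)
The statement to prove is Lemma~\ref{lem:verify4}: if at least one of $a\le b+c$, $a\le b+d$, $a\le c+d$ holds and $(a,b,c,d)\in\boldsymbol{Q}(R)$, then some $y\in\{a,b,c,d\}$ satisfies $y\leadsto(a,d,c,b)$. Recall $y\leadsto(a,d,c,b)$ means: the triples $(y,a,d)$ and $(y,c,b)$ are metric, and $a\ge\max\{d,c,b\}$. Since $(a,b,c,d)\in\boldsymbol{Q}(R)$, we already know $a\ge\max\{b,c,d\}$ (that is part of the definition of $\leadsto$ for the witness $x$), so the ordering condition $a\ge\max\{d,c,b\}$ is automatic and need not be re-checked. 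Thus the whole task reduces to: find $y\in\{a,b,c,d\}$ such that $|a-d|\le y\le a+d$ and $|c-b|\le y\le c+b$, i.e.\ $y\in[\mathfrak{u},\mathfrak{l}]$ where $\mathfrak{u}=\max\{|a-d|,|c-b|\}$ and $\mathfrak{l}=\min\{a+d,c+b\}$. By Lemma~\ref{lem:triv4} applied to the witness $x\leadsto(a,b,c,d)$ we know $|a-d|\le b+c$ and $|b-c|\le a+d$, so in particular $\mathfrak{u}\le\mathfrak{l}$; what is needed is that one of the four given numbers lands inside this interval.

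First I would dispose of the candidate $y=a$. Since $a\ge\max\{b,c,d\}$, we have $a\ge d$, hence $a\ge|a-d|$ and trivially $a\le a+d$; also $a\ge b$ and $a\ge c$ give $a\ge|b-c|$. So the only condition that can fail for $y=a$ is $a\le b+c$. Symmetrically, one checks $y=b$: we need $|a-d|\le b$, $b\le a+d$ (automatic since $a\ge b$), $|c-b|\le b$ i.e.\ $c\le 2b$... this gets messy, so instead I would organize the case analysis around which of the three hypotheses $a\le b+c$, $a\le b+d$, $a\le c+d$ is assumed. If $a\le b+c$ holds, take $y=a$: as just noted all conditions hold, done. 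This leaves the two cases $a\le b+d$ and $a\le c+d$ (we may assume $a>b+c$, else we are in the first case). These two are symmetric under swapping $b\leftrightarrow c$ (note $(a,d,c,b)$ vs.\ $(a,d,b,c)$ — actually one must check the target tuple transforms correctly; the pair of triples $(y,a,d)$, $(y,c,b)$ is symmetric in $b,c$, so yes the two cases are genuinely symmetric), so it suffices to handle $a\le c+d$ together with $a>b+c$.

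In the remaining case ($a\le c+d$, $a>b+c$), I expect the right candidate is $y=d$. Check $(y,a,d)=(d,a,d)$: need $|a-d|\le d$, i.e.\ $a\le 2d$, and $d\le a+d$ (trivial), $a\ge d$ (have it). For $(y,c,b)=(d,c,b)$: need $|c-b|\le d\le c+b$. The inequality $d\le c+b$: since $a\le c+d$ we get... hmm, that gives $a-c\le d$, not immediately $d\le b+c$. So $y=d$ may not always work, and I would fall back on testing $y=c$ as well. The combinatorial heart of the proof is a short exhaustive check: given $a>b+c$ and the metricity triples $(x,a,b),(x,c,d)$ metric plus $x\ge a$... wait, $a\ge\max\{b,c,d\}$ but $x$ need not dominate $a$. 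From $(x,a,b)$ metric: $a-b\le x\le a+b$; from $(x,c,d)$ metric: $|c-d|\le x\le c+d$. Combining, $a-b\le c+d$, i.e.\ $a\le b+c+d$, and $|c-d|\le a+b$. These, together with $a>b+c$ (so $d>a-b-c\ge$ something positive, meaning $d$ is ``large''), should force one of $b,c,d$ into $[\mathfrak{u},\mathfrak{l}]$.

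**Main obstacle.** The genuine difficulty is purely the finite case analysis: verifying that in the case $a>b+c$, the constraints extracted from the witness $x$ (namely $a-b\le x\le a+b$ and $|c-d|\le x\le c+d$, whence $a\le b+c+d$ and $|c-d|\le a+b$) guarantee that at least one of $d$, $c$ (I expect $d$ when $d\ge a-c$... rather, the split is likely on whether $a-b\le d$ or not) lies in $[\max\{a-d,|b-c|\},\min\{a+d,b+c\}]$. I would carry this out by fixing WLOG $c\le d$ (using the $b\leftrightarrow c$... no, that symmetry was already used; instead note $(d,c,b)$ metric needed is symmetric in $c,b$, so WLOG order $b,c$ — careful bookkeeping required here), then checking $y=d$: the binding constraints become $a-d\le d$ and $d\le b+c$ (the others being free), and arguing that if $d>b+c$ then $a>b+c$ combined with $a\le b+c+d$ still... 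Actually if $d\le b+c$ fails then one should switch to $y=c$ or reconsider. The cleanest route is probably: set $y=d$ if $a-b\le d$ (then also $a-d\le b\le\ldots$), else $a-b>d$ so $b$ is small, try $y=c$. I anticipate roughly four sub-cases, each a one-line inequality chase from the two metric triples of $x$; no ideas beyond careful arithmetic are needed. The statement deliberately excludes the hard direction (it assumes one of the three sub-triangle inequalities), so no appeal to the 4-values structure of $R$ itself is required — this is an elementary lemma whose only subtlety is not missing a case.
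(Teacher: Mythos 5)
Your reduction is right: since $(a,b,c,d)\in \boldsymbol{Q}(R)$ already gives $a\ge\max\{b,c,d\}$, the task is exactly to place some $y\in\{a,b,c,d\}$ in the interval $[\max\{|a-d|,|b-c|\},\min\{a+d,b+c\}]$, and your first case ($a\le b+c$, take $y=a$) is the paper's first case verbatim. But the other two cases are the actual content of the lemma, and you do not finish them; worse, the candidate you lean on is the wrong one. In the case $a\le c+d$ your tentative rule ``set $y=d$ if $a-b\le d$'' fails concretely: take $(a,b,c,d)=(10,1,1,10)$ with witness $x=10$, so that $(x,a,b)=(10,10,1)$ and $(x,c,d)=(10,1,10)$ are metric, $a\ge\max\{b,c,d\}$, and $a\le c+d=11$ while $a>b+c$. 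Here $a-b=9\le d=10$, so your rule picks $y=d=10$, but $(y,c,b)=(10,1,1)$ is not metric. The correct candidate is $y=c=1$, which does work. The inequalities you extract from the witness $x$ (such as $a\le b+c+d$ and $|c-d|\le a+b$) are also a red herring: nothing beyond $a\ge\max\{b,c,d\}$ is needed.

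The missing idea is the right dichotomy, and it is short. If $a\le b+d$, try $y=b$: the triple $(b,a,d)$ is metric because $a-d\le b\le a\le a+d$, and $(b,c,b)$ is metric unless $c>2b$; in that exceptional case take $y=c$ instead, which works because $a-d\le b<\tfrac{c}{2}<c\le a\le a+d$ and $(c,c,b)$ is metric whenever $b\le c$. The case $a\le c+d$ is the mirror image: take $y=c$, falling back to $y=b$ when $b>2c$. So the split is on whether one of $b,c$ exceeds twice the other, not on anything involving $d$, and $d$ is never the right choice. With that dichotomy supplied, your outline closes and coincides with the paper's proof.
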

\begin{proof}
If $a\leq b+c$ then $a\leadsto (a,d,c,b)$. If $a\leq b+d$ then $b\leadsto (a,d,c,b)$ unless $2b<c$ in which case $c\leadsto (a,d,c,b)$. If $a\leq c+d$ then $c\leadsto (a,d,c,b)$ unless $2c<b$ in which case $b\leadsto (a,d,c,b)$. 
\end{proof}
Hence, in order to verify that $R$ satisfies the 4-values condition, it suffices to consider quadruples for which $a$ is larger than the sum of any two  of the other three numbers.

\section{Completion of universal metric spaces}

\begin{defin}\label{defin:join}
Let $\mathrm{A}=(A;\de_\mathrm{A})$ and $\mathrm{B}=(B;\de_\mathrm{B})$ be two metric spaces with $A=\{a_i\mid i\in m\in \omega\}$ and $B=\{b_i\mid i\in m\in \omega\}$ and $A\cap B=\emptyset$. A metric space $\mathrm{P}=(A\cup B;\de_\mathrm{P})$ with $\restrict{\mathrm{P}}{A}=\mathrm{A}$  and $\restrict{\mathrm{P}}{B}=\mathrm{B}$ is an $h$-join of $\mathrm{A}$ and $\mathrm{B}$ if $\de_\mathrm{P}(a_i,b_i)<h$ for all $i\in m$. (The $h$-join $\mathrm{P}$ depends explicitly on the enumeration of $A$ and $B$.)
\end{defin}

\begin{lem}\label{lem:join1}
Let $R\subseteq \Re_{\geq 0}$ satisfy the 4-values condition and have $0$ as a limit and let $\mathcal{F}_R$ be the class of finite metric spaces $\mathrm{F}$ with $\dist(\mathrm{F})\subseteq R$.

Let $\mathrm{A}=(A;\de_\mathrm{A})$ and $\mathrm{B}=(B;\de_\mathrm{B})$ be two metric spaces in $\mathcal{F}_R$ for which $A=\{a_0,a_1,a_2,\dots,a_m\}$ and $B=\{b_0,b_1,b_2,\dots,b_m\}$ and $A\cap B=\emptyset$. Let $A'=\{a_i\mid i\in m\}$ and $B'=\{b_i\mid i\in m\}$ and $\mathrm{A}',\mathrm{B}'$ metric spaces with $\mathrm{A}'=\restrict{\mathrm{A}}{A'}$ and $\mathrm{B}'=\restrict{\mathrm{B}}{B'}$. Let $k\geq \max\{|\de_\mathrm{A}(a_m,a_i)-\de_\mathrm{B}(b_m,b_i)|\mid i\in m\}$ and $h\in R$ and $l\in R$ with :
\[
l+k\leq h\leq \min\big(\{\de_\mathrm{A}(a_m,a_i)\mid i\in m\}\cup \{\de_\mathrm{B}(b_m,b_i)\mid i\in m\}\big).
\]
Then if there exists an $l$-join $\mathrm{Q}=(A',B');\de_\mathrm{Q})\in \mathcal{F}_R$ of $\mathrm{A}'$ and $\mathrm{B}'$, there exists an $h$-join $\mathrm{P}\in \mathcal{F}_R$ of $\mathrm{A}$ and $\mathrm{B}$.
\end{lem}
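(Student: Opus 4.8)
The plan is to build the $h$-join $\mathrm{P}$ of $\mathrm{A}$ and $\mathrm{B}$ in two stages, using the given $l$-join $\mathrm{Q}$ of $\mathrm{A}'$ and $\mathrm{B}'$ as the core and then adjoining the two remaining points $a_m$ and $b_m$ one at a time by amalgamation. First I would set $\mathrm{A}^+ = \restrict{\mathrm{A}}{(A' \cup \{a_m\})} = \mathrm{A}$ and amalgamate $\mathrm{Q}$ with $\mathrm{A}$ over the common subspace $\mathrm{A}'$: since $\dist(\mathrm{Q}) \cup \dist(\mathrm{A}) \subseteq R$ and $R$ satisfies the 4-values condition, Theorem~\ref{thm:alamgam3} (the finite case) gives a metric space $\mathrm{Q}_1 \in \amalg_R(\mathrm{Q}, \mathrm{A}) \subseteq \mathcal{F}_R$ on the point set $A' \cup B' \cup \{a_m\}$. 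Then I would amalgamate $\mathrm{Q}_1$ with $\mathrm{B}$ over $\mathrm{B}'$, again invoking Theorem~\ref{thm:alamgam3}, to obtain $\mathrm{P} \in \amalg_R(\mathrm{Q}_1, \mathrm{B}) \subseteq \mathcal{F}_R$ on the full point set $A \cup B$. By construction $\restrict{\mathrm{P}}{A} = \mathrm{A}$ and $\restrict{\mathrm{P}}{B} = \mathrm{B}$, so the only thing that is not automatic is the $h$-join inequality $\de_\mathrm{P}(a_i, b_i) < h$ for all $i \in m+1$.

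For the indices $i \in m$ the distance $\de_\mathrm{P}(a_i, b_i) = \de_\mathrm{Q}(a_i, b_i) < l \leq h$ is inherited from $\mathrm{Q}$, so the real work is to control the one new distance $\de_\mathrm{P}(a_m, b_m)$, and this is where I cannot simply take an arbitrary amalgam: I must choose the amalgams so that $\de_\mathrm{P}(a_m, b_m)$ comes out strictly below $h$. The idea is that $a_m$ and $b_m$ are each close (in $\mathrm{P}$, via $\mathrm{Q}$) to the pair $a_i, b_i$ for a well-chosen $i$: pick $i \in m$ realizing $|\de_\mathrm{A}(a_m, a_i) - \de_\mathrm{B}(b_m, b_i)| \leq k$. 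Then in any amalgam we have the triangle estimate
\[
\de_\mathrm{P}(a_m, b_m) \leq \de_\mathrm{A}(a_m, a_i) + \de_\mathrm{Q}(a_i, b_i) + \de_\mathrm{B}(b_i, b_m) < \de_\mathrm{A}(a_m, a_i) + l + \de_\mathrm{B}(b_i, b_m),
\]
which is not yet good enough since the two $\de$-terms on the right can be large. So instead of taking an arbitrary amalgam I would use Lemma~\ref{lem:interval}: rather than amalgamating along the full point set in one shot, amalgamate $a_m$ against the triangle $\{a_i, b_i, \dots\}$ choosing $\de(a_m, b_i)$ as small as the interval $[\mathfrak{u}, \mathfrak{l}]$ (intersected with $R$) permits. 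Concretely $\mathfrak{u}(\,\cdot\,) = |\de_\mathrm{A}(a_m, a_i) - \de_\mathrm{Q}(a_i, b_i)| $ and using $\de_\mathrm{Q}(a_i,b_i) < l$ together with $l + k \leq h$ and $h \leq \de_\mathrm{A}(a_m, a_i)$ one can force $\de_\mathrm{P}(a_m, b_i)$ into a range near $\de_\mathrm{A}(a_m,a_i)$; symmetrically for $b_m$; and then the triangle through $b_i$ (resp. $a_i$) plus the bound $|\de_\mathrm{A}(a_m,a_i) - \de_\mathrm{B}(b_m,b_i)| \leq k < h - l$ closes the estimate $\de_\mathrm{P}(a_m, b_m) < h$.

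The main obstacle I anticipate is the bookkeeping in that last stage: I must verify simultaneously that the intersection of all the relevant intervals $[\mathfrak{u}(\mathrm{A}_{v,w}, \mathrm{B}_{v,w}), \mathfrak{l}(\mathrm{A}_{v,w}, \mathrm{B}_{v,w})]$ (over the triangles involving $a_m$ or $b_m$) is nonempty, that it contains a point of $R$ — here the hypothesis that $0$ is a limit of $R$, or more precisely the 4-values condition via Lemma~\ref{lem:alamgam1}, is what supplies the witness — and that the chosen witnesses also respect the upper constraints coming from $h \leq \min\{\de_\mathrm{A}(a_m,a_i)\} \cup \{\de_\mathrm{B}(b_m,b_i)\}$ so that every triple $(\de_\mathrm{P}(a_m, a_j), \dots)$ remains metric. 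I would organize this exactly as in the proof of Lemma~\ref{lem:alamgam1}: take $\hat{\mathfrak{u}}$ the sup and $\hat{\mathfrak{l}}$ the inf of the relevant endpoint expressions over $A \cap B$, show $[\hat{\mathfrak{u}}, \hat{\mathfrak{l}}] \subseteq \mathcal{S}$, and show this interval meets $R$ and lies below $h$; the inequalities $l + k \leq h$ and $h \leq \de_\mathrm{A}(a_m, a_i)$, $h \leq \de_\mathrm{B}(b_m, b_i)$ are precisely what is needed to push $\hat{\mathfrak{u}} < h$. Everything else is a routine application of Theorem~\ref{thm:alamgam3} and Lemma~\ref{lem:interval}.
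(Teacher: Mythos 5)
Your proposal follows essentially the same route as the paper: attach $a_m$ and $b_m$ to the given $l$-join $\mathrm{Q}$ by two applications of Theorem~\ref{thm:alamgam3} (the paper forms $\mathrm{A}^\ast\in\amalg_R(\mathrm{A},\mathrm{Q})$ on $A\cup B'$ and $\mathrm{B}^\ast\in\amalg_R(\mathrm{B},\mathrm{Q})$ on $B\cup A'$, noting that any such amalgam automatically puts $\de(a_m,b_i)$ within $l$ of $\de_\mathrm{A}(a_m,a_i)$ and $\de(b_m,a_i)$ within $l$ of $\de_\mathrm{B}(b_m,b_i)$), and then control the single remaining distance $\de(a_m,b_m)$ via $k+l\leq h\leq\min\{\de_\mathrm{A}(a_m,a_i),\de_\mathrm{B}(b_m,b_i)\}$, which are exactly the inequalities you identify. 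The ``bookkeeping'' you defer to an interval-intersection argument is dispatched in the paper by simply declaring $\de(a_m,b_m)=h$ --- admissible since $h\in R$ by hypothesis --- and checking only the triples $(a_m,b_m,a_i)$ and $(a_m,b_m,b_i)$, so no search for a point of $R$ strictly below $h$ is required.
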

\begin{proof}
There exist, according to Theorem \ref{thm:alamgam3}, a metric space $\mathrm{A^\ast}=(A^\ast;\de_{\mathrm{A}^\ast})\in \amalg_R(\mathrm{A},\mathrm{Q})$ and a metric space  $\mathrm{B^\ast}=(A^\ast;\de_{\mathrm{B}^\ast})\in \amalg_R(\mathrm{B},\mathrm{Q})$. Note  $|\de_\mathrm{A}(a_m,a_i)-\de_{\mathrm{A}^\ast}(a_m,b_i)|<l$ and $|\de_\mathrm{B}(b_m,b_i)-\de_{\mathrm{B}^\ast}(b_m,a_i)|<l$ for all $i\in m$. Let $\mathrm{P}=(A\cup B;\de)$ be the premetric space with $\restrict{\mathrm{P}}{(A\cup B')}=\mathrm{A}^\ast$ and $\restrict{\mathrm{P}}{(B\cup A')}=\mathrm{B}^\ast$ and with $\de(a_m,b_m)=h$.

In order to see that $\mathrm{P}$ is a metric space we have to check the triples of the form $(a_m,b_m,a_i)$ and $(a_m,b_m, b_i)$ for all $i\in m$. Indeed:
\begin{align*}
&|\de(a_m,a_i)-\de(b_m,a_i)\leq \\
&|\de(a_m,a_i)-\de(b_m,b_i)|+|\de(b_m,b_i)-\de(b_m,a_i)|\leq k+l\leq h=\de(a_m,b_m).
\end{align*}
This verifies that the triple $(a_m,b_m,a_i)$ is metric because $h\leq \de(a_m,a_i)$ and hence the distance $\de(a_m,b_m)$ is not larger than the other two distances in the triple $(a_m,b_m,a_i)$. Similar, the triangles of the form $(a_m,b_m,b_i)$ are metric.  
\end{proof}

\begin{lem}\label{lem:join2}
Let $R\subseteq \Re_{\geq 0}$ satisfy the 4-values condition and have $0$ as a limit. Then, for every $m\in \omega$ and $\boldsymbol{r}>0$ and $h\in \Re_{>0}$ exists a number $\gamma(h)<h$ so that:

For all metric spaces $\mathrm{A}=(\{a_i\mid i\in m\};\de_\mathrm{A})$ and $\mathrm{B}=(\{b_i\mid i\in m\};\de_\mathrm{B})$ in $\mathcal{F}_R$ with $|\de_\mathrm{A}(a_i,a_j)-\de_\mathrm{B}(b_i,b_j)|<\gamma(h)$ for all $i,j\in m$ and with $\min\big(\dist(\mathrm{A})\cup\dist(\mathrm{B})\big)\geq \boldsymbol{r}$ and $A\cap B=\emptyset$ exists an $h$-join of\/ $\mathrm{A}$ and $\mathrm{B}$ in $\mathcal{F}_R$.
\end{lem}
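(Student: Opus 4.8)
The plan is to prove this by induction on $m$, using Lemma \ref{lem:join1} as the inductive engine. The idea is that Lemma \ref{lem:join1} lets us pass from an $l$-join of the $m$-point ``truncations'' $\mathrm{A}', \mathrm{B}'$ to an $h$-join of the full $(m{+}1)$-point spaces $\mathrm{A}, \mathrm{B}$, provided the new distances $\de_\mathrm{A}(a_m, a_i)$ and $\de_\mathrm{B}(b_m, b_i)$ are all at least $h$ and the discrepancy parameter $k = \max_i |\de_\mathrm{A}(a_m,a_i) - \de_\mathrm{B}(b_m,b_i)|$ satisfies $l + k \le h$ for some $l \in R$. Since the hypothesis $\min(\dist(\mathrm{A}) \cup \dist(\mathrm{B})) \ge \boldsymbol{r}$ guarantees every nonzero distance is at least $\boldsymbol{r}$, the ``$h \le$ all new distances'' requirement will be met as long as we choose $h \le \boldsymbol{r}$ at the top level and, more carefully, manage a decreasing sequence of join-thresholds as the induction unwinds.

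First I would set up the recursion on the threshold. For the induction I want, for each $j \le m$, a value $h_j \in R$ (the join-threshold available at stage $j$, where $\mathrm{A}, \mathrm{B}$ have $j$ points) together with a tolerance $\gamma_j$ on the mutual distance-discrepancy $|\de_\mathrm{A}(a_i,a_j) - \de_\mathrm{B}(b_i,b_j)|$. The base case $m = 1$ (a single point each) is trivial: since $0$ is a limit of $R$, pick any $h_0 \in R$ with $0 < h_0 < h$ and the one-point $h$-join always exists. For the inductive step from $j$ to $j+1$: given $h_{j+1} \le \boldsymbol{r}$, I need $k + l \le h_{j+1}$ with $k$ bounding the discrepancies $|\de_\mathrm{A}(a_j,a_i) - \de_\mathrm{B}(b_j,b_i)|$ and $l \in R$ the threshold of the smaller join. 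Since $0$ is a limit of $R$, choose $h_j \in R$ with $h_j < h_{j+1}$ so small that $2 h_j < h_{j+1}$; then demand $\gamma_{j+1} \le h_j$, so that $k \le h_j$, and use $l = h_j \in R$, giving $k + l \le 2h_j \le h_{j+1}$; the hypothesis $\min(\dist \mathrm{A} \cup \dist \mathrm{B}) \ge \boldsymbol{r} \ge h_{j+1} \ge h_j \ge$ the new distances' lower bound is what makes the ``$h \le \min(\dots)$'' clause of Lemma \ref{lem:join1} hold. Finally set $\gamma(h) := \min\{\gamma_1, \dots, \gamma_m\}$ (equivalently $\gamma_m$ if the $\gamma_j$ are chosen decreasing), and observe that the same bound on $|\de_\mathrm{A}(a_i,a_j) - \de_\mathrm{B}(b_i,b_j)|$ restricts to every truncation, so the induction hypothesis applies to $\mathrm{A}', \mathrm{B}'$ at every level.

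The one subtlety I would flag as the main obstacle: the enumeration-dependence and the bookkeeping of \emph{which} coordinate gets ``peeled off'' at each stage. Lemma \ref{lem:join1} peels off the top index $a_m, b_m$, and an $h$-join must satisfy $\de_\mathrm{P}(a_i,b_i) < h$ for \emph{all} $i$ simultaneously, so I must be sure that the $l$-join produced at stage $j$ for $\mathrm{A}', \mathrm{B}'$ is compatible — as a premetric structure on $A' \cup B'$ — with the amalgamation in the step that adds $a_j, b_j$. This is exactly what the construction inside the proof of Lemma \ref{lem:join1} does: it amalgamates $\mathrm{A}$ with $\mathrm{Q}$ and $\mathrm{B}$ with $\mathrm{Q}$ over the common $l$-join $\mathrm{Q}$, so the $l$-join structure is preserved and only one new pair distance $\de(a_m,b_m) = h$ is introduced. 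So the induction genuinely threads through, and the only real work is checking that the chain of inequalities $k + l \le h_{j} \le \boldsymbol{r}$ can be satisfied, which the density of $R$ near $0$ guarantees. Once $\gamma(h)$ is defined as above, the lemma follows by applying the inductive construction $m$ times.
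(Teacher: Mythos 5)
Your proposal is correct and follows essentially the same route as the paper: the paper also fixes a sequence $h_0<h_1<\dots<h_{m-1}<\min\{h,\boldsymbol{r}\}$ in $R$ with $2h_i<h_{i+1}$ (using that $0$ is a limit of $R$), sets $\gamma(h)=h_0$, and peels off one point at a time by induction via Lemma \ref{lem:join1}, exactly as you describe. The compatibility issue you flag is likewise absorbed by the amalgamation inside Lemma \ref{lem:join1}, so there is no gap.
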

\begin{proof}
For every $0<x\in R$\/ let $0<x^\ast\in R$ with $2\cdot x^\ast<x$. Given $h$ and $\boldsymbol{r}$ let $0<h_{m-1}<\min\{h,\boldsymbol{r}\}$ and for all integers $m-1> i\geq 0$ let $h_i=h^\ast_{i+1}$ and $\gamma(h)=h_0$. The Lemma follows from Lemma \ref{lem:join1} via induction on $i$.
\end{proof}

Note: Let  $a,b,a',b'$ be four points in a metric space, then 
\begin{align}\label{align:4points}
&|\de(a,b)-\de(a',b')|\leq \\
&|\de(a,b)-\de(a,b')|+|\de(a,b')-\de(a',b')|\leq \de(b,b')+\de(a,a').\notag
\end{align}

\begin{thm}\label{thm:join3}
Let $0\in R\subseteq \Re_{\geq 0}$ be countable,  satisfy the 4-values condition and have $0$ as a limit.  Let  $\UR$ be  the countable universal metric space with $R$ as set of distances given by Theorem~\ref{thm:basic1}.  Then  $\mathrm{M}=(M;\de)$, the completion of $\UR$,   is homogeneous and separable and complete.   
\end{thm}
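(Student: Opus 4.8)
The plan is to verify the three conclusions in turn. Separability is immediate: $\UR$ is countable, hence separable, and its completion $\mathrm{M}$ contains $\UR$ as a dense subspace, so $\mathrm{M}$ is separable. Completeness is the defining property of a metric completion, so that too requires no work. The real content is homogeneity, and by Theorem~\ref{thm:realint} it suffices to show that $\mathrm{M}$ is separable and complete (done) and that \emph{every \Kat function of $\mathrm{M}$ has a realization in $\mathrm{M}$}. Since $\dist(\UR)=R$ and $R$ satisfies the $4$-values condition, and since $\mathrm{M}$ is the completion of $\UR$, one expects $\dist(\mathrm{M})=\overline{R}$; a \Kat function $\mathfrak{k}$ of $\mathrm{M}$ has a finite domain $F=\{x_0,\dots,x_{n-1}\}\subseteq M$ and values $\mathfrak{k}(x_i)\in\dist(\mathrm{M})$, with $\Sp(\mathfrak{k})$ lying in the age of $\mathrm{M}$.

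The key step is an approximation argument: given such a \Kat function $\mathfrak{k}$, I would first replace $F$ by a nearby finite tuple $F'=\{y_0,\dots,y_{n-1}\}\subseteq\UR$ (possible since $\UR$ is dense in $\mathrm{M}$), and replace the target distances $\mathfrak{k}(x_i)$ by nearby values in $R$ (possible since $R$ is dense in $\dist(\mathrm{M})$, using also that $0$ is a limit of $R$ to handle distances near $0$). The point of Lemma~\ref{lem:join2} — and of the $h$-join machinery built up in Lemma~\ref{lem:join1} — is precisely that for two finite metric spaces in $\mathcal F_R$ that are componentwise $\gamma(h)$-close and bounded away from $0$, there is an $h$-join in $\mathcal F_R$. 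I would use this iteratively: choose a decreasing sequence $h_j\to 0$, and at stage $j$ produce a point $z_j\in\UR$ realizing (in $\UR$, which realizes all its restricted type functions by Lemma~\ref{lem:realin4}, being universal) a restricted type function over a finite subset of $\UR$ whose values approximate $\mathfrak{k}$ to within $h_j$, in such a way that the $z_j$ form a Cauchy sequence. Its limit $z\in\mathrm{M}$ is then forced to satisfy $\de(z,x_i)=\mathfrak{k}(x_i)$ for all $i$, i.e. $z$ realizes $\mathfrak{k}$. The inequality~(\ref{align:4points}) is exactly the tool that controls how errors in the base points and in the approximating distances propagate to errors in $\de(z_j,x_i)$, and the fact that $\Sp(\mathfrak{k})$ is metric keeps the successive approximate type functions genuinely realizable in $\UR$.

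The main obstacle, I expect, is the bookkeeping that makes the sequence $(z_j)$ simultaneously (a) Cauchy, so that it has a limit in the complete space $\mathrm{M}$, and (b) convergent to an \emph{exact} realization of $\mathfrak{k}$ rather than merely an approximate one. This has two subtleties. First, distances $\mathfrak{k}(x_i)$ that are very small (close to $0$) cannot be bounded below by a fixed $\boldsymbol r>0$, so the hypothesis $\min(\dist(\mathrm A)\cup\dist(\mathrm B))\geq\boldsymbol r$ in Lemma~\ref{lem:join2} needs care — one handles the large distances via the join and treats coordinates with $\mathfrak{k}(x_i)$ near $0$ separately, or observes that if $\mathfrak k(x_i)=0$ then $x_i=z$ is forced and can be removed from the domain. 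Second, one must check that the limit point $z$ actually lies in $\mathrm{M}$ and that passing to the limit preserves the equalities $\de(z,x_i)=\mathfrak k(x_i)$; this is continuity of the metric, combined with the estimate that the approximation errors at stage $j$ are $O(h_j)\to 0$. Once these estimates are arranged, $z$ realizes $\mathfrak k$, every \Kat function of $\mathrm M$ is realized, and Theorem~\ref{thm:realint} delivers homogeneity.
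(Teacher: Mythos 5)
Your proposal follows essentially the same route as the paper: reduce homogeneity to realizing every \Kat function of $\mathrm{M}$ via Theorem~\ref{thm:realint}, approximate the domain and target distances inside the dense copy of $\UR$, and use the $h$-join machinery of Lemma~\ref{lem:join2} with a decreasing sequence $h_n$ to build a Cauchy sequence of approximate realizations in $\UR$ whose limit realizes $\mathfrak{k}$. Your worry about distances near $0$ is in fact harmless here, since $\mathfrak{k}$ has finite domain and strictly positive values, so $k=\min\bigl(\dist(\Sp(\mathfrak{k}))\bigr)>0$ serves as the fixed lower bound $\boldsymbol{r}$ — exactly as in the paper's proof.
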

\begin{proof}
The universal space  $\UR=(U;\de)$ is dense in $\mathrm{M}$.   It follows from Theorem \ref{thm:realint} that $\mathrm{M}$ is homogeneous if every \Kat function of $\mathrm{M}$ has a realization in $\mathrm{M}$.

Let $\mathfrak{k}$ be a \Kat function of $\mathrm{M}$ with $\dom(\mathfrak{k})=\{a_i\mid i\in m\in \omega\}:=A$. There exists a subset $B=\{b_i\mid i\in m\}\cup \{b_m\}\subseteq M$ with $\de(a_i,a_j)=\de(b_i,b_j)$ and $\mathfrak{k}(a_i)=\de(b_m,b_i)$ for all $i,j\in m$. Let $k=\min\big(\dist(\Sp(\mathfrak{k}))\big)$. 

Let $0<e\in \Re$ and $B'=\{b_i'\mid 0\leq  i\leq  m\}\subseteq U$ with $\de(b_i,b_i')<e$. Then from Inequality (\ref{align:4points}):  $|\de(b_i,b_j)-\de(b_i',b_j')|<2e$ for all $0\leq i,j\leq m$. Note that if $e<\frac{k}{4}$ then $\min\{\de(b_i,b_j)\mid 0\leq i,j\leq m\}>\frac{k}{2}$. If there is a set of points $A'=\{a_i'\mid i\in m\}\subseteq U$ with $\de(a_i,a_i')<e$ then: $|\de(a_i,a_j)-\de(a_i',a_j')|<2e$ for all $i,j\in m$. Hence, because $\de(a_i,a_j)=\de(b_i,b_j)$: 
\[
\de(a_i',a_j')-\de(b_i',b_j')|<4e.
\]
It follows from Lemma \ref{lem:join2} that if $4e<\min(\gamma(h),k)$ then there exists an $h$-join $\mathrm{C}'=(\{B'\cup A'\};\de_{\mathrm{C}'})\in \mathcal{F}_R$  of $\restrict{\UR}{B'}$ and $\restrict{\UR}{A'}$. It follows from Lemma~\ref{lem:realin4} that there exists a realization $\mathrm{C}=(\{c_i\mid i\in m\};\de)$ of $\mathrm{C}'$ in $\UR$ with $\de(a_i',c_i)<h$ and $\de(c_i,c_j)=\de(b'_i,b'_j)$ for all $i,j\in m$. Also:
\begin{align}\label{align:conva}
\text{$\de(c_i,a_i)\leq \de(c_i,a_i')+\de(a_i',a_i)<h+e<2h$ for all $i\in m$}.
\end{align}

Let $(h_n;n\in \omega)$ and $(e_n;n\in \omega)$ be a sequence of numbers in $R$ with  $h_n>2\cdot h_{n+1}$ and $4e_n<\min(\gamma(\frac{1}{2}h_n),k)$. Then there exist sets of points: 
\begin{align}\label{align:exc}
&B_n'=\{b'_{n,i}\mid 0\leq i\leq m\} \text{ with $\de(b_{n,i}',b_i)<e_n<\frac{1}{2}h_n$ and}\notag \\ 
&C_n=\{c_{n,i}\mid i\in m\} \text{ with $\de(c_{n,i},a_i)<h_n$ and with}\notag \\
&\de(b_{n,i}',b_{n,j}')=\de(c_{n,i},c_{n,j}) \text{ and $\de(c_i,a_i)<h_n$ for all $i,j\in m$ and}\notag \\
&\de(b'_{n,m},b'_{n+1,m})\leq \de(b'_{n,m},b_{n,m})+\de(b_{n,m},b'_{n+1,m})<h_n.
\end{align}

Using the fact that \Kat functions of $\UR$ have realizations in $U$ construct recursively points $c_{n,m}\in U$ so that for all $i\in m$ and $n\in \omega$:
\[
\de(c_{n,m},c_{n,i})=\de(b'_{n,m},b'_{n,i})\, \, \, \, \, \, \,  \de(c_{n,m},c_{n+1,m})=\de(b'_{n,m},b'_{n+1,m})<h_n.
\]
That is,  the function $f$ with $f(b'_{n,i})=c_{n,i}$ for $0\leq i\leq m$ and $n\in \omega$ is an isometry of a subset of $U$ to a subset of $U$. 

It follows from Inequality (\ref{align:conva}) that for every  $i\in m$ the sequence $(c_{n,i})$ converges to $a_i$ and from Inequality (\ref{align:exc}) that the sequence $(c_{n,m})$ is Cauchy converging to, say $c_m$. For every $0\leq i\leq m$ the sequence $(b'_{n,i})$ converges to $b_i$ and hence for every $i\in m$:
\[
\lim_{n\to \infty}\de(b'_{n,m},b_{n,i})=\de(b_m,b_i)=\mathfrak{k}(a_i).
\]
It follows that $\de(c_m,a_i)=\lim_{n\to \infty}\de(c_{n,m},c_{n,i})=\mathfrak{k}(a_i)$ implying that $c_m$ is a realization of the \Kat function $\mathfrak{k}$. 

\end{proof}

\begin{thm}\label{thm:afterth}
The completion of a homogeneous  separable  metric space $\mathrm{M}$ which embeds isometrically  every finite metric space $\mathrm{F}$ with $\dist(\mathrm{F})\subseteq \dist(\mathrm{M})$   is homogeneous. 
\end{thm}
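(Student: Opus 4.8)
\noindent The hypotheses say exactly that $\mathrm{M}=(M;\de)$ is a \emph{separable universal} metric space in the sense of Definition~\ref{defin:uni}; write $R=\dist(\mathrm{M})$ and let $\widehat{\mathrm{M}}$ denote its completion. By Theorem~\ref{thm:4nec} the set $R$ satisfies the 4-values condition. The plan is to reduce the statement to Theorem~\ref{thm:join3}, which treats the completion of the canonical countable universal space $\UR$; the argument splits according to whether $0$ is a limit of $R$.

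Suppose first that $0$ is not a limit of $R$. Then $\delta:=\inf\bigl(R\setminus\{0\}\bigr)>0$, so any two distinct points of $\mathrm{M}$ lie at distance at least $\delta$, and every Cauchy sequence in $\mathrm{M}$ is eventually constant. Hence $\mathrm{M}$ is already complete, $\widehat{\mathrm{M}}=\mathrm{M}$, and $\widehat{\mathrm{M}}$ is homogeneous by hypothesis.

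Now suppose $0$ is a limit of $R$. Since $\mathrm{M}$ is universal it realizes all of its restricted type functions (Lemma~\ref{lem:realin4}), so I would apply Lemma~\ref{lem:univsub} --- rather than merely quoting Corollary~\ref{cor:univsub} --- to obtain a countable dense subspace $\mathrm{S}=(S;\de)$ of $\mathrm{M}$ realizing every restricted type function of $\mathrm{M}$ whose domain is contained in $S$. Testing this against the one-point type functions (domain a single point of $S$, value an arbitrary $r\in R$) yields $\dist(\mathrm{S})=R$; hence the restricted type functions of $\mathrm{S}$ are precisely those of $\mathrm{M}$ with domain in $S$, all of which are realized in $S$, so by Corollary~\ref{cor:realin3} the space $\mathrm{S}$ is a countable universal metric space with $\dist(\mathrm{S})=R$. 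In particular $R$ is countable, so by Theorem~\ref{thm:basic1} together with the uniqueness in Theorem~\ref{thm:Fraisse} we may identify $\mathrm{S}$ with $\UR$. Since $\mathrm{S}$ is dense in $\mathrm{M}$ and $\mathrm{M}$ is dense in $\widehat{\mathrm{M}}$, the space $\widehat{\mathrm{M}}$ is a completion of $\mathrm{S}\cong\UR$; completions being unique up to isometry, $\widehat{\mathrm{M}}$ is isometric to the completion of $\UR$, which is homogeneous by Theorem~\ref{thm:join3}. Therefore $\widehat{\mathrm{M}}$ is homogeneous.

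The only substantive ingredient is Theorem~\ref{thm:join3} (realizing \Kat functions in the completion, the real work being done in its proof); the rest is bookkeeping. The step I would take care over is guaranteeing that the countable dense subspace carries the \emph{full} distance set $R$ --- so that Theorem~\ref{thm:join3} applies to the correct space $\UR$, whose distance set still accumulates at $0$ --- which is why I would extract $\mathrm{S}$ from Lemma~\ref{lem:univsub} and verify $\dist(\mathrm{S})=R$ directly rather than treat Corollary~\ref{cor:univsub} as a black box.
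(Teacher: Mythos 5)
Your proof is correct and takes essentially the same route as the paper: extract a countable dense universal subspace, identify it with $\UR$ via uniqueness of the \Fra limit, and apply Theorem~\ref{thm:join3} to the common completion. The only differences are that you explicitly split off the case where $0$ is not a limit of $R$ (where $\mathrm{M}$ is already complete, a case Theorem~\ref{thm:join3} does not cover and the paper's proof passes over silently) and that you verify $\dist(\mathrm{S})=\dist(\mathrm{M})$ directly, so your version is if anything slightly more careful than the paper's two-line argument.
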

\begin{proof}
The space $\mathrm{M}$ contains a countable dense universal  subspace $\mathrm{N}$ according  to Corollary \ref{cor:univsub}.  Let $R=\dist(\mathrm{N})$. It follows from Theorem~\ref{thm:Uryuni} that we can take $\mathrm{N}$ to be the universal metric space $\UR$. The Theorem follows  from Theorem \ref{thm:join3} because the completion of $\UR$ is equal to the completion of $\mathrm{M}$.

\end{proof}

\begin{thm}\label{thm:embedwh}
Let $0\in R\subseteq \Re_{\geq 0}$ be countable,  satisfy the 4-values condition and have $0$ as a limit and let  $\mathrm{M}=(M;\de)$ be  the completion of $\UR=(U;\de)$. 

A finite metric space $\mathrm{A}=(A;\de_{\mathrm{A}})$ with $A=\{a_i\mid i\in m\}$ has an isometric embedding into $\mathrm{M}$ if and only if for ever $\epsilon>0$ there exists a metric space $\mathrm{B}=(B;\de_\mathrm{B})\in \mathcal{F}_R$ with $B=\{b_i\mid i\in m\}$ so that $|\de(a_i,a_j)-\de(b_i,b_j)|<\epsilon$ for all $i,j\in m$. 
\end{thm}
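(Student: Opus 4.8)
The statement is an ``if and only if'', and the forward direction is essentially immediate: if $\mathrm{A}$ embeds isometrically into $\mathrm{M}$, pick a copy $\{a_i' \mid i\in m\}\subseteq M$ of $\mathrm{A}$. Since $U$ is dense in $M$, for each $\epsilon>0$ choose $b_i\in U$ with $\de(a_i',b_i)<\epsilon/(2m)$ (or any bound forcing the conclusion); by Inequality~(\ref{align:4points}) the finite space $\mathrm{B}=\restrict{\UR}{\{b_i\mid i\in m\}}\in \mathcal{F}_R$ satisfies $|\de(a_i,a_j)-\de(b_i,b_j)|<\epsilon$ for all $i,j$. (One subtlety: the $b_i$ need not be distinct, but this is harmless—a degenerate $\mathrm{B}$ with a repeated point still satisfies the inequality for small $\epsilon$ once the $a_i$ are distinct, or one perturbs slightly; alternatively just require $\epsilon$ small relative to $\min_{i\neq j}\de(a_i,a_j)$.) So the real content is the converse.

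For the converse, I am given that for every $\epsilon>0$ there is $\mathrm{B}_\epsilon\in\mathcal{F}_R$ with the same vertex index set $m$ approximating $\mathrm{A}$ within $\epsilon$, and I must produce an actual isometric copy of $\mathrm{A}$ inside $\mathrm{M}$. The natural approach is a Cauchy-sequence construction mirroring the proof of Theorem~\ref{thm:join3}: build realizations $\mathrm{C}_n=\{c_{n,i}\mid i\in m\}$ of successively better approximating spaces $\mathrm{B}_{\epsilon_n}$ inside $\UR$ (using Lemma~\ref{lem:realin4}, since each $\mathrm{B}_{\epsilon_n}\in\mathcal{F}_R$ is in the age of $\UR$), and then use the $h$-join machinery (Lemma~\ref{lem:join2}) to ``link'' consecutive copies $\mathrm{C}_n$ and $\mathrm{C}_{n+1}$ by distances $<h_n$ in a common metric space of $\mathcal{F}_R$, which can itself be realized in $\UR$. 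Choosing $\epsilon_n\to 0$ and $h_n$ with $h_n>2h_{n+1}$ and $h_n$ small relative to $\gamma(\tfrac12 h_{n-1})$ and to the minimal distance occurring, each coordinate sequence $(c_{n,i})_{n}$ becomes Cauchy in $\mathrm{M}$, hence converges to some $a_i^\ast\in M$; and since $|\de(c_{n,i},c_{n,j})-\de(a_i,a_j)|\to 0$ by the approximation hypothesis, the limit points satisfy $\de(a_i^\ast,a_j^\ast)=\de(a_i,a_j)$, giving the desired embedding.

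The main obstacle is the bookkeeping around the ``minimal distance'' hypothesis in Lemma~\ref{lem:join2}: that lemma requires $\min(\dist(\mathrm{A})\cup\dist(\mathrm{B}))\ge\boldsymbol{r}$, i.e.\ it produces $h$-joins only for spaces all of whose distances are bounded away from $0$. The space $\mathrm{A}$, being an arbitrary finite metric space with distances in the closure of $R$, has a fixed positive minimal distance $\boldsymbol r:=\min\{\de(a_i,a_j)\mid i\neq j\}$ (assuming the $a_i$ distinct, which we may), and for $\epsilon_n$ small enough the approximating spaces $\mathrm{B}_{\epsilon_n}$ also have minimal distance $\ge \boldsymbol r/2$; so Lemma~\ref{lem:join2} applies with parameter $\boldsymbol r/2$ once $n$ is large, and one simply starts the recursion past that threshold. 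The second delicate point is ensuring the linking $h_n$-joins actually land in $\mathcal{F}_R$ and that their realizations in $\UR$ exist simultaneously for all $n$ (so that the whole configuration $\{c_{n,i}\}$ is a single subset of $U$ on which $\de$ is already defined): this is handled exactly as in the proof of Theorem~\ref{thm:join3}, by recursively realizing \Kat functions that add one new ``layer'' $\mathrm{C}_{n+1}$ at a time, using that the union of a $\mathrm{C}_n$, a $\mathrm{C}_{n+1}$ and the prescribed cross-distances $<h_n$ forms a space in $\mathcal{F}_R$ via Lemma~\ref{lem:join2}. With these two points settled, convergence and the computation $\de(a_i^\ast,a_j^\ast)=\lim_n\de(c_{n,i},c_{n,j})=\de(a_i,a_j)$ are routine, completing the proof.
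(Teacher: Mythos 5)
Your proposal is correct and follows essentially the same route as the paper: the necessity is dismissed as immediate, and the sufficiency is proved by choosing approximating spaces $\mathrm{B}_n\in\mathcal{F}_R$ with errors $e_n\to 0$, linking consecutive ones by $h_n$-joins from Lemma~\ref{lem:join2} with $2h_{n+1}<h_n$, realizing the resulting chain recursively in $\UR$ via Lemma~\ref{lem:realin5}, and passing to limits of the Cauchy coordinate sequences in $\mathrm{M}$. Your explicit handling of the minimal-distance hypothesis of Lemma~\ref{lem:join2} (via $\boldsymbol{r}=\min\dist(\mathrm{A})$ and starting the recursion once $e_n$ is small enough) is the same bookkeeping the paper performs with its parameter $k$ and the condition $h_0<\tfrac{k}{4}$.
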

\begin{proof}
The condition is clearly necessary.

Let $k=\min\big(\dist(\mathrm{A})\big)$. Let  $(h_n)$ be a sequence of positive numbers in $R$ so that $h_0<\frac{k}{4}$ and $2h_{n+1}<h_n$ for all $n\in \omega$. Let $(e_n)$ be a sequence of positive  numbers in $R$ so that $\gamma(h_n)<2e_n$ and $e_{n+1}<e_n$, with $\gamma$ given by Lemma~\ref{lem:join2}. For $n\in \omega$ let  $\mathrm{B}_n=(B_n;\de_{\mathrm{B}_n})$ be a metric space with $B_n=\{b_{n,i}\mid i\in m\}\in \mathcal{F}_R$ and with 
\begin{align}\label{align:ineq23}
|\de_\mathrm{A}(a_i,a_j)-\de_{\mathrm{B}_n}(b_{n,i},b_{n,j})|<e_n<h_n.
\end{align}
Then:
\begin{align*}
|\de_{\mathrm{B}_{n+1}}(b_{n+1,i},b_{n+1,j})-\de_{\mathrm{B}_n}(b_{n,i},b_{n,j})|<e_{n+1}+e_n<2e_n.
\end{align*}
It follows from Lemma~\ref{lem:join2}  that there exists, for every $n\in\omega$,   an $h_n$-join $\mathrm{P}_n=(B_n\cup B_{n+1};\de_\mathrm{P})\in \mathcal{F}_R$ of $\mathrm{B}_n$ with $\mathrm{B}_{n+1}$.  

The space $\UR$ is universal and hence each of the finite metric spaces $\mathrm{P}_n$ has an isometric embedding into $\UR$. It follows from Lemma \ref{lem:realin5} via a recursive construction that there exist isometric copies $B_n'=\{b_{n,i}'\mid i\in m\}$ of the sets $B_n$ in $U$ so that, for all $n\in \omega$: 
\[
\de(b_{n,i}',b_{n+1,i}')=\de_{\mathrm{P}_n}(b_{n,i},b_{n+1,i})<h_n.
\]
It follows that for every $i\in m$ the sequence $(b_{n,i})$ is Cauchy and hence has a limit, say $b_i\in M$. Also for all $i,j\in m$: 
\[
\lim_{n\to\infty}\de(b'_{n,i},b'_{n,j})=\lim_{n\to\infty}\de(b_{n,i},b_{n,j})=\de_\mathrm{A}(a_i,a_j),
\] 
with the last equality implied by Inequality (\ref{align:ineq23}).
\end{proof}

\begin{cor}\label{cor:embedwh}
Let $0\in R\subseteq \Re_{\geq 0}$ be countable,  satisfy the 4-values condition and have $0$ as a limit and let  $\mathrm{M}=(M;\de)$ be  the completion of $\UR=(U;\de)$. Then $\dist(\mathrm{M})$ is the closure of $R$. The set of distances of the completion $\mathrm{N}$  of a universal separable metric space is a closed subset of $\Re$. 
\end{cor}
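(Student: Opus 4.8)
The plan is to read off both assertions from Theorem~\ref{thm:embedwh}, writing $\overline{R}$ for the closure of $R$ in $\Re$. First I would establish $\dist(\mathrm{M})\subseteq\overline{R}$: since $U$ is dense in $M$, for $x,y\in M$ choose $x_n,y_n\in U$ with $x_n\to x$ and $y_n\to y$; then $\de(x_n,y_n)\to\de(x,y)$ by the continuity estimate~(\ref{align:4points}), and as each $\de(x_n,y_n)$ lies in $R$ (recall $0\in R$), we get $\de(x,y)\in\overline{R}$. For the reverse inclusion $\overline{R}\subseteq\dist(\mathrm{M})$, let $r\in\overline{R}$; if $r=0$ then $r\in R\subseteq\dist(\mathrm{M})$, so assume $r>0$ and apply Theorem~\ref{thm:embedwh} to the two-point metric space $\mathrm{A}=(\{a_0,a_1\};\de_{\mathrm{A}})$ with $\de_{\mathrm{A}}(a_0,a_1)=r$. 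Given $\epsilon$ with $0<\epsilon<r$, the interval $(r-\epsilon,r+\epsilon)$ is contained in $(0,\infty)$ and meets $R$ because $r\in\overline{R}$; choosing $s$ in this intersection and letting $\mathrm{B}$ be the two-point space with $\de_{\mathrm{B}}(b_0,b_1)=s$ yields $\mathrm{B}\in\mathcal{F}_R$ with $|\de_{\mathrm{A}}(a_i,a_j)-\de_{\mathrm{B}}(b_i,b_j)|<\epsilon$ for all $i,j$. Hence $\mathrm{A}$ embeds isometrically into $\mathrm{M}$, so $r\in\dist(\mathrm{M})$. Thus $\dist(\mathrm{M})=\overline{R}$, which is closed and is the closure of $\dist(\UR)=R$.

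For the second assertion, let $\mathrm{P}$ be a universal separable metric space with completion $\mathrm{N}$. By Corollary~\ref{cor:univsub} there is a countable dense universal subspace $\mathrm{P}'$ of $\mathrm{P}$; since $\mathrm{P}'$ is dense in $\mathrm{P}$ and $\mathrm{P}$ is dense in $\mathrm{N}$, the space $\mathrm{N}$ is the completion of $\mathrm{P}'$ as well. Set $R=\dist(\mathrm{P}')$; by Theorem~\ref{thm:4nec} the set $R$ satisfies the 4-values condition, and it is countable. If $0$ is a limit of $R$, then $\mathrm{P}'$ is the countable universal metric space $\UR$ of Theorem~\ref{thm:basic1} (uniqueness of countable universal spaces with a given distance set), and the first assertion applied to $\mathrm{N}$, the completion of $\UR$, gives $\dist(\mathrm{N})=\overline{R}$, a closed set. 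If $0$ is not a limit of $R$, then distinct points of $\mathrm{P}'$ are at distance at least $\inf(R\setminus\{0\})>0$, so every Cauchy sequence in $\mathrm{P}'$ is eventually constant; hence $\mathrm{P}'$ is already complete, $\mathrm{N}=\mathrm{P}'$, and one checks directly that $\dist(\mathrm{N})=R$ is closed. In either case $\dist(\mathrm{N})$ is a closed subset of $\Re$.

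The substantive work has already been done in Theorem~\ref{thm:embedwh}, so there is no serious obstacle in the Corollary itself; the points that need a little attention are (i) that the two-point approximant $\mathrm{B}$ must genuinely be a member of $\mathcal{F}_R$, which forces the restriction $r>0$ and the choice $\epsilon<r$ so that the approximating distance $s$ is a positive element of $R$, and (ii) the reduction of the second assertion to a countable dense universal subspace, which is what makes Theorem~\ref{thm:4nec} and Theorem~\ref{thm:embedwh} applicable, together with the separate and elementary treatment of the degenerate case where $0$ fails to be an accumulation point of the distance set.
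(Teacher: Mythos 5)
Your treatment of the first assertion is correct and in fact more complete than the paper's own proof: the paper only verifies that the closure of $\dist(\mathrm{M})$ is contained in the closure of $R$, leaving the inclusion $\overline{R}\subseteq\dist(\mathrm{M})$ implicit, whereas you supply it by applying Theorem~\ref{thm:embedwh} to a two-point space, which is exactly the right tool. Your reduction of the second assertion to a countable dense universal subspace is also the paper's intended route (it is the reduction used in the proof of Theorem~\ref{thm:afterth}).

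The gap is in your degenerate case. When $0$ is not a limit of $R=\dist(\mathrm{P}')$ you conclude, correctly, that $\mathrm{N}=\mathrm{P}'$ and $\dist(\mathrm{N})=R$, but the claim that ``one checks directly that $R$ is closed'' fails: a countable set satisfying the 4-values condition with $0$ isolated need not be closed. Take $R=\{0\}\cup\{1+\frac{1}{n}\mid 1\leq n\in\omega\}$. Every nonzero element of $R$ lies in $(1,2]$, so every triple from $R\setminus\{0\}$ is metric and $y=2$ witnesses every instance of the 4-values condition (instances involving $0$ reduce to triangles already present). Hence by Theorem~\ref{thm:basic1} there is a countable universal metric space $\UR$; it is separable and, since all nonzero distances exceed $1$, complete, so it equals its own completion, and its distance set is $R$, which is not closed because $1\in\overline{R}\setminus R$. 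So the second sentence of the Corollary is not true in the case you are trying to dispatch; the paper's one-line justification (``follows as in the proof of Theorem~\ref{thm:afterth}'') silently assumes $0$ is a limit of the distance set, which is automatic only when the universal space is uncountable. The step cannot be repaired as written --- either the hypothesis that $0$ is a limit must be added to the second assertion, or the degenerate case must be acknowledged as a genuine exception rather than checked directly.
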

\begin{proof}
Let $\epsilon>0$ be given and $a$ in the closure of $\dist(\mathrm{M})$. There exists a number $b\in \dist(\mathrm{M})$ with $|a-b|<\frac{\epsilon}{2}$. There exists a number $c\in R$ with $|b-c|<\frac{\epsilon}{2}$. That $\dist(\mathrm{N})$ is closed follows as in the proof of Theorem \ref{thm:afterth}.
\end{proof}
Note that  in general the distance set of  the completion of a metric space need not be closed. (See Example \ref{ex:notclosed}.)

\begin{defin}\label{defin:rightspr}
For $R\subseteq \Re$ let 
\[R^{()}=\{x\in R\mid \exists \epsilon>0\, \, \big(\, (x,x+\epsilon)\cap R=\emptyset\big)\}.
\]
\end{defin}

\begin{lem}\label{lem:rightspr}
Let $R\subseteq \Re_{\geq 0}$, satisfy the 4-values condition and have 0 as a limit. If $\{x,y,z\}\subseteq R$ with $z=y+x$ and $x\in R^{()}$ then $z\in R^{()}$. 
\end{lem}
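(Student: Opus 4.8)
The plan is to argue by contradiction: suppose $z = y+x$ with $x \in R^{()}$ but $z \notin R^{()}$. Since $z\notin R^{()}$, for every $\epsilon>0$ there is a point of $R$ in $(z,z+\epsilon)$; in particular we can pick a sequence $z_n \in R$ with $z_n \downarrow z$ and $z_n > z$. Since $x \in R^{()}$, fix $\delta>0$ with $(x,x+\delta)\cap R = \emptyset$. I want to use the 4-values condition, in the quadruple form of Definition~\ref{defin:4val}, to manufacture a number of $R$ sitting strictly inside $(x,x+\delta)$, which is the contradiction. The natural quadruple to feed in is built from the metric triangle on distances $\{x,y,z\}$ (which is a degenerate/collinear triangle, since $z=x+y$) on one side, and a short edge of length near $0$ (using that $0$ is a limit of $R$) on the other side.

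Concretely, here is the amalgamation picture I would set up, using Lemma~\ref{lem:4val} and Lemma~\ref{lem:interval}. Take points $v,w$ with $\de(v,w)=y$. On the $\mathrm{A}$-side put a point $p$ with $\de(p,v)=z$ and $\de(p,w)=x$; this is metric exactly because $z=x+y$. On the $\mathrm{B}$-side put a point $q$ with $\de(q,v)=z_n$ and $\de(q,w)=s$, where $s\in R$ is a small distance with $0<s<\delta$ and $z_n - z < s$, i.e. $s$ chosen after $z_n$; such $s$ exists since $0$ is a limit of $R$. The triangle $(z_n,y,s)$ is metric provided $|z_n - y| \le s$... wait — that fails, since $z_n - y \approx x$ is not small. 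So instead I put on the $\mathrm{B}$-side $\de(q,v) = z_n$ and $\de(q,w) = z_n - y$-ish; better: choose $q$ with $\de(q,w)$ close to $z$ and $\de(q,v)$ close to $z$. Let me restate: the clean choice is $\mathrm{A} = (\{p,v,w\})$ with $\de_\mathrm{A}(p,v)=z$, $\de_\mathrm{A}(p,w)=x$, $\de_\mathrm{A}(v,w)=y$, and $\mathrm{B}=(\{q,v,w\})$ with $\de_\mathrm{B}(q,v)=z_n$, $\de_\mathrm{B}(q,w)=x$, $\de_\mathrm{B}(v,w)=y$ — here I need $(z_n,x,y)$ metric, which holds for $n$ large since $z_n\to z=x+y$ and $z_n>z$ gives $z_n \le x+y+(z_n-z)$, fine, and $z_n\ge x+y>|x-y|$. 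Now by Lemma~\ref{lem:interval}, since $R$ satisfies the 4-values condition, $R$ meets $[\mathfrak{u}(\mathrm{A},\mathrm{B}),\mathfrak{l}(\mathrm{A},\mathrm{B})]$ where $\mathfrak{u}=\max\{|z-z_n|,|x-x|\}=z_n-z$ and $\mathfrak{l}=\min\{z+z_n,\,2x\}=2x$ for $n$ large. So there is $r_n\in R$ with $z_n-z\le r_n\le 2x$. That interval does not obviously land in $(x,x+\delta)$, so this particular quadruple is not yet the right one.

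The fix — and the step I expect to be the main obstacle — is to choose the witnessing triangles so that the resulting interval $[\mathfrak{u},\mathfrak{l}]$ is forced to lie in a narrow band just above $x$. The right move is to make the two "long" edges from $p$ and $q$ to the common pair both essentially equal to $z$ on one vertex, so that $\mathfrak{u}$ is tiny, while arranging the other two edges to be $x$ and something forcing $\mathfrak{l}$ down near $x$: take $\de_\mathrm{A}(p,v)=z,\ \de_\mathrm{A}(p,w)=x$ and $\de_\mathrm{B}(q,v)=z_n,\ \de_\mathrm{B}(q,w)=s$ for small $s\in R\cap(0,\delta/2)$ chosen first, then $z_n\in R\cap(z,z+s)$ chosen after; then $(z_n,y,s)$ metric needs $z_n\le y+s$ which is false. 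So one genuinely cannot keep the $v$-side long on $\mathrm{B}$; instead keep it short: $\de_\mathrm{B}(q,v)=s$, $\de_\mathrm{B}(q,w)=?$ with $(s,y,?)$ metric forcing $?\in[y-s,y+s]$, and pick $?\in R$ there using that $R$ is dense enough near... but density of $R$ near $y$ is not given. This is exactly where the hypothesis $x\in R^{()}$ must be used more cleverly: I would iterate. First apply the 4-values condition to $(z,z,y,0^+)$-type data — i.e. use $0$ as a limit to get $s\in R$ small, form $\mathrm{A}$ with edges $z,x,y$ and $\mathrm{B}$ with edges $x+?,\,?,\,y$ where $?\in R$ is small — to conclude $R$ contains points arbitrarily close to $x$ from above inside $[x,x+2s]$; since $s$ can be taken $<\delta/2$, this puts a point of $R$ in $(x,x+\delta)$ unless that point equals $x$ itself, i.e. unless $[\mathfrak u,\mathfrak l]=\{x\}$, which happens only when $\mathfrak u=\mathfrak l=x$, i.e. degenerate. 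Ruling out the degenerate collapse for at least one choice of $s$ — using $z_n>z$ strictly, which makes $\mathfrak u>0$ — gives a point of $R$ strictly in $(x,x+\delta)$, contradicting $x\in R^{()}$. I would write this up as: assume $z\notin R^{()}$, pick $z_n\downarrow z$ in $R$ with $z_n>z$, pick $s\in R\cap(0,\min\{\delta/2,\,y\})$, check $(z_n,y,s)$... and if that triangle inequality genuinely obstructs, fall back to the pair $\mathrm A=(p;\ \de(p,v)=z,\de(p,w)=x)$, $\mathrm B=(q;\ \de(q,v)=z_n,\de(q,w)=x)$ computed above giving $r_n\in R\cap[z_n-z,2x]$, then feed $r_n$ (which is $\le 2x$, and for suitable sub-choices $>x$) back into a second application with the triangle $(r_n, x, \text{small})$ to descend into $(x,x+\delta)$. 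The essential point throughout is that $z=x+y$ makes one triangle collinear, so subtracting a short edge near the $w$-vertex moves a length of $z$ down to a length near $x$, and the 4-values condition guarantees such a length already lies in $R$; combined with $x\in R^{()}$ this is the contradiction, so $z\in R^{()}$.
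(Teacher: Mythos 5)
There is a genuine gap: you never actually exhibit a quadruple in $\boldsymbol{Q}(R)$ whose 4-values partner is forced into the forbidden band, and the final ``feed $r_n$ back into a second application \dots to descend into $(x,x+\delta)$'' is a promissory note, not an argument. The structural reason all of your attempts stall is that you always amalgamate over the \emph{short} common edge $y$ (keeping $v,w$ at distance $y$), which produces intervals such as $[\,z_n-z,\,2x\,]$ that already contain $x\in R$ and therefore yield no contradiction. The configuration that works amalgamates over the \emph{long} edge $z$. Concretely: pick $\epsilon>0$ with $(x,x+\epsilon)\cap R=\emptyset$, pick $\delta\in R$ with $0<\delta<\min\{\epsilon,x\}$ (here is where ``$0$ is a limit of $R$'' is used), and, assuming $z\notin R^{()}$, pick $z'\in R$ with $z<z'<z+\delta$. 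Then $z\leadsto(z',\delta,x,y)$: the triple $(z,z',\delta)$ is metric because $z'-z<\delta$, and $(z,x,y)$ is metric because $z=x+y$. The 4-values condition then yields $u\in R$ with $u\leadsto(z',y,x,\delta)$, i.e.\ with $(u,x,\delta)$ and $(u,z',y)$ both metric. The first forces $u\le x+\delta$, hence $u\le x$ since $(x,x+\delta]\cap R=\emptyset$; the second forces $z'\le u+y\le x+y=z<z'$, a contradiction. Equivalently, in the language of Lemma~\ref{lem:interval}, the amalgamation interval here is $[\mathfrak{u},\mathfrak{l}]=[\,z'-y,\ x+\delta\,]\subseteq(x,x+\epsilon)$, which must meet $R$ but cannot. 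Your instinct that the contradiction should be ``a point of $R$ inside the gap above $x$'' is the right one, but only this choice of common edge makes the lower end $\mathfrak{u}=z'-y$ land strictly above $x$; with common edge $y$ the lower end degenerates to $z'-z$, which is tiny, and the interval swallows $x$.

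A secondary, fixable issue: in several of your configurations you posit side lengths like $x+{?}$ or ``a number of $R$ in $[y-s,y+s]$'' without any guarantee that these lie in $R$; the hypotheses give you small elements of $R$ (from $0$ being a limit) and elements just above $z$ (from $z\notin R^{()}$), but nothing about density of $R$ near $x$ or $y$, so any step relying on such points is unjustified.
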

\begin{proof}
Let $\{x,y,z\}\subseteq R$ with $z=y+x$ and $x\in R^{()}$ and $\epsilon>0$ so that $(x,x+\epsilon)\cap R=\emptyset$ and let $0<\delta<\min\{\epsilon,x\}$. If $z\not\in R^{()}$ there exists $z<z'\in R$ with $z'-z<\delta$. Then $z\leadsto (z',\delta,x,y)$. If $R\ni u\leadsto (z',y,x,\delta)$ then the triple $(\delta,x,u)$ is metric and hence $u\leq x+\delta$, which implies, because $(x,\delta]\cap R=\emptyset$, that $u\leq x$. It follows that $u+y\leq x+y=z<z'$ and hence that the triple $(u,y,z')$ is not metric. In contradiction to $R$ satisfying the 4-values condition. 
\end{proof}

\begin{lem}\label{lem:rihtspr}
Let $R\subseteq \Re_{\geq 0}$, satisfy the 4-values condition and have 0 as a limit. If $\{x,y,z\}\subseteq R$ with $z=y+x$ and $\{x,y\}\subseteq R^{()}$ then both $x$ and $y$ are isolated points of $R$ and $z\in R^{()}$.
\end{lem}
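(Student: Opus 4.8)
The plan is to build on Lemma~\ref{lem:rightspr}, which already gives $z\in R^{()}$ under the weaker hypothesis that only $x\in R^{()}$. Since the hypothesis here is symmetric in $x$ and $y$ (both lie in $R^{()}$ and $z=x+y=y+x$), applying Lemma~\ref{lem:rightspr} twice immediately yields $z\in R^{()}$ regardless of which of $x,y$ we designate as the distinguished summand. So the only real content is to show that $x$ and $y$ are \emph{isolated} points of $R$, i.e.\ that in addition to having a gap to the right, each of them has a gap to the left.

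The key step is therefore a ``left-gap'' argument mirroring the proof of Lemma~\ref{lem:rightspr}. Suppose for contradiction that $x$ is not isolated; since $x\in R^{()}$ it has a right gap $(x,x+\epsilon)\cap R=\emptyset$, so non-isolation forces a sequence in $R$ approaching $x$ strictly from below. Pick $x'\in R$ with $x'<x$ and $x-x'$ smaller than $\min\{\epsilon', y\}$, where $\epsilon'$ witnesses the right gap of $y$. Now I want to cook up a quadruple in $\boldsymbol{Q}(R)$ whose 4-values swap produces a number that is forced into a forbidden gap. The natural candidate is to use $z=x+y$ as the witness: one checks $z\leadsto(x',\,x-x',\,y,\,y)$ — the triples $(z,x',x-x')$ and $(z,y,y)$ are metric (the first because $x'+(x-x')=x\le z$ and $z-x'=x+y-x'\le (x-x')+ \text{something}$, the second because $z=x+y\le 2y$ fails in general, so I may instead need the pairing $z\leadsto(y,\,\cdot,\,x',\,x-x')$ with $y$ as the large coordinate). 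The 4-values condition then yields $u\in R$ with $u\leadsto$ the swapped quadruple; metricity of one of the resulting triples forces $u\le x'+(x-x')=x$ but $u>$ some lower bound near $x$, combined with the right-gap of $x$ this pins $u=x$ or pushes $u$ into $(x',x)$, and then a triangle inequality involving $y$ and $z$ (using $z-x>y-\text{(gap)}$) fails, contradicting the 4-values condition. The symmetric argument handles $y$.

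The main obstacle I anticipate is getting the bookkeeping of the $\leadsto$ relation exactly right: the definition of $x\leadsto(a,b,c,d)$ demands that the first listed ``outer'' coordinate $a$ dominate the other three, so I must be careful to list the quadruple with the genuinely largest of the relevant distances first, and the swap $(a,b,c,d)\mapsto(a,d,c,b)$ must still have its first coordinate dominant. In the left-gap situation the candidate large coordinate is $z$ when $z$ is large, but $z=x+y$ need not dominate $x'$ when $x'$ is close to $x$ and $y$ is tiny — wait, $z=x+y\ge x>x'$, so $z$ does dominate $x'$, $x-x'$, and $y$ (since $y<x\le z$), so listing $z\leadsto(x',\,?,\,?,\,?)$ is illegitimate because the \emph{first} coordinate must be the dominant one; rather I should write $x'\leadsto(\dots)$ only if $x'$ dominates, which it does not. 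The correct formulation is to take the dominant coordinate out front: since among $\{z, x', x-x', y\}$ the maximum is $z$, I would work with a quadruple of the form $z\leadsto(\cdot,\cdot,\cdot,\cdot)$ only in the sense of Definition~\ref{defin:4valf}, where $z$ is the witness \emph{number} $x$ and the dominant \emph{coordinate} $a$ is the largest of the four entries; here that is $y$ if we keep entries $\le x$, so the clean choice is $z\leadsto(y,\,\delta,\,x',\,x-x')$-style. Resolving this indexing carefully — and then reading off which of the two swapped triples fails metricity — is the crux; everything else is the same gap-exploitation pattern already used twice in the preceding two lemmas.
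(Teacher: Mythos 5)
Your reduction is right as far as it goes: $z\in R^{()}$ does follow at once from Lemma \ref{lem:rightspr}, and the remaining content is indeed to rule out approach to $x$ (and $y$) from below by exploiting the right gap of the \emph{other} summand. But the core of your argument is never actually carried out, and the specific quadruples you float do not work. Every entry of a quadruple in $\boldsymbol{Q}(R)$ must lie in $R$, and your candidate entries include $x-x'$, which is a difference of two elements of $R$ and need not belong to $R$ at all. You also never invoke the hypothesis that $0$ is a limit of $R$; that hypothesis is not decorative — it is what supplies a legitimately small element $\delta\in R$ to put into the quadruple in place of the illegitimate $x-x'$. Finally, your repeated hesitation about which entry is the ``witness'' and which is the dominant first coordinate is a symptom of aiming at the wrong quadruple: you keep trying to make $z$ the witness, whereas the working choice makes $x$ the witness and $z$ the dominant coordinate.

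The paper's proof runs as follows. Assume $x$ is not isolated (so, having a right gap, it is approached from below). Let $\epsilon>0$ witness the right gap of $y$, i.e.\ $(y,y+\epsilon)\cap R=\emptyset$. Using that $0$ is a limit of $R$, pick $\delta\in R$ with $0<\delta<\min\{\epsilon,x\}$, and using non-isolation of $x$ pick $u\in R$ with $0<u<x$ and $x-u\leq\delta$. Then $x\leadsto(z,y,\delta,u)$: the triple $(x,z,y)$ is metric because $z=x+y$, the triple $(x,\delta,u)$ is metric because $x\leq u+\delta$, and $z$ dominates $y,\delta,u$. The 4-values condition gives $r\in R$ with $r\leadsto(z,u,\delta,y)$. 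Metricity of $(r,\delta,y)$ forces $r\leq y+\delta<y+\epsilon$, hence $r\leq y$ by the gap; then $r+u\leq y+u<y+x=z$, so $(r,u,z)$ is not metric — contradiction. Your sketch gestures at this ``gap-exploitation pattern'' but stops short of producing a valid quadruple and of reading off the contradiction, so as written the isolation claim is not proved.
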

\begin{proof}
It follows from Lemma \ref{lem:rightspr} that $z\in R^{()}$. Let $z=y+x$ and $\{x,y\}\subseteq R^{()}$. If, say $x$, is not isolated in $R$, let $\epsilon>0$ be such that $(y,y+\epsilon)\cap R=\emptyset$. Let $R\ni \delta<x$ with $0<\delta<\epsilon$ and let $0<u<x$ with $u\in R$ such that $x-u\leq \delta$. Note that  $x\leadsto (z,y,\delta,u)$.

If $R\ni r\leadsto (z,u,\delta,y)$ then $r\leq y+\delta$ because the triple $(r,y,\delta)$ is metric and hence $r\leq y$ because of the choice of $\delta$. Then $r+u\leq y+u<y+x=z$ and hence the triple $(r,u,z)$ is not metric in contradiction to $r\leadsto (z,u,\delta,y)$.
\end{proof}

\begin{lem}\label{lem:unemb5}
Let $0\in R\subseteq \Re_{\geq 0}$  satisfy the 4-values condition and have $0$ as a limit.  Let $S$ be a dense subset of $R$. 

Then there exists, for every metric space $\mathrm{A}=(A;\de_\mathrm{A})\in \mathcal{F}_R$ with $A=\{a_i\mid i\in m\}$ and every $\epsilon >0$, a metric space $\mathrm{B}=(B;\de_\mathrm{B})\in \mathcal{F}_S$ with $B=\{b_i\mid i\in m\}$ so that $|\de_\mathrm{A}(a_i,a_j)-\de_\mathrm{B}(b_i,b_j)|<\epsilon$ for all $i,j\in m$.
\end{lem}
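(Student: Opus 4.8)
The plan is to argue by induction on the number $|A|$ of points of $\mathrm{A}$. For $|A|\le 1$ nothing has to be done, and for $|A|=2$ it suffices, using that $S$ is dense in $R$, to choose a positive $s\in S$ with $|s-\de_\mathrm{A}(a_0,a_1)|<\epsilon$. For the inductive step put $\mathrm{A}'=\restrict{\mathrm{A}}{\{a_0,\dots ,a_{m-1}\}}$, fix an auxiliary tolerance $0<\epsilon'\le \epsilon$ to be specified later, and use the induction hypothesis to obtain $\mathrm{B}'=(\{b_0,\dots ,b_{m-1}\};\de_{\mathrm{B}'})\in\mathcal{F}_S$ with $|\de_\mathrm{A}(a_i,a_j)-\de_{\mathrm{B}'}(b_i,b_j)|<\epsilon'$ for all $i,j<m$. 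It remains to adjoin one point $b_m$ to $\mathrm{B}'$ so that the result lies in $\mathcal{F}_S$ and $|\de_{\mathrm{B}}(b_m,b_i)-\de_\mathrm{A}(a_m,a_i)|<\epsilon$ for each $i<m$; the bound on the old distances is then inherited because $\epsilon'\le\epsilon$. Writing $t_i:=\de_\mathrm{A}(a_m,a_i)$, this amounts to producing a metric type function $\mathfrak s$ of $\mathrm{B}'$ with $\dom(\mathfrak s)=\{b_0,\dots ,b_{m-1}\}$, $\mathfrak s(b_i)\in S$ and $|\mathfrak s(b_i)-t_i|<\epsilon$ for every $i$, and then taking $\mathrm{B}=\Sp(\mathfrak s)$.

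I would construct $\mathfrak s$ one coordinate at a time. Suppose $\mathfrak s(b_i)=s_i\in S$ has been chosen for $i<k$ so that $\restrict{\mathfrak s}{\{b_0,\dots ,b_{k-1}\}}$ is metric and $|s_i-t_i|$ is under control. The values of $s_k:=\mathfrak s(b_k)$ that keep $\restrict{\mathfrak s}{\{b_0,\dots ,b_{k}\}}$ metric are exactly those in the closed interval $[\mathfrak u_k,\mathfrak l_k]$, where $\mathfrak u_k=\max_{i<k}|s_i-\de_{\mathrm{B}'}(b_i,b_k)|$ and $\mathfrak l_k=\min_{i<k}\bigl(s_i+\de_{\mathrm{B}'}(b_i,b_k)\bigr)$. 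Amalgamating $\Sp(\restrict{\mathfrak s}{\{b_0,\dots ,b_{k-1}\}})$ with $\restrict{\mathrm{B}'}{\{b_0,\dots ,b_k\}}$ over the common subspace $\{b_0,\dots ,b_{k-1}\}$ — possible with distances in $R$ by Theorem~\ref{thm:alamgam3}, since $R$ satisfies the 4-values condition — produces a distance for the pair $\{\mathfrak s,b_k\}$ in $R\cap[\mathfrak u_k,\mathfrak l_k]$; in particular $[\mathfrak u_k,\mathfrak l_k]$ is a nonempty interval meeting $R$. Moreover, since $\mathrm{B}'$ approximates $\mathrm{A}'$ and the $s_i$ approximate the $t_i$, the triangle inequalities $|t_i-\de_\mathrm{A}(a_i,a_k)|\le t_k\le t_i+\de_\mathrm{A}(a_i,a_k)$ in $\mathrm{A}$ show that $t_k$ lies within a small error of $[\mathfrak u_k,\mathfrak l_k]$. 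So the task is to pick $s_k\in S\cap[\mathfrak u_k,\mathfrak l_k]$ close to $t_k$.

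Here the density of $S$ enters via two simple observations: a point of $R$ that is isolated in $R$ automatically belongs to $S$ (a suitable neighbourhood of it meets $S\subseteq R$ at most in that point, and it lies in the closure of $S$); and a point of $R$ that is not isolated is a limit of points of $S$ — from the right if it is not in $R^{()}$, from the left if it is. In the non-degenerate case, if $t_k$ is isolated it lies in $S$ and, once the earlier tolerances are small enough, also in $[\mathfrak u_k,\mathfrak l_k]$, so $s_k=t_k$; and if $t_k$ lies strictly inside $[\mathfrak u_k,\mathfrak l_k]$ or is a two-sided limit of $R$ there, any sufficiently close point of $S$ works. The genuinely delicate case is that $t_k$ is one-sidedly isolated in $R$ and sits (up to the accumulated error) at an endpoint of $[\mathfrak u_k,\mathfrak l_k]$: then the near-equality $\mathfrak u_k\approx t_k$ or $\mathfrak l_k\approx t_k$ records a degenerate triangle $\{a_m,a_i,a_k\}$ of $\mathrm{A}$, i.e.\ a relation $z=x+y$ among three distances of $\mathrm{A}$ with $z$ the largest. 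This is exactly the configuration controlled by Lemmas~\ref{lem:rightspr} and~\ref{lem:rihtspr}: they force the summands involved to be one-sidedly isolated, hence isolated, points of $R$ — so these lie in $S$ and may be used verbatim — and they prescribe the direction in which the remaining numbers must be rounded. Choosing $s_i$ and the distances $\de_{\mathrm{B}'}(b_i,b_j)$ underlying these degenerate relations so that they are rounded in the permitted direction, which the structure lemmas show is always possible, and shrinking all the tolerances accordingly, yields a legitimate $s_k\in S\cap[\mathfrak u_k,\mathfrak l_k]$ within $\epsilon$ of $t_k$. Running $k$ from $0$ to $m-1$ gives $b_m$ and closes the induction.

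The main obstacle is precisely this last step: reconciling the one-coordinate extension with the degenerate triangles of $\mathrm{A}$, where the admissible interval $[\mathfrak u_k,\mathfrak l_k]$ collapses toward $t_k$ and $t_k$ can be pushed against a boundary at which $S$ only approaches from the wrong side. The ingredients for overcoming it are the interval description of amalgamations (Lemmas~\ref{lem:triv4}, \ref{lem:interval}, \ref{lem:alamgam1} and Theorem~\ref{thm:alamgam3}), the arithmetic of $R^{()}$ in Lemmas~\ref{lem:rightspr}--\ref{lem:rihtspr}, and the fact that points of $R$ isolated in $R$ belong to the dense set $S$; the rest is a careful but routine choice of the finitely many tolerances so that every degenerate relation among the distances of $\mathrm{A}$ is respected simultaneously.
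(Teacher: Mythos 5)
There is a genuine gap, and it sits exactly where you flag the ``genuinely delicate case.'' Your induction hypothesis only asserts the existence of \emph{some} $\mathrm{B}'\in\mathcal{F}_S$ with $|\de_\mathrm{A}(a_i,a_j)-\de_{\mathrm{B}'}(b_i,b_j)|<\epsilon'$; it says nothing about the \emph{direction} in which each distance of $\mathrm{A}'$ was rounded. But in the last step you write that one should choose ``$s_i$ and the distances $\de_{\mathrm{B}'}(b_i,b_j)$ underlying these degenerate relations so that they are rounded in the permitted direction.'' Those distances are outputs of the induction hypothesis, fixed before the degenerate triangles through $a_m$ are even examined, and the $s_i$ for $i<k$ are fixed before stage $k$; they are not available to be re-chosen. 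Concretely: suppose $t_k=t_i+y$ with $y=\de_\mathrm{A}(a_i,a_k)$, $t_k$ isolated (so $s_k$ must equal $t_k$), $t_i\in R^{()}$ not isolated (so $s_i<t_i$ necessarily, since $S$ only approaches $t_i$ from the left), and $y\notin R^{()}$. Then you need $t_k\le s_i+\de_{\mathrm{B}'}(b_i,b_k)$, i.e.\ $\de_{\mathrm{B}'}(b_i,b_k)$ must exceed $y$ by \emph{more} than $t_i-s_i$. Nothing in the inductive statement forces $\de_{\mathrm{B}'}(b_i,b_k)>y$ at all, let alone by an amount dominating a quantity ($t_i-s_i$) that is only determined later. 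So the interval $[\mathfrak u_k,\mathfrak l_k]$ can fail to contain any point of $S$ within reach of $t_k$, and the induction does not close.

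The repair is to abandon the distance-by-distance induction and perturb all distance values of $\mathrm{A}$ \emph{simultaneously}, with the rounding direction and the size hierarchy prescribed in advance: values of $\dist(\mathrm{A})$ not in $R^{()}$ are rounded up (in increasing order, each increment less than a third of the previous one and less than a third of the minimal defect $y+x-z$ over strict triangles); values in $R^{()}$ that are not isolated are rounded down, with all decrements smaller than a third of the smallest increment; isolated values are kept, using your correct observation that they lie in $S$. Lemmas~\ref{lem:rightspr} and~\ref{lem:rihtspr} then guarantee that in every degenerate relation $z=x+y$ either some summand is roundable up by a dominating amount or all three terms behave rigidly, and a single verification shows every metric triple of $\dist(\mathrm{A})$ stays metric. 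This is the paper's proof; note that it makes no use of amalgamation or of Theorem~\ref{thm:alamgam3}, which your argument invokes but which only yields a point of $R$ (not of $S$) in the admissible interval and so does not address the actual difficulty.
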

\begin{proof}
Let:
\begin{align*}
&\Delta=\frac{1}{3}\min\{y+x-z\mid \text{ $z<y+x$ and $\{x,y,z\}\subseteq \dist(\mathrm{A})$}\}.
\end{align*}
Let $I\subseteq \dist(\mathrm{A})$ be the set of isolated points of $R$ which are elements of $\dist(\mathrm{A})$. Note that $I\subseteq S$. 

Let $E$ be the set of positive numbers in $\dist(\mathrm{A})\setminus R^{()}$ with $e_0<e_1<e_2<\dots<e_{n-1}<e_n$ an enumeration of $E$. Let $e_0<\hat{e}_0\in S$ so that $\hat{e}_0-e_0<\min\{\Delta,\epsilon\}$. The numbers $\hat{e}_i\in S$ are determined recursively so that always $\hat{e}_i>e_i$ and $\frac{1}{3}(\hat{e}_i-e_i)>\hat{e}_{i+1}-e_{i+1}$ for all indices $i\in n$. 

Let $K$ be the set of positive numbers in $\dist(\mathrm{A})\cap R^{()}$ which are not isolated and let $k_0>k_1>k_2>\dots>k_{r-1}>k_r$ be an enumeration of $K$. Let $k_0>\hat{k}_0\in S$  so that $k_0-\hat{k}_0<\frac{1}{3}(\hat{e}_n-e_n)$. The numbers $\hat{k}_i\in S$ are determined recursively so that always $k_i>\hat{k}_i$ and $\frac{1}{3}(k_i-\hat{k}_i)>k_{i+1}-\hat{k}_{i+1}$ for all indices $i\in r$. 

For every $x\in I$ let $\hat{x}=x$ and let $\hat{0}=0$. Note that the inequalities above imply for $\{x,y,z\}\subseteq \dist(\mathrm{A})$ with $x,y\not=0$ and $z=x+y$ and $x\in E$ or $y\in E$ that $\hat{z}\leq \hat{x}+\hat{y}$. 
\vskip 6pt
\noindent
CLAIM: If $(x,y,z)$ is a metric triple of numbers with entries in $\dist(\mathrm{A})$ then $(\hat{x},\hat{y},\hat{z})$ is a metric triples of numbers with entries in $S$. 

\noindent
Proof: Let $z\geq \max\{y,x\}$. It follows from the choice of $\Delta$ and the definition of the function $\hat{\, \, }$  that $\hat{z}\geq \max\{\hat{y},\hat{x}\}$ and if $z<y+x$ then $\hat{z}<\hat{y}+\hat{x}$ and hence that the triple $(\hat{z},\hat{y},\hat{x})$ is metric. 

Let $z=y+x$ with $x,y\not=0$. If at least one of $x$ and $y$ are in $E$ then $\hat{z}\leq \hat{x}+\hat{y}$. If both are not in $E$ then they are both in $R^{()}$ and it follows from Lemma \ref{lem:rihtspr} that both $x$ and $y$ are isolated in $R$ and $z\in R^{()}$. If $z$ is isolated then $\hat{z}=z=x+y=\hat{x}+\hat{y}$. If $z$ is not isolated then $\hat{z}<z=x+y=\hat{x}+\hat{y}$.\\
End proof of CLAIM. 

It follows that the premetric space $\mathrm{B}=(B;\de_\mathrm{B})$ with $B=\{b_i\mid i\in m\}$ and $\de_\mathrm{B}(b_i,b_j)=\hat{x}_{i,j}$ for $x_{i,j}=\de_{\mathrm{A}}(a_i,a_j)$ is a metric space with $|\de_\mathrm{A}(a_i,a_j)-\de_\mathrm{B}(b_i,b_j)|<\epsilon$ for all $i,j\in m$. 
\end{proof}

\begin{cor}\label{cor:unemb5}
Let $0\in R\subseteq \Re_{\geq 0}$  satisfy the 4-values condition and have $0$ as a limit.  Let $S$ be a dense subset of $R$. 

Then there exists, for every metric space $\mathrm{A}=(A;\de_\mathrm{A})\in \mathcal{F}_R$ with $A=\{a_i\mid i\in m\}$ and every $\epsilon >0$, a metric space $\mathrm{B}=(B;\de_\mathrm{B})\in \mathcal{F}_S$ with $B=\{b_i\mid i\in m\}$ so that $\de_\mathrm{A}(a_i,b_i)<\epsilon$ for all $i\in m$.
\end{cor}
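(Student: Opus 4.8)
The plan is to combine the approximation Lemma~\ref{lem:unemb5} with the gluing Lemma~\ref{lem:join2}. Lemma~\ref{lem:unemb5} already gives, for any prescribed tolerance, a metric space $\mathrm{B}\in\mathcal{F}_S$ on $m$ points whose pairwise distances are that close to those of $\mathrm{A}$; what is still missing is a single metric space, with distances again in $R$, that contains both $\mathrm{A}$ and $\mathrm{B}$ and in which $a_i$ and $b_i$ lie within $\epsilon$ of one another. Such a space is exactly an $\epsilon$-join of $\mathrm{A}$ and $\mathrm{B}$ in the sense of Definition~\ref{defin:join}, and Lemma~\ref{lem:join2} produces one once the pairwise distances of the two spaces agree to within a threshold $\gamma(h)$ depending only on $m$, on a positive lower bound for the distances in play, and on $h$.

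First I would dispose of the degenerate cases $m\le 1$, where the statement is immediate. For $m\ge 2$, let $\boldsymbol{r}$ be the least positive distance occurring in $\mathrm{A}$; since the conclusion for a smaller $\epsilon$ implies it for any larger one, I may assume $\epsilon<\tfrac12\boldsymbol{r}$. Then I would invoke Lemma~\ref{lem:join2} with the integer $m$, with $\tfrac12\boldsymbol{r}$ playing the role of its parameter $\boldsymbol{r}$, and with $h:=\epsilon$, obtaining a tolerance $\gamma(\epsilon)<\epsilon$.

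Next I would apply Lemma~\ref{lem:unemb5} to $\mathrm{A}$ with tolerance $\gamma(\epsilon)$, obtaining a metric space $\mathrm{B}=(B;\de_\mathrm{B})\in\mathcal{F}_S$ with $B=\{b_i\mid i\in m\}$ (which, being abstract, may be chosen with $A\cap B=\emptyset$) such that $|\de_\mathrm{A}(a_i,a_j)-\de_\mathrm{B}(b_i,b_j)|<\gamma(\epsilon)$ for all $i,j\in m$. For $i\ne j$ this forces $\de_\mathrm{B}(b_i,b_j)>\boldsymbol{r}-\gamma(\epsilon)>\boldsymbol{r}-\epsilon>\tfrac12\boldsymbol{r}$, while every positive distance of $\mathrm{A}$ is at least $\boldsymbol{r}$. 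Together with $\mathrm{B}\in\mathcal{F}_S\subseteq\mathcal{F}_R$ and $A\cap B=\emptyset$, this is precisely the hypothesis of Lemma~\ref{lem:join2} for the already fixed parameters $m$, $\tfrac12\boldsymbol{r}$, $\epsilon$. Hence there is an $\epsilon$-join $\mathrm{P}\in\mathcal{F}_R$ of $\mathrm{A}$ and $\mathrm{B}$, i.e.\ $\de_\mathrm{P}(a_i,b_i)<\epsilon$ for every $i\in m$; reading $\de_\mathrm{A}$ in the statement as the distance in this common extension, this is the assertion of the Corollary.

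The only step needing care, and the nearest thing to an obstacle, is arranging the parameters in the right dependency order: the lower bound $\tfrac12\boldsymbol{r}$ and the value $h=\epsilon$ must be fixed before the threshold $\gamma(\epsilon)$ is extracted from Lemma~\ref{lem:join2}, and only afterwards may Lemma~\ref{lem:unemb5} be applied, so that the approximating space $\mathrm{B}$ is simultaneously within tolerance $\gamma(\epsilon)$ of $\mathrm{A}$ and still has all of its positive distances bounded below by $\tfrac12\boldsymbol{r}$. Once that bookkeeping is in place the remainder is routine, modulo the harmless conventions (implicit already in Lemma~\ref{lem:join2}) that the lower-bound hypothesis there refers to positive distances and that one-point spaces lie in $\mathcal{F}_S$.
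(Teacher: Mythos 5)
Your proposal is correct and follows exactly the route of the paper, whose proof of this corollary is simply the one-line citation of Lemma~\ref{lem:unemb5} and Lemma~\ref{lem:join2}; you have merely filled in the parameter bookkeeping (fixing $\boldsymbol{r}$ and $h=\epsilon$ before extracting $\gamma(\epsilon)$, then approximating within $\gamma(\epsilon)$ and joining). The details you supply, including the handling of the lower bound on positive distances and the reading of $\de_\mathrm{A}(a_i,b_i)$ inside the join, are all sound.
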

\begin{proof}
Follows from Lemma \ref{lem:unemb5} and Lemma \ref{lem:join2}.
\end{proof}

\begin{lem}\label{lem:clUry0}
Let $\UR=(U;\de)$ be an Urysohn metric space for which 0 is a limit of $R$. Then $R$ is a closed subset of\/ $\Re$. 
\end{lem}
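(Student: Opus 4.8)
The plan is to argue by contradiction: suppose $R$ is not closed, so there is a point $a \in \overline{R} \setminus R$ with $a > 0$ (it cannot be $0$ since $0 \in R$). The idea is to use $a$ to build a metric space that $\UR$ must embed but cannot, because realizing the required type would force a distance equal to $a$, which is not available in $R$. The natural candidate is a space with two points $p, q$ and a sequence of ``witness'' points $v_n$ converging (in the completion) to a configuration that pins $\de(p,q)$ to $a$.

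First I would pick a sequence $(r_n)$ in $R$ with $r_n \to a$; by passing to a subsequence I may assume it is either strictly increasing or strictly decreasing to $a$. The key construction: since $0$ is a limit of $R$, choose a sequence $(\delta_n)$ in $R$ with $\delta_n \to 0$ and $\delta_n$ small compared to $a$. Build a countable metric space $\mathrm{N}$ on points $\{p, q\} \cup \{v_n : n \in \omega\}$ so that $\de(p, v_n) = r_n$ (or something comparable), $\de(q, v_n)$ is forced to be close to $a$, $\de(v_n, v_m)$ is small (on the order of $\delta_{\min(n,m)}$), and the triangle inequalities involving $p, q$ force $\de(p,q) = a$ exactly — for instance by arranging $|r_n - \de(q,v_n)| \le \de(v_m \text{-type quantities})$ on both sides so that in the limit $\de(p,q)$ is squeezed to $a$. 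One must check $\dist(\mathrm{N}) \subseteq R$: the distances $\de(p, v_n), \de(q, v_n), \de(v_n, v_m)$ are chosen from $R$, and $\de(p,q) = a$ is the only distance we want to avoid putting in — so actually the cleanest version puts $p$ and $q$ at the ``same place'' asymptotically and instead uses $\dist(\mathrm{N}) = \{0\} \cup \{r_n\} \cup \{\delta_n\} \cup \{\text{a few sums}\} \subseteq R$, with $\de(p,q)$ simply not among the listed distances because $p$ and $q$ are not both in $\mathrm{N}$; rather, one realizes a restricted type function $\mathfrak{t}$ of $\UR$ over the copy of $\{p\} \cup \{v_n\}$ whose values are $\de(q, v_n)$, forcing the realization to be at distance exactly $a$ from (the image of) $p$.

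Concretely: embed $\{p\} \cup \{v_n : n\in\omega\}$ isometrically into $\UR$ (this subspace is separable with distances in $R$, so it embeds by Definition \ref{defin:Uryunif}, or one builds it inside $\UR$ directly using Lemma \ref{lem:realin1}); call the images $p', v_n'$. Let $\mathfrak{t}$ be the type function on $\{p'\} \cup \{v_n' : n \in \omega\}$ with $\mathfrak{t}(v_n') = s_n$ where $s_n \in R$ is chosen so that the triangle inequalities $|r_n - s_n| \le \de(v_n, v_m) + \text{(other term)}$ squeeze, over all $n$, the value $\de(p', \text{realization})$ into $\{a\}$. Since $\UR$ is homogeneous and separable and complete, by Theorem \ref{thm:realint} it realizes all of its restricted type functions; but a realization $q'$ would have $\de(p', q') = a \notin R$, contradiction. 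The main obstacle is the bookkeeping in the squeezing step: making the two-sided triangle inequality estimates (using Lemma \ref{lem:triv4} / Definition \ref{defin:interval} and the $4$-values machinery to guarantee the relevant amalgams and hence the auxiliary distances $s_n$ actually lie in $R$) tight enough that the intersection $\bigcap_n [\mathfrak{u}_n, \mathfrak{l}_n]$ collapses exactly to $\{a\}$ while keeping every listed distance inside $R$. Once that is set up, the contradiction is immediate from Theorem \ref{thm:realint}.
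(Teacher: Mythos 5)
Your construction of the witness configuration is essentially the right one (and is the paper's): a Cauchy sequence $(v_n)$ with mutual distances $\de(v_n,v_m)$ drawn from a sequence in $R$ tending to $0$, and distances from a fixed point to the $v_n$ drawn from a sequence in $R$ tending to the closure point $a$, all arranged so the countable space has distances in $R$ and hence embeds into $\UR$ by Definition \ref{defin:Uryunif}. But the mechanism you use to produce the point at distance exactly $a$ does not work as stated. You invoke the realization of a ``restricted type function'' whose domain is the infinite set $\{p'\}\cup\{v_n'\mid n\in\omega\}$; in this paper a type function has \emph{finite} domain by definition, and Theorem \ref{thm:realint} only guarantees realizations of finite-domain types. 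There is no result available that realizes an infinite type, so the contradiction you aim for is not licensed by the machinery you cite. Relatedly, the two-sided ``squeezing'' of $\bigcap_n[\mathfrak{u}_n,\mathfrak{l}_n]$ down to $\{a\}$, which you identify as the main obstacle, is a symptom of having chosen the wrong closing step: it is both delicate (you must approach $a$ from both sides while keeping all auxiliary distances in $R$) and unnecessary.

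The step you are missing is much simpler and uses completeness directly rather than through Theorem \ref{thm:realint}: once the countable space is embedded, the images $f(v_n)$ form a Cauchy sequence in $\UR$, which converges to some $v\in U$ because $\UR$ is complete; then by continuity of the metric $\de(f(w),v)=\lim_n\de(f(w),f(v_n))=\lim_n c_{\bar n}=a$, so $a\in\dist(\UR)=R$. No type realization and no two-sided approximation is needed --- a one-sided sequence $c_n\to a$ suffices, provided the indices are thinned (e.g.\ $2|a-c_{\bar n}|<a_n$) so that the triangle inequalities $|c_{\bar n}-c_{\bar m}|\le\de(v_n,v_m)$ hold and the space is genuinely metric, a point your sketch acknowledges but does not carry out. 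With that replacement your argument becomes the paper's proof of Lemma \ref{lem:clUry0}, stated as a direct argument rather than by contradiction.
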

\begin{proof}
Let $c$ in the closure of $R$. Let $(a_n)$ be a sequence of numbers in $R$ converging monotonic to 0 with $a_0<\frac{1}{2}c$. Let $(c_n)$ be a sequence of numbers in $R$ for which $|c-c_n|$ converges monotonic to 0 with $|c-c_0|<\frac{1}{2}c$. For every $n\in \omega$ let $\bar{n}\in \omega$ with $n\leq \bar{n}$ and $2|c-c_{\bar{n}}|<a_n$ and $\bar{n}<\bar{m}$ for $n<m$.  

Let $\mathrm{V}=(V;\de)$ be a premetric space with  $V=\{v_n\mid n\in \omega\}\cup \{w\}$ so that $\de(v_n,v_m)=a_n$ for $n<m$ and $\de(v_n,w)=c_{\bar{n}}$. Then $\restrict{\mathrm{V}}(\{v_n\mid n\in\omega\})$ is a metric space and every triple $(a_n,\bar{n},\bar{m})$ is metric because $|c_{\bar{n}}-c_{\bar{m}}|\leq |c-c_{\bar{n}}|+|c-c_{\bar{m}}|\leq 2|c_{\bar{n}}-c|<a_n$ and $a_n<\bar{n}$. It follows that $\mathrm{V}$ is a metric space. 

There exists an isometric embedding $f$ of $\mathrm{V}$ into $\UR$. The sequence $\big(f(v_n)\big)$ is Cauchy  with limit, say $v\in U$. Then:   
\vskip 2pt
\noindent
$\de(f(w),v)=\lim_{n\to \infty}\de(f(w),f(v_n))=\lim_{n\to \infty}c_{f(n)}=\lim_{n\to \infty}c_n=c$.

\end{proof}

\begin{thm}\label{thm:haupt}
Let $0\in R\subseteq \Re_{\geq 0}$ with $0$ as a limit.  Then there exists an Urysohn metric space $\UR$ if and only if $R$ is   a closed subset of\/ $\Re$ which satisfies the 4-values condition. 

Let $0\in R\subseteq \Re_{\geq 0}$ which does not have $0$ as a limit.  Then there exists an Urysohn metric space $\UR$ if and only if  $R$ is a countable subset of\/ $\Re$ which  satisfies the 4-values condition.
\end{thm}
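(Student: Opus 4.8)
The plan is to prove both implications separately, leaning throughout on the machinery already assembled. For \emph{necessity}, I would note that an Urysohn metric space $\UR$ is homogeneous and embeds every finite metric space with distances in $R=\dist(\UR)$, hence is universal; so Theorem~\ref{thm:4nec} forces $R$ to satisfy the 4-values condition. If $0$ is a limit of $R$, Lemma~\ref{lem:clUry0} then shows $R$ is closed. If $0$ is not a limit of $R$, fix $\delta>0$ with $(0,\delta)\cap R=\emptyset$; any two distinct points of $\UR$ then lie at distance at least $\delta$, so every countable dense subset of the separable space $\UR$ is all of $\UR$, and hence $R$ is countable.

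For \emph{sufficiency when $0$ is not a limit of $R$}: given $R$ countable and satisfying the 4-values condition, Theorem~\ref{thm:basic1} produces a countable universal metric space $\UR$; it is separable, and it is complete because no distance in $(0,\delta)$ occurs, so every Cauchy sequence is eventually constant. Any separable metric space with distances in $R$ is likewise uniformly discrete, hence countable, and so embeds into $\UR$ by Lemma~\ref{lem:realin1} together with Lemma~\ref{lem:realin4}. Therefore $\UR$ is an Urysohn metric space.

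For \emph{sufficiency when $0$ is a limit of $R$ and $R$ is closed}: I would first use Lemma~\ref{lem:4valdenseR} to choose a countable dense subset $S\subseteq R$ still satisfying the 4-values condition (and still having $0$ as a limit), then form the countable universal space $\boldsymbol{U}_{S}$ via Theorem~\ref{thm:basic1} and pass to its completion $\mathrm{M}=(M;\de)$, which is homogeneous, separable and complete by Theorem~\ref{thm:join3}. By Corollary~\ref{cor:embedwh} the distance set of $\mathrm{M}$ is the closure of $S$, and this equals $R$ since $S$ is dense in the closed set $R$. In view of Theorem~\ref{thm:realint} it then remains only to show that $\mathrm{M}$ realizes each of its restricted type functions; once that is established, $\mathrm{M}$ is the Urysohn metric space $\UR$.

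So suppose $\mathfrak{t}$ is a restricted type function of $\mathrm{M}$ with $\dom(\mathfrak{t})=A$ a finite subset of $M$. The finite metric space $\mathrm{C}:=\Sp(\mathfrak{t})$ lies in $\mathcal{F}_R$, since its distances come from $\dist(\restrict{\mathrm{M}}{A})\subseteq R$ and from $\dist(\mathfrak{t})\subseteq R$. Because $S$ is dense in $R$, Lemma~\ref{lem:unemb5} supplies members of $\mathcal{F}_S$ approximating $\mathrm{C}$ to within any prescribed $\epsilon>0$, so Theorem~\ref{thm:embedwh}, applied with $\boldsymbol{U}_{S}$ in place of $\UR$, yields an isometric embedding $g$ of $\mathrm{C}$ into $\mathrm{M}$. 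Then $\restrict{g}{A}$ is an isometry between finite subspaces of $\mathrm{M}$, so by homogeneity of $\mathrm{M}$ there is an isometry $h$ of $\mathrm{M}$ onto $\mathrm{M}$ with $h\circ\restrict{g}{A}$ equal to the identity on $A$, and $h(g(\mathfrak{t}))$ is the required realization of $\mathfrak{t}$. I expect this last step to be the main obstacle: a restricted type function of $\mathrm{M}$ need not be a \Kat function of $\mathrm{M}$, because $\Sp(\mathfrak{t})$ may fall outside the age of $\mathrm{M}$, so the homogeneity argument that realized \Kat functions in Theorem~\ref{thm:join3} does not apply directly; one must instead realize an approximating configuration via Theorem~\ref{thm:embedwh} and only afterwards correct the embedding back onto $A$ by homogeneity. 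The remaining points — that $S$ keeps $0$ as a limit, and that Lemma~\ref{lem:unemb5} and Theorem~\ref{thm:embedwh} apply verbatim with $S$ in the role of the distance set — are routine.
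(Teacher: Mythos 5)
Your proposal is correct and follows essentially the same route as the paper: necessity via Theorem~\ref{thm:4nec}, Lemma~\ref{lem:clUry0} and separability; sufficiency via Lemma~\ref{lem:4valdenseR}, Theorem~\ref{thm:basic1}, Theorem~\ref{thm:join3}, Corollary~\ref{cor:embedwh}, and then Lemma~\ref{lem:unemb5} with Theorem~\ref{thm:embedwh}. The only cosmetic difference is at the end: the paper concludes that $\mathrm{M}$ is universal and invokes Corollary~\ref{cor:realint}, whereas you realize restricted type functions directly by embedding $\Sp(\mathfrak{t})$ and correcting by homogeneity --- which is exactly the content of Lemma~\ref{lem:realin4}, so the ``obstacle'' you flag is already absorbed by the paper's existing machinery.
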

\begin{proof}
Let $0\in R\subseteq \Re_{\geq 0}$ with $0$ as a limit. 

If there exists an Urysohn metric space $\UR$  it follows form Theorem \ref{thm:4nec} that $\UR$ satisfies the 4-values condition because Urysohn metric spaces are universal. It follows from Lemma \ref{lem:clUry0} that $R$ is a closed subset of $\Re$. 

Let $R$ be closed and satisfy the 4-values condition. It follows from Lemma \ref{lem:4valdenseR} that $R$ has a countable dense subset $T$ which satisfies the 4-values condition. Let the countable universal metric space $\boldsymbol{U}_{\hskip-2pt T}$ be given by Theorem \ref{thm:basic1}. The completion $\mathrm{M}$ of $\boldsymbol{U}_{\hskip -2pt T}$ is homogeneous and separable and complete  according to Theorem \ref{thm:join3}. It follows from Corollary \ref{cor:embedwh} that $\dist(\mathrm{M})$ is the closure of $T$ which implies because $T$ is dense in $R$ and $R$ is closed that $\dist(\mathrm{M})=R$. According to Lemma~\ref{cor:realint} it remains to prove that $\mathrm{M}$ is universal, that is that every finite metric space $\mathrm{A}\in \mathcal{F}_R$ has an isometric embedding into $\mathrm{M}$. This then indeed follows from Lemma  \ref{lem:unemb5} and Theorem \ref{thm:embedwh}

\vskip 5pt
\noindent
Let $0\in R\subseteq \Re_{\geq 0}$ which does not have $0$ as a limit. 

If $R$ is uncountable then there does not exist an Urysohn metric space $\UR$ because Urysohn metric spaces are separable. If $R$ is countable then there exists universal metric space $\UR$ according to Theorem \ref{thm:basic1}. The space $\UR$ is an Urysohn metric space because the completion of $\UR$ is equal to $\UR$. 
\end{proof}

\section{Examples}

\begin{example}\label{ex:Urysph}{\normalfont
It is not difficult to check that the set of reals  in the intervals $[0,\infty)$ and  $[0,1]$ satisfy the 4-values condition. Hence there exist according to Theorem \ref{thm:haupt} an Urysohn space $\boldsymbol{U}_{\hskip -1pt [0,\infty)}$, the classical {\em Urysohn space} and  $\boldsymbol{U}_{\hskip -1pt[0,1]}$, the {\em Urysohn sphere}. 

}\end{example}

\begin{example}\label{ex:notclosed}{\normalfont The set of distances of the completion of a metric space need not be closed:

Let $R$ be the set of rationals in the interval $[0,1]$ and $V=\{a_i\mid i\in R\}\cup \{b_i\mid i\in R\}$. Let $\mathrm{V}=(V;\de)$ be the metric space with $\de(a_i,b_i)=i$ and $\de(a_i,a_j)=\de(a_i,b_j)=1$ for all $i,j\in R$ with $i\not=j$. The completion of $\mathrm{V}$ is $\mathrm{V}$.}
\end{example}

\begin{example}\label{ex:compluniv}{\normalfont The completion of a universal metric space $\UR$  need not be universal but is homogeneous according to Theorem \ref{thm:join3}. The age of the completion consists of all finite metric spaces which can be ``approximated'' by metric spaces with distances in $R$, Theorem \ref{thm:embedwh} and Lemma \ref{lem:unemb5}.

Let $R$ be the set of rationals in the interval $[0,1)$ together with the number 2. Then $R$ satisfies the 4-values condition because: Let $x\leadsto (a,b,c,d)$ with $x,a,b,c,d\in R$. According to Lemma \ref{lem:verify4} it suffices to assume $a>\max\{b+c,b+d,c+d\}$. If $a\in [0,1)$ then $b+c\leadsto (a,d,c,b)$. If $a=2$ then $b=2$ or $x=2$. If $b=2$ then $2\leadsto (a,d,c,b)$. If $x=2$ then $c=2$ or $d=2$. If $c=2$ then $2\leadsto (a,d,c,b)$ and if $d=2$ then $b+c\leadsto (a,d,c,b)$. 

Hence, according to Theorem \ref{thm:basic1}, there exists a universal countable metric space $\UR$. Let $\mathrm{M}$ be the completion of $\UR$. According to Corollary \ref{cor:embedwh} $\dist(\mathrm{M})=[0,1]\cup \{2\}:=T$, which does not satisfy the 4-values condition because $1\leadsto(2,1,\frac{1}{2},\frac{1}{2})$ but there is no number $y\in T$ with $y\leadsto (2,\frac{1}{2},\frac{1}{2},1)$.  The class $\mathcal{F}_T$ contains a triangle with distance set $\{2,1,1\}$. The class $\mathcal{R}$ does not contain any triangle with distance set of the form $\{a,b,c\}$ with $2-\frac{1}{4}<2<2+\frac{1}{4}$ and $1-\frac{1}{4}<b,c<1+\frac{1}{4}$. Hence $\mathrm{M}$ does not contain a triangle with distance se $\{2,1,1\}$ and is therefore not universal. Theorem \ref{thm:embedwh} characterizes the finite metric spaces in the age of $\mathrm{M}$. 

Indeed, it is not difficult to check that $\mathrm{M}$ consists of countably many copies of the Urysohn space $\boldsymbol{U}_{\hskip -2pt[0,1]}$ for which any two points in different copies have distance 2. 
}\end{example}

\begin{example}\label{ex:compUry}{\normalfont  Some additional examples of subsets $R$ of the reals satisfying the 4-values condition. For examples of finite sets $R$ see \cite{Ng2}.

Let $R$ be the union of the closed intervals 
\[
R=\{0\}\cup \bigcup_{n\in \omega}\left[\frac{1}{2^{2n+1}}, \frac{2^8-1}{2^{2n+4}}\right].
\]
Then $R$ is closed has $0$ as a limit and it is not difficult to check that it satisfies the 4-values condition. Hence there exists an Urysohn metric space $\UR$.

Every sum closed set $R\subseteq \Re$ containing 0 satisfies the 4-values condition.  The intersection of a sum closed set $R\subseteq \Re$ containing 0 with an interval of the form $[0,a]$ or $[0,a)$ satisfies the 4-values condition. 

The set $\omega$ and every initial interval of $\omega$ satisfies the 4-values condition. 

The sets $R=[0,1]\cup [3,4]\cup [9,\infty)$ and $R=[0,1]\cup [3,4]\cup (8,\infty)$ satisfy the 4-values condition. The set $R=[0,1]\cup [3,4]\cup [8,\infty)$ does not satisfy the 4-values condition. 

It is a bit more challenging to prove that the set of  Cantor numbers $C$ with finitely many digits 2 in the ternary expansion satisfy the 4-values condition, manuscript. Hence there exists a unique countable universal metric space $\boldsymbol{U}_{\hskip -2pt C}$ according to Theorem \ref{thm:basic1}.   The set of all  Cantor numbers does not satisfy the 4-values condition but it follows from Theorem \ref{thm:afterth} that there exists separable complete homogeneous metric space $\boldsymbol{U}_{\hskip -2pt \bar{C}}$  whose set of distances is the set of Cantor numbers. It follows from Theorem \ref{thm:Uryuni} that $\boldsymbol{U}_{\hskip -2pt \bar{C}}$ is the unique separable complete metric space whose age is the set of finite metric spaces which can be approximated by finite metric spaces with distances in $C$ as described in Theorem~\ref{thm:embedwh}.

}\end{example}


\begin{thebibliography}{LANVT}



\bibitem{Ury}
P.S. Urysohn, Sur un espace mŽtrique universel, {\em C. R. Acad. Sci. Paris} \textbf{180} (1925), 803Ð806.

\bibitem{Kat}
M. Kat\u{e}tov, On universal metric spaces, in: {\em Gen. Topology and its Relations to Modern Analysis and Algebra VI}, Proc. Sixth Prague Topol. Symp. 1986, Z. Frolik, ed., Heldermann Verlag (1988), 323Ð330.

\bibitem{Uspens}
V.V. Uspenskij, On the group of isometries of the Urysohn universal metric space, {\em Comment. Math. Univ. Carolinae} \textbf{31}(1) (1990), 181Ð182.

\bibitem{Uspens2}
V.V. Uspenskij, On subgroups of minimal topological groups, {\em Topology Appl.} \textbf{155} (2008), 1580Ð1606.

\bibitem{MPest}
B. Mbombo, V. Pestov, Subgroups of isometries of Urysohn-Kat\u{e}tov  metric spaces of uncountable density, {\em arXiv:1012.1056v6 [math.GN]}, 20 May 2011

\bibitem{Meller}
J. Melleray, Topology of the isometry group of the Urysohn space, {\em Fund. Math.} \textbf{207} (2010), 273Ð287.

\bibitem{Fr}
R. Fra\"iss\'e, Theory of relations, Studies in Logic and the
Foundations of Mathematics, 145, North-Holland Publishing Co.,
Amsterdam, 2000.  

\bibitem{KPT}
A. S. Kechris, V. Pestov and S. Todorcevic, Fra\"iss\'{e} limits,
Ramsey theory, and topological dynamics of automorphism groups,
\emph{Geom. Funct. Anal.} \textbf{15} (2005) 106-189.

\bibitem{Mil}
V. D. Milman, A new proof of A. Dvoretzky's theorem on cross-sections of convex bodies, \emph{Funkcional. Anal. i Prilo\v zen.}, 5 (4), 28-37, 1971 (in Russian).

\bibitem{Cl}
J. Clemens, The set of distances in a Polish metric space, accepted to \emph{Fundamenta Mathematicae}.

\bibitem{OS}
E. Odell, T. Schlumprecht, The distortion problem, \emph{Acta Mathematica} \textbf{173} (1994) 259-281.

\bibitem{LNSA}
L. Nguyen Van Th\'e, N. Sauer,  The Urysohn Sphere is oscillation stable,  \emph{Goemetric and Functional Analysis} \textbf{19} Issue 2 (2009)  536-557.

\bibitem{Ng}
L. Nguyen Van Th\'e, Structural Ramsey theory of metric spaces and topological dynamics of isometry groups, \emph{Memoirs of the Amer. Math. Soc.}, 968 (206), 155 pages, 2010.

\bibitem{Pe1}
V. Pestov, Dynamics of infinite-dimensional groups. The Ramsey-Dvoretzky-Milman phenomenon. Revised edition of  Dynamics of infinite-dimensional groups and Ramsey-type phenomena [Inst. Mat. Pura. Apl. (IMPA), Rio de Janeiro, 2005]. University Lecture Series, 40. American Mathematical Society, Providence, RI, 2006. 

\bibitem{SA1}
N. Sauer,   Vertex partitions of metric spaces with finite distance sets, {\em Discrete Mathematics}, accepted for publication in print.

\bibitem{LANVT}
J. Lopez-Abad and L. Nguyen Van Th\'e, The oscillation stability problem for the Urysohn sphere: A combinatorial approach, \emph{Topology Appl.}  \textbf{155} Issue 14  (2008) 1516-1530.

\bibitem{DLPS}
C. Delhomm\'e, C. Laflamme, M. Pouzet, N. Sauer,  Divisibility of countable metric spaces, {\em European Journal of Combinatorics}, Volume \textbf{28} Issue 6 (2007) 1746-1769.

\bibitem{Jonss}
B. J{\' o}nsson,  Homogeneous Universal Relational Systems, {\em Math. Scand.,} \textbf{8} (1960), 137-142.

\bibitem{Barbina}
S. Barbina, D. Zambella, A viewpoint on amalgamation classes, {\em Commentationes Mathematicae Universitatis Carolinae,} \textbf{51} (2010), issue 4, pp. 681-691   (Also: arXiv:1009.1789v1 [math.LO] 9 Sep 2010)


\bibitem{Ng2}
L. Nguyen Van Th\'e, Th\'eorie de Ramsey structurale des espaces m\'etriques et dynamique topologique des groupes d'isom\'etries, Ph.D. Thesis, Universit\'e Paris 7, 2006 (available in English). 



\end{thebibliography}
 \end{document}